\tikzset{commutative diagrams/.cd, arrow style=tikz, diagrams={>=Computer Modern Rightarrow}, path details/.style={every node/.style={midway, sloped, font=\footnotesize}}}
\tikzset{anchorbase/.style={baseline={([yshift=-0.5ex]current bounding box.center)}}, int/.style={thick}, cross line/.style={preaction={draw=white,line width=6pt,-}}, wall/.style={thin,double,blue}, middlearrow/.style={postaction=decorate,decoration={markings,mark=at position .55 with {\arrow{stealth};}}}, middlearrowrev/.style={postaction=decorate,decoration={markings,mark=at position .55 with {\arrowreversed{stealth};}}}, ev/.style={shape=rectangle, draw}}
\theoremstyle{plain}
\newtheorem{satz}{Satz}[section]
\newtheorem{theorem}[satz]{Theorem}
\newtheorem{lemma}[satz]{Lemma}
\newtheorem{proposition}[satz]{Proposition}
\newtheorem{corollary}[satz]{Corollary}
\newtheorem{introtheorem}{Theorem}
\theoremstyle{definition}
\newenvironment{definition}
{\pushQED{\qed}\definitionx}
{\popQED\enddefinitionx}
\newenvironment{example}
{\pushQED{\qed}\examplex}
{\popQED\endexamplex}
\newenvironment{remark}
{\pushQED{\qed}\remarkx}
{\popQED\endremarkx}
\newcommand{\sltwo}{\mathfrak{sl}_2(\mathbb{C})}
\newcommand{\sunrolled}{\overline{U}^H_q(\mathfrak{sl}_2(\mathbb{C}))}
\newcommand{\Bunrolled}{\overline{U}^{H}_q(\mathfrak{b})}
\newcommand{\id}{\mathrm{id}}
\newcommand{\RI}{\text{RI}}
\newcommand{\Hom}{\textnormal{Hom}}
\newcommand{\End}{\textnormal{End}}
\newcommand{\ev}{\mathrm{ev}}
\newcommand{\coev}{\mathrm{coev}}
\newcommand{\C}{\mathbb{C}}
\newcommand{\Z}{\mathbb{Z}}
\newcommand{\Q}{\mathbb{Q}}
\newcommand{\RII}{\text{RII}}
\newcommand{\RIII}{\text{RIII}}
\newcommand{\Span}{\text{span}}
\newcommand{\ch}{\text{ch}}
\newcommand{\qbinom}{\genfrac{[}{]}{0pt}{}}
\newcommand{\qdim}{\textnormal{qdim}}
\newcommand{\ptr}{\mathrm{ptr}}
\newcommand{\lmod}{{\operatorname{-mod}}}
\newcommand{\writhe}{{\operatorname{wr}}}
\newcommand{\Rib}{\mathbf{Rib}}
\newcommand*{\spr}[2]{\left(\,#1\,\middle|\,#2\,\right)}
\title[Renormalized Reshetikhin--Turaev invariants]{Renormalized Reshetikhin--Turaev invariants for the unrolled quantum group of $\sltwo$}
\author[N. Geer]{Nathan Geer}
\address{Department of Mathematics and Statistics \\ Utah State University\\
	Logan, Utah 84322 \\ USA}
\email{nathan.geer@gmail.com}
\author[A. Robertson]{Adam Robertson}
\email{adam.robertson@usu.edu}
\author[J.-L. Spellmann]{Jan-Luca Spellmann}
\email{jspellmann@outlook.de}
\author[M. B. Young]{Matthew B. Young}
\email{matthew.young@usu.edu}
\begin{document}

\date{\today}
\keywords{Reshetikhin--Turaev theory. Representation theory of quantum groups.}
\subjclass[2010]{Primary: 81R50; Secondary 57M25.}

\begin{abstract}
    This paper is a self-contained introduction to the theory of renormalized Reshetikhin--Turaev invariants of links defined by Geer, Patureau-Mirand and Turaev. Whereas the standard Reshetikhin--Turaev theory of a $\C$-linear ribbon category assigns the trivial invariant to any link with a component colored by a simple object of vanishing quantum dimension, the renormalized theory does not. We give a streamlined development of the renormalized Reshetikhin--Turaev theory of links for the category of weight modules over the restricted unrolled quantum group of $\sltwo$ at an even root of unity.
\end{abstract}
	
\maketitle
\setcounter{tocdepth}{1}
\tableofcontents
    
\section*{Introduction}\label{intro}
\subsection*{Background and motivation}

Let $L \subset S^3$ be a framed link and $M_L$ the closed orientable $3$-manifold obtained from $S^3$ by surgery along $L$. By a theorem of Lickorish and Wallace, any closed connected orientable 3-manifold arises in this way \cite{wallace1960,lickorish1962}. Moreover, the $3$-manifolds $M_L$ and $M_{L^{\prime}}$ are homeomorphic if and only if the framed links $L$ and $L^{\prime}$ are related by a finite sequence of Kirby moves \cite{kirby1978calculus}. These results are the starting point for a knot theoretic approach to problems and constructions in $3$-manifold topology. For example, it follows from the previous two results that an isotopy invariant of framed links which is also invariant under Kirby moves defines an invariant of $3$-manifolds, thereby emphasizing the topological importance of link invariants.

Reshetikhin and Turaev constructed a large class of link invariants using the theory of ribbon categories \cite{reshetikhin1990ribbon}. Associated to each ribbon category $\mathcal{D}$ is a ribbon functor $F_{\mathcal{D}} : \Rib_{\mathcal{D}} \rightarrow \mathcal{D}$ with domain the category of $\mathcal{D}$-colored ribbon graphs. Interpreting an isotopy class of a $\mathcal{D}$-colored framed link $L$ as a $(0,0)$-tangle, and so an endomorphism of the unit object $\mathbb{I} \in \Rib_{\mathcal{D}}$, produces an invariant $F_{\mathcal{D}}(L) \in \End_{\mathcal{D}}(\mathbb{I})$ of $L$, the \emph{Reshetikhin--Turaev invariant}. The invariant $F_{\mathcal{D}}(L)$ is computed as follows. Choose a regular diagram $D$ for $L$. Decompose $D$ into elementary pieces consisting of cups, caps, simple crossings and twists and assign to these pieces the corresponding coevaluations, evaluations, braidings and twists, respectively, of $\mathcal{D}$. The composition of these morphisms in $\mathcal{D}$ is $F_{\mathcal{D}}(L)$. \cref{fig:RTHopf} illustrates this procedure for the Hopf link.

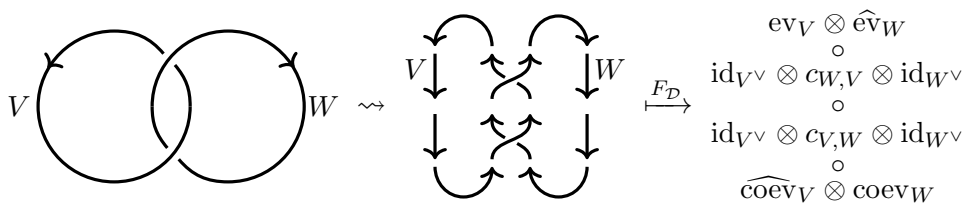
\begin{figure}
\centering
\begin{equation*}
    \begin{aligned}
    \begin{tikzpicture}[anchorbase]
        \draw [very thick] (-.25,0) to [out=270,in=130] (-.05,-.55);
        \draw [very thick, postaction={decorate}, decoration={markings, mark=at position .55 with {\arrow{<}}}] (.1,-.7) to [out=310,in=180] (.75,-1) to [out=0,in=270] (1.75,0) to [out=90,in=0] (.75,1) to [out=180,in=90] (-.25,0);
        \draw [very thick, , postaction={decorate}, decoration={markings, mark=at position .7 with {\arrow{<}}}] (.25,0) to [out=270,in=0] (-.75,-1) to [out=180,in=270] (-1.75,0) to [out=90,in=180] (-.75,1) to [out=0,in=130](-.1,.7);
        \draw [very thick ] (.05,.55) to [out=310,in=90] (.25,0);
        \node at (-2,0) {$V$};
        \node at (2,0) {$W$};
    \end{tikzpicture}
    \rightsquigarrow 
    \begin{tikzpicture}[anchorbase]
        \draw [->,very thick] (-.25,.1) to [out=90,in=270] (.25,.6) to (.25,.7);
        \draw [very thick] (.25,.1) to [out=90,in=330] (.08,.31);
        \draw [->,very thick] (-.08,.385) to [out=150,in=270] (-.25,.6) to (-.25,.7);
        \draw [<-,very thick] (.25,-.1) to (.25,-.2) to [out=270,in=90] (-.25,-.7);
        \draw [very thick] (.25,-.7) to [out=90,in=330] (.08,-.49);
        \draw [->,very thick] (-.08,-.415) to [out=150,in=270] (-.25,-.2) to (-.25,-.1);
        \draw [<-,very thick] (-1,.8) to [out=90,in=180] (-.625,1.2) to [out=0,in=90] (-.25,.8);
        \draw [<-,very thick] (1,.8) to [out=90,in=0] (.625,1.2) to [out=180,in=90] (.25,.8);
        \draw [->,very thick] (-1,-.8) to [out=270,in=180] (-.625,-1.2) to [out=0,in=270] (-.25,-.8);
        \draw [->,very thick] (1,-.8) to [out=270,in=0] (.625,-1.2) to [out=180,in=270] (.25,-.8);
        \draw [->,very thick] (-1,.7) to (-1,.1);
        \draw [<-,very thick] (-1,-.7) to (-1,-.1);
        \draw [->,very thick] (1,.7) to (1,.1);
        \draw [<-,very thick] (1,-.7) to (1,-.1);
        \node at (-1.25,.5) {$V$};
        \node at (1.3,.5) {$W$};
    \end{tikzpicture}
    \xmapsto{F_\mathcal{D}}
    \begin{tikzpicture} [anchorbase]
        \node at (0,1.1) {$\ev_V \otimes \widehat{\ev}_W$};
        \node at (0,.4) {$\id_{V^\vee} \otimes c_{W,V} \otimes \id_{W^\vee}$};
        \node at (0,-.4) {$\id_{V^\vee} \otimes c_{V,W} \otimes \id_{W^\vee}$};
        \node at (0,-1.1) {$\widehat{\coev}_V \otimes \coev_W$};
        \node at (0,-.8) {$\circ$};
        \node at (0,0) {$\circ$};
        \node at (0,.73) {$\circ$};
    \end{tikzpicture}
    \end{aligned}
\end{equation*}
\caption{The Reshetikhin--Turaev invariant of a Hopf link colored by objects $V$ and $W$ of $\mathcal{D}$. Composition in $\mathcal{D}$ is read from bottom to top.}
\label{fig:RTHopf}
\end{figure}
	
The Reshetikhin--Turaev construction highlights the topological significance of ribbon categories. Classical representation theory produces many examples of symmetric monoidal categories: representations of groups and Lie algebras and, more generally, cocommutative Hopf algebras. Unfortunately, Reshetikhin--Turaev invariants associated to a symmetric monoidal category are uninteresting since they retain information only about the number of components of a link. On the other hand, categories of representations of quantum groups and, more generally, quasi-triangular Hopf algebras famously give rise to (non-symmetric) ribbon categories \cite{jimbo1985aq,drinfeld1986quantum,drinfeld1990hopf,chari1994}. The resulting quantum invariants of links, which include the Jones and HOMFLYPT polynomials, are at the foundation of quantum topology \cite{jones1987,freyd1985,przytycki1988,reshetikhin1990ribbon,turaev2016quantum}.

Many ribbon categories arising in representation theory have the following properties:
\begin{enumerate}[label=(P\arabic*)]
    \item \label{ite:vanQD} The category has simple objects with vanishing quantum dimension.
	\item \label{ite:nonss} The category is non-semisimple, that is, not every short exact sequence splits.
	\item \label{ite:infSimp} The category has infinitely many non-isomorphic simple objects.
\end{enumerate}
For example, the category $U_q(\mathfrak{g}) \lmod$ of finite dimensional representations of the quantum group $U_q(\mathfrak{g})$ associated to a complex simple Lie (super)algebra $\mathfrak{g}$ at a root of unity has Properties \ref{ite:vanQD}-\ref{ite:infSimp}. It is well-known that the Reshetikhin--Turaev invariant of a link colored by a simple object of vanishing quantum dimension is zero. For this reason, the Reshetikhin--Turaev construction is not well-suited to extracting the full topological content of categories having Property \ref{ite:vanQD}. Properties \ref{ite:nonss} and \ref{ite:infSimp} do not cause problems for Reshetikhin--Turaev invariants of links but are serious obstructions to extending these invariants to $3$-manifolds. For example, these properties obstruct the definition of the Kirby color, a weighted sum of isomorphism classes of simple objects, which is crucial to the construction of $3$-manifold invariants in \cite{reshetikhin1991invariants}.

A standard approach to simultaneously eliminating Properties \ref{ite:vanQD}-\ref{ite:infSimp} for the category $U_q(\mathfrak{g}) \lmod$, with $\mathfrak{g}$ a simple Lie algebra, is semisimplification \cite{andersen1992}, whereby simple objects of vanishing quantum dimension are formally set to zero. The semisimpflied categories are, for particular roots of unity, modular tensor categories. The resulting $3$-manifold invariants comprise the top level of a three dimensional topological quantum field theory which is a mathematical model for Chern--Simons theory with gauge group the simply connected compact Lie group associated to $\mathfrak{g}$ \cite{witten1989,reshetikhin1991invariants}. On the other hand, for the category $ U_q(\mathfrak{g}) \lmod$, with $\mathfrak{g}$ a type I Lie superalgebra, typical representations have vanishing quantum dimension and semisimplification eliminates most interesting content of the category.
	
Ribbon categories with Properties \ref{ite:vanQD}-\ref{ite:infSimp} also arise in quantum field theory. For example, such categories arise as line operators in Chern--Simons theories with non-compact gauge groups \cite{witten1991,barnatan1991,rozansky1994,mikhaylov2015} and topological twists of supersymmetric quantum field theories \cite{kapustin2009b,creutzig2021} and as modules for vertex operator algebras in non-rational (or logarithmic) conformal field theories \cite{rozansky1993,creutzig2013,creutzig2013b}.

Early examples of knot invariants constructed from ribbon categories with Properties \ref{ite:vanQD}-\ref{ite:infSimp} include the work of Akutsu, Deguchi and Ohtsuki \cite{akutsu1992} and Murakami and Murakami \cite{murakami2001}, who defined (framed) link invariants from typical representations of the unrolled quantum group $\sunrolled$ at an even root of unity. A systematic program to define and study quantum invariants from ribbon categories with Properties \ref{ite:vanQD}-\ref{ite:infSimp}, called \emph{renormalized Reshetikhin--Turaev theory}, was developed by Blanchet, Costantino, Geer, Patureau-Mirand and Turaev \cite{BCGP,geer_2009,costantino2014quantum}. In the setting of links, these renormalized invariants provide non-trivial invariants of links colored by objects with vanishing quantum dimension. The goal of this paper is to give a self-contained introduction to the theory of renormalized Reshetikhin--Turaev invariants of links in the simplest case of the category of modules over $\sunrolled$, following \cite{geer_2009,costantino2015}. While this paper contains no new results, we do offer a number of new proofs of known results and give complete details where they are often not available in the literature. Some familiarity with the representation theory of $U_q(\sltwo)$, at the level of \cite{jantzen}, and its associated Reshetikhin--Turaev invariants would be beneficial, but is not strictly necessary. We assume basic knowledge of Hopf algebras and monoidal categories.

\subsection*{Contents of this paper}

Fix an integer $r \geq 2$ and set $q= e^{\frac{\pi \sqrt{-1}}{r}}$. The De Concini--Kac quantum group $U_q(\sltwo)$ has generators $K^{\pm 1}$, $E$ and $F$ with relations $KK^{-1}=1=K^{-1}K$ and
\begin{equation}
\label{eq:DCKReln}
KE = q^2EK, \qquad KF = q^{-2}FK, \qquad EF - FE = \frac{K - K^{-1}}{q - q^{-1}}.
\end{equation}
The \emph{unrolled quantum group} $U_q^H(\sltwo)$, as introduced in \cite{geer_2009}, is defined similarly to the De Concini--Kac quantum group but with an additional generator $H$, thought of as a logarithm of $K$, which commutes with $K$ and satisfies the classical limit of the first two relations \eqref{eq:DCKReln}:
\[
[H,E] = 2E, \qquad [H,F]=-2F.
\]
The algebra of primary interest in this paper is the \emph{restricted unrolled quantum group} $\sunrolled$, defined to be the quotient of $U^H_q(\sltwo)$ by the relations $E^r = F^r = 0$. A $\sunrolled$-module $V$ is called a \emph{weight module} if it is a direct sum of $H$-eigenspaces and $K=q^H$ as operators on $V$. The category $\mathcal{C}$ of finite dimensional weight modules over $\sunrolled$ is the central algebraic object of this paper.
	
\cref{sunrolled} is devoted to a detailed study of $\mathcal{C}$. A natural Hopf algebra structure on $\sunrolled$ gives $\mathcal{C}$ the structure of a rigid monoidal abelian category. We use Verma modules, which are finite dimensional due to the relations $E^r=F^r=0$, to classify simple objects of $\mathcal{C}$ in \cref{simplemodules}. The result is that there is a discrete family of simple modules $S^{lr}_n$ of highest weight $n + lr$ and dimension $n+1$, $l \in \Z$, $0 \leq n \leq r-2$, and a continuous family of simple Verma modules $V_{\alpha}$ of highest weight $\alpha + r -1$, $\alpha \in \C \setminus \Z \cup r \Z$, and dimension $r$.
    
Tracking $H$-weights modulo $2 \Z$ defines a $\C \slash 2 \Z$-grading $\mathcal{C} = \bigoplus_{\overline{\alpha} \in \C \slash 2 \Z} \mathcal{C}_{\overline{\alpha}}$ which is compatible with the rigid monoidal structure. While the category $\mathcal{C}$ is not semisimple, it is \emph{generically semisimple} in the sense that most homogeneous subcategories $\mathcal{C}_{\overline{\alpha}} \subset \mathcal{C}$ are semisimple. More precisely, we prove in \cref{Cisgss} that $\mathcal{C}_{\overline{\alpha}}$ is semisimple unless $\overline{\alpha} \in \Z \slash 2 \Z$. \cref{ribbon} states that $\mathcal{C}$ is braided. A complete proof of this statement does not seem to be in the literature. The proof we present is elementary and self-contained. The form of the braiding is motivated by the well-known universal $R$-matrix of the $\hbar$-adic quantum group of $\sltwo$ \cite{drinfeld1986quantum,ohtsuki2002quantum}. In \cref{ribbon theorem} we prove that $\mathcal{C}$ is ribbon. The candidate ribbon structure is based on the twist associated to the rigid monoidal structure, namely the right partial trace of the braiding. We use generic semisimplicity of $\mathcal{C}$ to prove that this twist is compatible with duality by checking that this is so generically and concluding, via a general result of \cite{geer_2017}, that this extends to the entirety of $\mathcal{C}$. The results of \cref{sunrolled} can be summarized as follows.

\begin{introtheorem}
    The category $\mathcal{C}$ is a $\mathbb{C}/2\Z$-graded generically semisimple ribbon category.
\end{introtheorem}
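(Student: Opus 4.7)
The plan is to establish the four constituent properties of $\mathcal{C}$ — rigid monoidal abelian structure, $\C/2\Z$-grading, generic semisimplicity, and ribbon structure — in sequence, building each on the previous.

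I would begin with the abelian and rigid monoidal structure, both of which follow mechanically from the Hopf algebra structure of $\sunrolled$ once one checks that the comultiplication preserves the weight-module property (tensor products of weight modules are weight modules) and that the antipode sends finite-dimensional weight modules to finite-dimensional weight modules, so that duals exist in $\mathcal{C}$. The $\C/2\Z$-grading is then immediate: since $\Delta(H) = H \otimes 1 + 1 \otimes H$, the $H$-weights add under tensor product, so the full subcategories $\mathcal{C}_{\overline{\alpha}}$ consisting of weight modules whose weights all lie in the coset $\overline{\alpha}$ partition $\mathcal{C}$ compatibly with tensor product and duals.

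For generic semisimplicity I would use the classification established earlier in \cref{simplemodules}: the only simple objects in $\mathcal{C}_{\overline{\alpha}}$ for $\overline{\alpha} \notin \Z/2\Z$ are simple Verma modules $V_\beta$ with $\beta + r - 1 \in \overline{\alpha}$. The goal then reduces to showing that every object of such a component decomposes as a direct sum of Verma modules. An induction on dimension should work: given $V \in \mathcal{C}_{\overline{\alpha}}$, one picks a highest $H$-weight vector $v_0$, notes that its cyclic submodule is isomorphic to a simple Verma module $V_\beta$ with $\beta \notin \Z \cup r\Z$, and then splits it off as a direct summand using that $V_\beta$ is both projective and injective in $\mathcal{C}_{\overline{\alpha}}$ — this last point follows from an explicit construction of splittings using the actions of $E$ and $F$, exploiting that the operator $EF$ has non-vanishing eigenvalues on generic weight spaces.

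The braided structure is the technical heart. I would write down an $R$-matrix of the form $\mathcal{R} = \mathcal{H} \cdot \overline{R}$, where $\mathcal{H}$ acts as $q^{H \otimes H / 2}$ on weight modules (well-defined since $H$ acts diagonally) and $\overline{R} = \sum_{k=0}^{r-1} \frac{(q-q^{-1})^k}{[k]!} q^{k(k-1)/2}\, E^k \otimes F^k$ is the nilpotent truncation, modelled on the $\hbar$-adic $R$-matrix of $\uq$. The sum is finite because $E^r = F^r = 0$. Setting $c_{V,W} = \tau \circ \mathcal{R}_{V,W}$ with $\tau$ the flip, I would verify the hexagon and naturality axioms by direct manipulation of the defining relations. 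The main obstacle will be the ribbon structure. Following the rigid strategy, I would take $\theta_V$ to be the right partial trace of $c_{V,V}$; this is automatically a twist compatible with the monoidal structure, but the condition $\theta_{V^\vee} = \theta_V^\vee$ is awkward to verify on non-semisimple blocks. The key idea is to bypass this: verify $\theta_{V^\vee} = \theta_V^\vee$ only for simple objects of the semisimple components $\mathcal{C}_{\overline{\alpha}}$, $\overline{\alpha} \notin \Z/2\Z$, where both sides can be computed by explicit highest-weight calculations on Verma modules, and then invoke the general extension criterion of \cite{geer_2017} to propagate the identity to the whole of $\mathcal{C}$. Identifying the correct generically dense subcategory and checking the hypotheses of that criterion is what makes this last step delicate; the rest of the argument is largely bookkeeping once the $R$-matrix has been introduced.
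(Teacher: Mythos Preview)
Your proposal is correct and follows essentially the same route as the paper: the grading via $H$-weights modulo $2\Z$, generic semisimplicity by splitting off injective Verma submodules and inducting on dimension, the braiding from the truncated $\hbar$-adic $R$-matrix $q^{H\otimes H/2}\exp_q^<(\{1\}E\otimes F)$, and the ribbon structure obtained by taking $\theta_V=\ptr_R(c_{V,V})$, checking $\theta_{V^\vee}=\theta_V^\vee$ only on generic simple Verma modules, and extending via the criterion of \cite{geer_2017}. The paper's argument differs only in minor details of presentation.
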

	
In \cref{rti} we recall standard material related to the Reshetikhin--Turaev functor $F_{\mathcal{D}}: \mathbf{Rib}_\mathcal{D} \rightarrow \mathcal{D}$ associated to a ribbon category $\mathcal{D}$. Central to the renormalized theory is the well-known statement, proved in this paper as \cref{cutting}, that if $V \in \mathcal{D}$ is a simple object of a $\mathbb{C}$-linear ribbon category, $L$ is a $\mathcal{D}$-colored link and $T$ is a $(1,1)$-tangle with closure $L$ and open strand colored by $V$, then
\begin{equation*}
	F_\mathcal{D}(L) = \qdim_{\mathcal{D}}(V) F_\mathcal{D}(T).
\end{equation*}
Here both sides of the equation are identified with the scalar by which they act. In particular, if $\qdim_{\mathcal{D}}(V)=0$, then $F_\mathcal{D}(L)$ vanishes, while $F_\mathcal{D}(T)$ need not. We prove in \cref{modified_invariant} that it, if $L$ is a knot, then $F_\mathcal{D}(T)$ is an invariant of $L$.
	
In \cref{mqi} we extend the invariant $L \mapsto F_{\mathcal{D}}(T)$ from framed knots to framed links. To clarify the exposition, we restrict attention to $\mathcal{D}=\mathcal{C}$, the category of weight $\sunrolled$-modules. The main obstacle in this extension is that cutting a link $L$ with multiple components produces a $(1,1$)-tangle whose isotopy type depends on the component which is cut. Ambidextrous modules are the key to overcoming this obstacle. A simple $\sunrolled$-module $V$ is called \emph{ambidextrous} if the equality
\begin{equation*}
	\begin{aligned}
		F_\mathcal{C}\left(
	    \begin{tikzpicture}[anchorbase,scale=0.8]
			\draw [very thick] (-.5,-.25) -- (.5,-.25) -- (.5,.25) -- (-.5,.25) -- cycle;
			\draw [<-, very thick] (-1.25,0) to [out=90, in=180] (-.75,1) to [out=0,in = 90] (-.25,.25);
			\draw [very thick] (-1.25,0) to [out=270, in=180] (-.75,-1) to [out=0,in=270] (-.25,-.25);
			\draw [->, very thick] (.25,.25) to [out=90, in=230] (.5,1);
			\draw [very thick] (.25,-.25) to [out=270, in=130] (.5,-1);
			\node at (-1.7,0) {$V$};
			\node at (.8,-.9) {$V$};
			\node at (0,0) {$T$};
		\end{tikzpicture}
		\right)
		&=F_\mathcal{C}\left(
		\begin{tikzpicture}[anchorbase,scale=0.8]
			\draw [very thick] (-.5,-.25) -- (.5,-.25) -- (.5,.25) -- (-.5,.25) -- cycle;
			\draw [->, very thick] (-.25,.25) to [out=90, in=310] (-.5,1);
			\draw [very thick] (-.25,-.25) to [out=270, in=50] (-.5,-1);
			\draw [<-, very thick] (1.25,0) to [out=90, in=0] (.75,1) to [out=180,in = 90] (.25,.25);
			\draw [very thick] (1.25,0) to [out=270, in=00] (.75,-1) to [out=180,in=270] (.25,-.25);
			\node at (1.7,0) {$V$};
			\node at (-.8,-.9) {$V$};
			\node at (0,0) {$T$};
		\end{tikzpicture}
		\right)
	\end{aligned}
\end{equation*}
of endomorphisms of $V$ holds for all (2,2)-tangles $T$ whose open strands are colored by $V$. We prove in \cref{manyAmbi} that all simple $\sunrolled$-modules are ambidextrous.
	
Define a function $S': \mathbb{C} \times \mathbb{C} \rightarrow \mathbb{C}$ by
\begin{equation*}
	\begin{aligned}
	    S'(\beta, \alpha) = \,	F_\mathcal{C}\left(
	    \begin{tikzpicture}[anchorbase,scale=0.8]
			\draw [very thick, postaction={decorate}, decoration={markings, mark=at position 0.4 with {\arrow{>}}}] (-.1,-.5) to [out=180,in=270] (-.8,0) to [out=90,in=180] (0,.5) to [out=0,in=90] (.8,0) to [out=270,in=0] (.1,-.5);
			\draw [very thick] (0,-1) to (0,.4);
			\draw [->, very thick] (0,.6) to (0,1);
			\node at (1.2,0) {$V_\beta$};
			\node at (.45,-.9) {$V_\alpha$};
		\end{tikzpicture}
		\right) \in \End_{\mathcal{C}}(V_{\alpha}) \simeq \mathbb{C}.
	\end{aligned}
\end{equation*}
For a fixed ambidextrous module $V_\eta$, the \emph{modified quantum dimension} of a simple Verma module $V_{\alpha}$ is defined to be $\mathbf{d}_\eta(\alpha) = \frac{S'(\alpha, \eta)}{S'(\eta, \alpha)}$. The main result of this paper can be stated as follows.
	
\begin{introtheorem}\textup{(\cref{invaraiant})}
    Let $V_\eta \in \mathcal{C}$ be an ambidextrous module and $L$ a framed link with at least one strand colored by $V_\alpha$ for some $\alpha \in \C \setminus \Z \cup r \Z$. Then the assignment
		\begin{equation*}
			L \mapsto F_\eta'(L) := \mathbf{d}_\eta(\alpha) F_\mathcal{C}(T),
		\end{equation*}
	where $T$ is a $(1,1)$-tangle whose closure is $L$ and whose open strand is colored by $V_{\alpha}$, is a well-defined isotopy invariant of framed colored links.
\end{introtheorem}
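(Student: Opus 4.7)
The strategy is to reduce isotopy invariance to two independence properties of the assignment $L \mapsto \mathbf{d}_\eta(\alpha) F_\mathcal{C}(T)$: independence of where the cut is placed on a fixed $V_\alpha$-colored component, and independence of which Verma-colored component is cut. Once both are verified, isotopy invariance of $F_\eta'$ is automatic, since $F_\mathcal{C}$ is already an isotopy invariant of any resulting $(1,1)$-tangle and an ambient isotopy of $L$ induces one of $T$ (possibly followed by repositioning the cut, absorbed by the first independence property).

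For the first property, two cut points on the same Verma-colored component differ by a sequence of local slides of the cut past an adjacent strand or crossing. Each such slide matches the picture in the ambidexterity equation: the two sides correspond to the cut sitting on the right versus the left of the neighboring strand. Since every simple $\sunrolled$-module is ambidextrous by \cref{manyAmbi}, and in particular $V_\alpha$ is, the equality $F_\mathcal{C}(T) = F_\mathcal{C}(T')$ follows.

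The main obstacle is the second property: if $L$ has distinct Verma-colored components yielding $(1,1)$-tangles $T_\alpha$ and $T_\beta$, we require
\begin{equation*}
  \mathbf{d}_\eta(\alpha)\, F_\mathcal{C}(T_\alpha) \;=\; \mathbf{d}_\eta(\beta)\, F_\mathcal{C}(T_\beta).
\end{equation*}
The plan is to cut both components simultaneously to produce a $(2,2)$-tangle $\widetilde{T}$ with open strands colored by $V_\alpha$ and $V_\beta$, so that $F_\mathcal{C}(T_\alpha)\id_{V_\alpha}$ and $F_\mathcal{C}(T_\beta)\id_{V_\beta}$ are the right partial traces of $F_\mathcal{C}(\widetilde{T})$ over $V_\beta$ and over $V_\alpha$, respectively. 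To compare them, adjoin an auxiliary unknotted $V_\eta$-colored circle encircling a designated open strand of $\widetilde{T}$. By the definition of $S'$, encircling the $V_\gamma$-strand and then taking the partial trace over that strand multiplies the scalar contribution by $S'(\eta,\gamma)$. Sliding the $V_\eta$-circle from the $V_\alpha$-strand to the $V_\beta$-strand across $\widetilde T$, justified because $V_\eta$ is ambidextrous (hypothesis) and all simples are ambidextrous by \cref{manyAmbi}, produces two expressions for the same ``probed'' scalar. Cancelling common factors and using the definition $\mathbf{d}_\eta(\gamma) = S'(\gamma,\eta)/S'(\eta,\gamma)$ yields the required equality.

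The delicate point is the sliding argument: one must arrange the encircling circle so that it can be moved unambiguously across the body of $\widetilde{T}$ from one open strand to the other without altering the underlying ribbon-graph isotopy class, using the ambidexterity of each simple module it meets along the way. The generic semisimplicity of $\mathcal{C}$ reduces potentially awkward intermediate colors to the well-understood ambidextrous case, so that the slide is available at every step.
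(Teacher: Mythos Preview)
Your overall strategy matches the paper's: reduce well-definedness to (a) independence of the cut location on a fixed component and (b) independence of which Verma-colored component is cut, then inherit isotopy invariance from $F_{\mathcal{C}}$.

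For (a), however, your appeal to ambidexterity is misplaced. The ambidexterity identity \eqref{ambiequation} compares the left and right partial closures of a $(2,2)$-tangle whose \emph{two} open strands carry the \emph{same} color $V$; it does not model ``sliding a cut past an adjacent strand'', especially when that strand carries a different color. The paper handles (a) without any algebraic input: two $(1,1)$-tangles obtained by cutting the same component of $L$ at different points are isotopic as framed $(1,1)$-tangles (the topological fact behind \cref{modified_invariant}), so $F_{\mathcal{C}}$ already gives them the same value.

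For (b), your idea of probing with an auxiliary $V_\eta$-circle and comparing via $S'$ is exactly the mechanism behind \cref{invariant}, but the execution is off. You describe sliding the $V_\eta$-circle across $\widetilde{T}$ strand by strand, ``using the ambidexterity of each simple module it meets along the way''; this is neither necessary nor clearly well-posed. The paper's argument is a single application of the ambidexterity of $V_\eta$: one forms a $(2,2)$-tangle in which both open strands are colored $V_\eta$ (with the $V_\alpha$- and $V_\beta$-strands of $T$ closed, each encircled once by $V_\eta$) and then swaps which $V_\eta$ end is closed. Invoking \cref{manyAmbi} inside this step is superfluous, and if you are simultaneously re-deriving the content of \cref{invariant} it is circular, since \cref{manyAmbi} is a corollary of \cref{invariant}. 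Likewise, generic semisimplicity plays no role at this stage; it was used earlier only to exhibit \emph{some} ambidextrous $V_\eta$.
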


In \cref{sec:examples} we discuss some basic properties of the renormalized invariant $F'_\eta$, such as its behavior under connect sum and its associated skein relations, and compute some basic examples. We also show that renormalizations with respect to different ambidextrous modules $V_{\eta}$ lead to invariants which differ by a global scalar.
	
Finally, in \cref{sec:furtherReading} we present a brief guide to further mathematical and physical applications of renormalized Reshetikhin--Turaev invariants of links.
    
\subsection*{Conventions} 
The ground field is $\mathbb{C}$. Write $\otimes$ for $\otimes_{\C}$. All modules are left modules and finite dimensional over $\C$. Any categorical notion regarding monoidal categories is in congruence with \cite{etingof2016tensor}. Given a scalar endomorphism $e$ of a vector space $V$, define $\langle e \rangle \in \C$ by $e = \langle e \rangle \cdot \id_V$.
		
\subsection*{Acknowledgements}
N.G.\ is partially supported by NSF grants DMS-1664387 and DMS-2104497. M.B.Y. is partially supported by a Simons Foundation Collaboration Grant for Mathematicians (Award ID 853541).

\addtocontents{toc}{\protect\setcounter{tocdepth}{2}}

\section{The unrolled quantum group $\sunrolled$ and its weight modules} \label{sunrolled}

Fix an integer $r \geq 2$. Set $q = e^{\frac{\pi \sqrt{-1}}{r}}$.
For $z \in \mathbb{C}$, define
\[
q^z = e^{\frac{\pi \sqrt{-1} z}{r}}, \qquad \{z\} = q^z - q^{-z}, \qquad [z] = \frac{\{z\}}{\{1\}}.
\]
Set $\{0\}!=1$ and $\{n\}! = \prod_{i=1}^n \{i\}$ for $n \in \Z_{> 0}$, and similarly for $[n]!$. For $0 \leq k \leq l$, set $\qbinom{l}{k} = \frac{[l]!}{[k]![l-k]!}$.

\subsection{The unrolled quantum group of $\sltwo$}

We recall the definition of the unrolled quantum group of $\sltwo$, as introduced in \cite{geer_2009,costantino2015}. Pre-cursors of the unrolled quantum group appear in work of Ohtuski \cite{ohtsuki2002quantum}.
	
\begin{definition} \label{unrolled} 
The \emph{unrolled quantum group of $\mathfrak{sl}_2(\C)$} is the unital associative algebra $U^H_q(\sltwo)$ generated by $K$, $K^{-1}$, $H$, $E$ and $F$ with relations
\[
KK^{-1} = K^{-1}K = 1, \qquad HK = KH,
\]
\[
HE - EH = 2E,
\qquad
HF - FH = -2F,
\]
\[
KE = q^2EK,
\qquad
KF = q^{-2}FK,
\]
\[
EF - FE = \frac{K - K^{-1}}{q - q^{-1}}.
\]
The \emph{restricted unrolled quantum group} $\sunrolled$ is the quotient of $U^H_q (\sltwo)$ by the relations $E^r = F^r = 0$.
\end{definition}
	
Informally, the generator $H$ should be viewed as a logarithm of $K$. While this constraint is not imposed at the level of algebras, it is imposed on the modules of interest in this paper. See \cref{sec:weightMod} below.
		
Both $U^H_q (\sltwo)$ and $\sunrolled$ are Hopf algebras with coproduct, counit and antipode defined by
\begin{center}
	\setlength{\tabcolsep}{25pt}
	\begin{tabular}{l l l}
		$\Delta(E) = 1\otimes E + E\otimes K$, & $\varepsilon(E) = 0$, & $S(E) = -EK^{-1}$,\\
		$\Delta(F) = F\otimes 1 + K^{-1}\otimes F$, & $\varepsilon(F) = 0$, & $S(F) = -KF$, \\
		$\Delta(K) = K\otimes K$, & $\varepsilon(K) = 1$, & $S(K) = K^{-1}$, \\
		$\Delta(H) = H\otimes 1 + 1\otimes H$, & $\varepsilon(H) = 0$, & $S(H) = -H$.
	\end{tabular}
\end{center}
	
The De Concini--Kac quantum group $U_q(\sltwo)$ is isomorphic to the Hopf subalgebra of $U^H_q (\sltwo)$ generated by $E,F$ and $K^{\pm 1}$. Similarly, the restricted quantum group $\overline{U}_q(\sltwo)$ is isomorphic to the Hopf subalgebra of $\sunrolled$ generated by $E$, $F$ and $K^{\pm 1}$. The algebra $\sunrolled$ shares many properties with $\overline{U}_q(\sltwo)$. For example, $\sunrolled$ has a Poincar\'{e}--Birkhoff--Witt basis
\[
\{F^a H^b K^c E^d \mid 0 \leq a, d \leq r-1, \; b \in \Z_{\geq 0}, \; c \in \Z \}
\]
and admits a triangular decomposition
\[
\overline{U}^{H,-}_q(\sltwo) \otimes \overline{U}^{H,0}_q(\sltwo) \otimes \overline{U}^{H,+}_q(\sltwo) \xrightarrow[]{\sim} \sunrolled
\]
where $\overline{U}^{H,-}_q(\sltwo)$, $\overline{U}^{H,0}_q(\sltwo)$ and $\overline{U}^{H,+}_q(\sltwo)$ are the subalgebras of $\sunrolled$ generated by $F$, $H$ and $K^{\pm 1}$ and $E$, respectively. For later use, let $\Bunrolled$ be the Hopf subalgebra of $\sunrolled$ generated by $E$, $K^{\pm 1}$ and $H$. 

\subsection{Weight modules}\label{sec:weightMod}

Recall that all modules are assumed to be finite dimensional.
\begin{definition}\label{def:weightModule}
Let $V$ be a $\sunrolled$-module.
    \begin{enumerate}
        \item A \emph{weight vector of weight $\lambda \in \mathbb{C}$} is a non-zero vector $v \in V$ which satisfies $Hv = \lambda v$. If, moreover, $Ev=0$, then $v$ is called a \emph{highest weight vector}. The subspace $V[\lambda] = \{v \in V \mid Hv = \lambda v\}$ is called the \emph{weight space of weight $\lambda$}.
        \item \label{ite:KasH} The module $V$ is called a \emph{weight module} if it is the direct sum of its weight spaces, $V = \bigoplus_{\lambda \in \mathbb{C}} V[\lambda]$, and $Kv = q^{\lambda}v$ for all $v \in V[\lambda]$.
        \item The module $V$ is called a \emph{highest weight module} if it is generated by a highest weight vector. \qedhere
    \end{enumerate}
\end{definition}
	
All $\sunrolled$-modules considered in this paper are assumed to be weight modules. The second condition in \cref{def:weightModule}(\ref{ite:KasH}) can be written as the equality as operators $K = q^H$ on $V$. In view of this, when speaking of weight modules we often give the action of $H$ and omit that of $K$. Finally, note that a highest weight module is necessarily a weight module. 
	
Let $\mathcal{C}$ be the category of weight $\sunrolled$-modules and their $\sunrolled$-linear maps. The category $\mathcal{C}$ is $\mathbb{C}$-linear, locally finite and abelian. The bialgebra structure of $\sunrolled$ makes $\mathcal{C}$ into a monoidal category with unit object the one dimensional module $\C$ on which $H$, $E$ and $F$ act by zero. The associators and unitors are as for the category of complex vector spaces and are henceforth suppressed from the notation.
	
Let $V \in \mathcal{C}$. Denote by $V^{\vee} \in \mathcal{C}$ the dual vector space $\Hom_\mathbb{C}(V, \mathbb{C})$ with $\sunrolled$-module structure given by
\[
(x \cdot f)(v) = f(S(x)v), \qquad x \in \sunrolled, \qquad f \in V^{\vee}, \qquad v \in V.
\]
Given a basis $\{v_i\}_{i=1}^n$ of $V$ with dual basis $\{v_i^\vee\}_{i=1}^n$ of $V^\vee$, define
\[
\widehat{\ev}_V:V \otimes V^\vee \rightarrow \mathbb{C}, \qquad	v\otimes f \mapsto f(K^{1-r}v)
\]
and 
\[
\widehat{\coev}_V:\mathbb{C}\rightarrow V^\vee\otimes V, \qquad	1 \mapsto \sum_{i=1}^n K^{r-1}v_i^\vee\otimes v_i.
\]
Note that $\widehat{\coev}_V$ is independent of the choice of basis. A direct check shows that $\widehat{\ev}_V$ and $\widehat{\coev}_V$ are $\sunrolled$-linear and satisfy the snake relations, namely, that the compositions
\[
V \xrightarrow[]{\id_V \otimes \widehat{\coev}_V} V \otimes V^{\vee} \otimes V \xrightarrow{\widehat{\ev}_V \otimes \id_V} V
\]
and
\begin{equation}
\label{eq:snakeRel}
V^{\vee} \xrightarrow[]{\widehat{\coev}_V \otimes \id_{V^{\vee}}} V^{\vee} \otimes V \otimes V^{\vee} \xrightarrow{\id_{V^{\vee}} \otimes \widehat{\ev}_V} V^{\vee}
\end{equation}
are the respective identities. It follows that $\widehat{\ev}_V$ and $\widehat{\coev}_V$ are right duality morphisms. Define also
\[
\ev_V:V^\vee \otimes V \rightarrow \mathbb{C}, \qquad f\otimes v \mapsto f(v)
\]
and
\[
\coev_V:\mathbb{C}\rightarrow V\otimes V^\vee, \qquad 1 \mapsto \sum_{i=1}^n{v_i\otimes v_i^\vee}.
\]
These are the usual left duality morphisms in the category of finite dimensional vector spaces and are easily verified to be $\sunrolled$-linear. It follows that the category $\mathcal{C}$ is rigid. Hence, $\mathcal{C}$ is tensor in the sense of \cite[Definition 4.1.1]{etingof2016tensor}.

Given a finite dimensional vector space $V$, write $V \rightarrow V^{\vee \vee}$, $v \mapsto \spr{-}{v}$, for the canonical evaluation isomorphism.

\begin{lemma}
	The maps $\{p_V: V \rightarrow V^{\vee \vee}\}_{V \in \mathcal{C}}$ given by $p_V(v)= K^{1-r} \spr{-}{v}$ define a pivotal structure on $\mathcal{C}$.
\end{lemma}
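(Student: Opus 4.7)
The plan is to verify the three defining properties of a pivotal structure on $\mathcal{C}$: each $p_V$ is a $\sunrolled$-linear isomorphism, the family $\{p_V\}_{V \in \mathcal{C}}$ is natural in $V$, and it is compatible with the tensor product via the canonical identification $(V \otimes W)^{\vee \vee} \cong V^{\vee \vee} \otimes W^{\vee \vee}$.

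The key algebraic input is that $g := K^{1-r}$ is a \emph{pivotal element} of $\sunrolled$: it is grouplike, which is immediate from $\Delta(K) = K \otimes K$, and it satisfies $S^2(x) = g x g^{-1}$ for all $x \in \sunrolled$. I check this latter identity on the generators $E$, $F$, $H$ and $K^{\pm 1}$; only $E$ and $F$ are nontrivial. For $E$, I compute $S^2(E) = -S(EK^{-1}) = KEK^{-1} = q^2 E$ directly, and using $K^n E = q^{2n} E K^n$ I compute $K^{1-r} E K^{r-1} = q^{2(1-r)} E$; these match because $q^{2r} = e^{2\pi \sqrt{-1}} = 1$. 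The computation for $F$ is parallel.

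Unwinding the $\sunrolled$-action on $V^{\vee \vee}$ and using $S^2(K^{1-r}) = K^{1-r}$, I find that $p_V(v)$ is the functional $f \mapsto f(K^{1-r} v)$ on $V^\vee$. Consequently $p_V$ is a $\C$-linear isomorphism, being the composition of the canonical evaluation isomorphism $V \xrightarrow{\sim} V^{\vee \vee}$ with multiplication by the invertible operator $K^{1-r}$. Its $\sunrolled$-linearity reduces, after evaluating both sides on $f \in V^\vee$, to the identity $f(K^{1-r} x \cdot v) = f(S^2(x) K^{1-r} \cdot v)$, which is exactly the pivotal-element condition. Naturality is direct: for $\phi \in \Hom_{\mathcal{C}}(V, W)$ and $v \in V$, both $p_W(\phi(v))$ and $\phi^{\vee \vee}(p_V(v))$ send $f \in W^\vee$ to $f(K^{1-r} \phi(v))$, where in the latter case one commutes $K^{1-r}$ past $\phi$ by $\sunrolled$-linearity of $\phi$.

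Monoidality amounts to showing that $p_{V \otimes W}(v \otimes w)$ and $p_V(v) \otimes p_W(w)$ agree as elements of $(V \otimes W)^{\vee \vee}$ under the canonical identification. Evaluated on a simple tensor $f \otimes g \in V^\vee \otimes W^\vee \cong (V \otimes W)^\vee$, both yield $f(K^{1-r} v) \cdot g(K^{1-r} w)$; for $p_{V \otimes W}(v \otimes w)$ this step uses $\Delta(K^{1-r}) = K^{1-r} \otimes K^{1-r}$, which is again grouplikeness of $K$. There is no substantial obstacle here: the lemma is a manifestation of the standard fact that a grouplike element implementing $S^2$ by conjugation yields a pivotal structure on the module category, and the only subtlety is fixing the sign of the exponent of $K$, which the identity $S^2(E) = q^2 E$ forces to be $1 - r$ rather than $r - 1$.
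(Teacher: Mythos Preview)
Your proof is correct and follows essentially the same approach as the paper: verify that $\{p_V\}$ is a monoidal natural isomorphism to the double dual, using grouplikeness of $K^{1-r}$ for monoidality. The paper's proof is terser, simply asserting that $\sunrolled$-linearity is ``a direct check'', whereas you make explicit that this check is precisely the pivotal-element identity $S^2(x) = K^{1-r} x K^{r-1}$ and verify it on generators; this added detail is helpful and entirely in the spirit of the paper's argument.
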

\begin{proof}
    We need to verify that $\{p_V\}_{V \in \mathcal{C}}$ are the components of a monoidal natural isomorphism $p: \id_{\mathcal{C}} \Rightarrow (-)^{\vee} \circ (-)^{\vee}$. Naturality is immediate and a direct check shows that $p_V$ is $\sunrolled$-linear. The relation $\Delta(K^{1-r}) = K^{1-r} \otimes K^{1-r}$ implies the equality $p_{V \otimes W} = p_V \otimes p_W$, $V,W \in \mathcal{C}$, which is the required monoidality.
\end{proof}

One can readily see that the right and left duality structures defined above are compatible with the above pivotal structure, in the sense that the equalities $\id_{V^{\vee}} \otimes (p_V \circ \widehat{\coev}_V) = \coev_{V^{\vee}}$ and $\widehat{\ev}_V = (\ev_{V^{\vee}} \circ p_V) \otimes \id_{V^{\vee}}$ hold for each $V \in \mathcal{C}$.
    
\subsection{Simple modules}\label{sec:simpObj}

A non-zero module $V \in \mathcal{C}$ is called \emph{simple} (or \emph{irreducible}) if it has no non-zero proper submodules. In this section, we classify simple objects of $\mathcal{C}$. The results of this section are contained in  \cite[\S 5]{costantino2015}, although we give different proofs.
	
\begin{lemma}\label{highestweight}
	Every simple object of $\mathcal{C}$ is a highest weight module. 
\end{lemma}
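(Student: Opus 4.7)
The plan is to exploit finite dimensionality together with the weight-shifting behaviour of $E$ and $F$. First I would record the standard commutator computation: if $v \in V[\lambda]$, the relation $HE = EH + 2E$ gives $HEv = (\lambda+2)Ev$, so $E$ maps $V[\lambda]$ into $V[\lambda+2]$; symmetrically $F$ maps $V[\lambda]$ into $V[\lambda-2]$. Since $V \in \mathcal{C}$ is finite dimensional, the set $\{\lambda \in \C \mid V[\lambda] \neq 0\}$ is finite, so along the $2\Z$-coset of any occurring weight there is a maximal $\lambda_0$ with $V[\lambda_0] \neq 0$ and $V[\lambda_0 + 2] = 0$. Any non-zero $v \in V[\lambda_0]$ then satisfies $Ev \in V[\lambda_0+2] = 0$, and so is a highest weight vector.

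Next I would show that $V$ is generated by $v$. Consider the submodule $W = \sunrolled \cdot v$. By the triangular decomposition $\sunrolled = \overline{U}^{H,-}_q(\sltwo) \otimes \overline{U}^{H,0}_q(\sltwo) \otimes \overline{U}^{H,+}_q(\sltwo)$, together with $E v = 0$ and the fact that the Cartan part $\overline{U}^{H,0}_q(\sltwo)$ acts on $v$ by scalars (since $Hv = \lambda_0 v$ and $K v = q^{\lambda_0} v$), we get $W = \mathrm{span}_\C\{F^a v \mid 0 \leq a \leq r-1\}$, which is finite dimensional and closed under the action of $\sunrolled$. Because $v \neq 0$, the submodule $W$ is non-zero, and by simplicity of $V$ we conclude $W = V$. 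Thus $V$ is a highest weight module.

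There is essentially no obstacle in this argument; the only conceptual point worth emphasising is that the weight-shifting computation and the existence of a maximal weight together force the presence of a highest weight vector in any finite dimensional object of $\mathcal{C}$, and that simplicity then upgrades this to the generation statement. The finite dimensionality of $W$ is a pleasant corollary of $E^r = F^r = 0$, although only $E v = 0$ is actually used.
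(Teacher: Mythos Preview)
Your proof is correct, and the overall architecture---find a highest weight vector, then invoke simplicity to conclude it generates---matches the paper. The difference lies in how the highest weight vector is produced. You use finite dimensionality: the weight set is finite, so along any $2\Z$-coset there is a maximal weight $\lambda_0$, and any non-zero $v \in V[\lambda_0]$ is killed by $E$. The paper instead uses the nilpotency relation $E^r = 0$ directly: starting from an arbitrary weight vector $v$, there is a least $l > 0$ with $E^l v = 0$, and $E^{l-1}v$ is the desired highest weight vector. Your approach is the classical one for finite dimensional modules and would apply equally well to the unrestricted algebra $U_q^H(\sltwo)$; the paper's approach is slightly more intrinsic to the restricted quotient and does not need to appeal to the finiteness of the weight set. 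Your explicit description of $W$ via the triangular decomposition is correct but more than is needed---the paper simply notes that $\sunrolled \cdot v$ is a non-zero submodule and invokes simplicity.
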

\begin{proof}
   Let $V \in \mathcal{C}$ be simple and $v \in V$ a weight vector. Since $E^r=0$, there exists a minimal integer $l > 0$ such that $E^l v=0$. Then $E^{l-1} v$ is a highest weight vector and $\sunrolled \cdot E^{l-1} v \subset V$ is a non-zero submodule which, by simplicity, is equal to $V$.
\end{proof}

Let $\alpha \in \C$. Denote by $\mathbb{C}_{\alpha + r -1}$ the one dimensional weight $\Bunrolled$-module of $H$-weight $\alpha + r -1$ on which $E$ and $F$ act by zero.
	
\begin{definition}
	The \emph{Verma module of highest weight $\alpha+r-1$} is the $\sunrolled$-module
	$V_\alpha = \sunrolled \otimes_{\Bunrolled} \mathbb{C}_{\alpha + r - 1}.$
\end{definition}
	
Write $v_i$ for the vector $F^i \otimes 1 \in V_\alpha$. The Poincar\'{e}--Birkhoff--Witt basis for $\sunrolled$ shows that $\{v_0, \ldots, v_{r-1}\}$ \label{basis} is a weight basis of $V_\alpha$ and $V_{\alpha} \in \mathcal{C}$. Direct calculations show that the $\sunrolled$-action on $V_\alpha$ is given by
\[
H v_i = (\alpha + r - 1 - 2i) v_i, \quad Ev_i = \frac{\{i\}\{i -\alpha\}}{\{1\}^2}v_{i-1}, \quad Fv_i = v_{i+1},
\]
where by convention $v_{-1}=v_{r}=0$. In particular, $V_\alpha$ is a highest weight module generated by $v_0$. The structure of $V_{\alpha}$ is summarized by the diagram
\[
\begin{tikzcd}[column sep=4em]
	0 & v_{r-1} \ar[loop above, "H=\alpha - r + 1"] \ar[r, bend left, "E"] \ar[l, bend left, "F"below] & v_{r-2} \ar[loop above, "H=\alpha -r + 3"] \ar[l, bend left, "F"below] \ar[r, bend left, "E"above] & \ldots \ar[r, bend left, "E"above right] \ar[l, bend left, "F"below] & v_{1} \ar[loop above, "H=\alpha + r - 3"] \ar[r, bend left, "E"] \ar[l, bend left, "F"] & v_0 \ar[loop above, "H=\alpha + r -1"] \ar[r, bend left, "E"] \ar[l, bend left, "F"] & 0. \\
\end{tikzcd}
\]

\begin{lemma}\label{verma}
  If $V$ is a highest weight module of highest weight $\alpha + r - 1$, then there exists a surjection $V_\alpha \twoheadrightarrow V$.
\end{lemma}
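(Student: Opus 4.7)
The plan is to invoke the universal property of induction for $V_\alpha = \sunrolled \otimes_{\Bunrolled} \mathbb{C}_{\alpha + r - 1}$. Concretely, by the tensor-hom adjunction, $\sunrolled$-linear maps $V_\alpha \to V$ are in bijection with $\Bunrolled$-linear maps $\mathbb{C}_{\alpha + r - 1} \to V$, so it suffices to exhibit a non-zero $\Bunrolled$-linear map $\mathbb{C}_{\alpha + r - 1} \to V$ whose image generates $V$ as a $\sunrolled$-module.

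First I would fix a generating highest weight vector $v \in V$ of weight $\alpha + r - 1$, which exists by hypothesis. By \cref{def:weightModule}, $H$ acts on $v$ by $\alpha + r - 1$ and $K$ acts by $q^{\alpha + r - 1}$, and by the highest-weight assumption $E v = 0$. Hence the line $\C v \subset V$ is stable under the subalgebra $\Bunrolled$ generated by $E$, $H$ and $K^{\pm 1}$, and is isomorphic as a $\Bunrolled$-module to $\mathbb{C}_{\alpha + r - 1}$. The assignment $1 \mapsto v$ thus defines a $\Bunrolled$-linear map $\mathbb{C}_{\alpha + r - 1} \to V$.

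Next I would extend this via induction to the $\sunrolled$-linear map
\[
\varphi : V_\alpha \longrightarrow V, \qquad x \otimes 1 \mapsto x \cdot v, \qquad x \in \sunrolled.
\]
Well-definedness follows from the $\Bunrolled$-linearity of $1 \mapsto v$, together with the fact that $V_\alpha$ is the tensor product over $\Bunrolled$; under the PBW basis this amounts to sending $v_i = F^i \otimes 1 \mapsto F^i v$. Since $V$ is generated as a $\sunrolled$-module by $v$, the image of $\varphi$ contains $\sunrolled \cdot v = V$, proving surjectivity.

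There is no real obstacle here — the argument is the standard universal property of Verma / induced modules. The only point worth pausing over is verifying that the line $\C v$ really is closed under all of $\Bunrolled$ (equivalently, under $E$, $H$, and $K^{\pm 1}$), which is exactly the content of $v$ being a highest weight vector of weight $\alpha + r - 1$ in a weight module.
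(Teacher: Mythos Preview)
Your proof is correct and follows essentially the same approach as the paper: both invoke the tensor--hom adjunction (Frobenius reciprocity) to identify $\Hom_{\mathcal{C}}(V_\alpha, V)$ with $\Bunrolled$-maps $\mathbb{C}_{\alpha+r-1} \to V$, then use a generating highest weight vector to produce the desired surjection. The paper states the adjunction isomorphism more tersely, while you unpack it a bit more explicitly, but the argument is the same.
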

\begin{proof}
By adjunction, there is an isomorphism
\[
\Hom_{\mathcal{C}}(V_{\alpha}, V) \simeq \Hom_{\Bunrolled}(\mathbb{C}_{\alpha + r - 1},V_{\big\vert \Bunrolled}).
\]
It follows that $\Hom_{\mathcal{C}}(V_{\alpha}, V)$ is isomorphic to the subspace of highest weight vectors of weight $\alpha+r-1$ in $V$. In particular, if $v \in V$ is a generating highest weight vector of weight $\alpha+r-1$, then the assignment $v_0 \mapsto v$ extends to a surjective morphism $V_{\alpha} \rightarrow V$ in $\mathcal{C}$.
\end{proof}

Using \cref{verma}, it is straightforward to verify that the map $v_{\alpha,r-1}^{\vee} \mapsto v_{-\alpha,0}$ extends to a $\sunrolled$-module isomorphism
\begin{equation}
\label{ValphaDual}
V_{\alpha}^{\vee} \xrightarrow[]{\sim} V_{-\alpha}.
\end{equation}
	
It follows from \cref{highestweight,verma} that any simple object of $\mathcal{C}$ is a quotient of a unique Verma module $V_\alpha$. In particular, a simple module has dimension at most $r$.
	
\begin{proposition} \label{simplemodules}
Let $\alpha \in \mathbb{C}$.
    \begin{enumerate}
        \item \label{eins} If $\alpha \notin \Z \setminus r \Z$, then $V_{\alpha}$ is simple.
        \item \label{zwei} If $\alpha \in \mathbb{Z} \setminus r\mathbb{Z}$ is written in its unique form as $\alpha = (l-1)r + n + 1$ with $0 \leq n \leq r-2$ and $l \in \mathbb{Z}$, then there exists a non-split short exact sequence $$ 0 \rightarrow S_{r-n-2}^{(l-1)r} \rightarrow V_\alpha \rightarrow S_{n}^{lr} \rightarrow 0$$ which is a Jordan--H\"{o}lder filtration of $V_{\alpha}$.
        \item \label{drei} Any simple object of $\mathcal{C}$ is isomorphic to a unique module of the form $V_\alpha$, $\alpha \in \C \setminus \Z \cup r \Z$, or $S_n^{lr}$, $l \in \Z$, $0 \leq n \leq r-2$.
    \end{enumerate} 
\end{proposition}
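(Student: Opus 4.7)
The key observation driving the proof is the formula $E v_i = \frac{\{i\}\{i-\alpha\}}{\{1\}^2} v_{i-1}$ together with the identity $\{z\} = q^z - q^{-z} = 2\sqrt{-1}\,\sin(\pi z / r)$. This shows $\{z\} = 0$ iff $z \in r\Z$. Hence $\{i\} \neq 0$ for $1 \leq i \leq r-1$, while $\{i - \alpha\} = 0$ iff $\alpha \equiv i \pmod r$. So the coefficient in $E v_i$ vanishes for some $1 \leq i \leq r-1$ iff $\alpha \in \Z \setminus r\Z$, and in that case the vanishing occurs at the unique index $i_0 = n+1$ determined by the representative $\alpha = (l-1)r + n + 1$ with $0 \leq n \leq r-2$.

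For (\ref{eins}), assume $\alpha \notin \Z \setminus r\Z$ and let $W \subseteq V_\alpha$ be a nonzero submodule. Since $V_\alpha$ is a weight module, $W$ contains some $v_i$; iterating $E$ (which never annihilates basis vectors in the relevant range) forces $v_0 \in W$ up to a nonzero scalar, and then iterating $F v_i = v_{i+1}$ recovers every $v_j$, so $W = V_\alpha$. For (\ref{zwei}), write $\alpha = (l-1)r + n + 1$ and consider $W := \Span\{v_{n+1}, \ldots, v_{r-1}\}$. This is a submodule: it is $F$-stable, $E v_{n+1} = 0$, and the remaining $E$-actions land in $W$. The generator $v_{n+1}$ has $H$-weight $\alpha + r - 1 - 2(n+1) = lr - n - 2 = (r - n - 2) + (l-1)r$, and the same vanishing analysis applied to the basis of $W$ shows no further $E$-annihilations occur, so the argument of (\ref{eins}) shows $W$ is simple of dimension $r - n - 1$; this identifies $W \simeq S_{r-n-2}^{(l-1)r}$. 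The quotient $V_\alpha / W$ has basis $\bar v_0, \ldots, \bar v_n$, highest weight $\alpha + r - 1 = n + lr$, and dimension $n+1$, and is simple by the same reasoning, hence isomorphic to $S_n^{lr}$. Non-splitness is forced by the observation that $F^{n+1} v_0 = v_{n+1} \neq 0$ in $V_\alpha$, whereas in any simple module of dimension $n+1$ the $(n+1)$-st power of $F$ annihilates the highest weight vector; a splitting would conflict with this. The length-two filtration $0 \subset W \subset V_\alpha$ is then Jordan--H\"older by construction.

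For (\ref{drei}), \cref{highestweight} and \cref{verma} together show that every simple object of $\mathcal{C}$ is a simple quotient of some unique Verma module $V_\alpha$. By (\ref{eins}) these simple quotients are $V_\alpha$ itself when $\alpha \in \C \setminus (\Z \setminus r \Z)$, and by (\ref{zwei}) they are $S_n^{lr}$ when $\alpha \in \Z \setminus r\Z$. Uniqueness within each family follows by comparing highest weights, and the two families are disjoint by dimension ($r$ versus at most $r - 1$). The main obstacle I anticipate is the bookkeeping in (\ref{zwei}): one must carefully reconcile the indexing convention for $S_n^{lr}$ (dimension $n+1$, highest weight $n + lr$) with the parametrization $\alpha = (l-1)r + n + 1$, on both the submodule side (where the highest weight shifts to $lr - n - 2$) and the quotient side, and check in each case that the unique vanishing of the $E$-coefficient falls at exactly the expected index.
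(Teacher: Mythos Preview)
Your proof is correct and follows essentially the same route as the paper's: both hinge on analyzing when the coefficient $\frac{\{i\}\{i-\alpha\}}{\{1\}^2}$ vanishes, identify the unique proper submodule $W = \Span\{v_{n+1},\ldots,v_{r-1}\}$ in case (\ref{zwei}), and deduce (\ref{drei}) from \cref{highestweight,verma}. The only notable difference is the non-splitness argument: the paper observes that $W$ is the \emph{unique} proper submodule of $V_\alpha$ (so a splitting would produce a second one), whereas you argue directly that a section $S_n^{lr} \hookrightarrow V_\alpha$ would force $v_0$ into the complement (by one-dimensionality of the top weight space) and then $F^{n+1}v_0 = v_{n+1}$ would have to vanish---both arguments are valid and of comparable length.
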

	
\begin{proof}
	If $\alpha \notin \Z \setminus r \Z$, then $\frac{\{i\}\{i - \alpha\}}{\{1\}^2} \neq 0$ for $i=1, \dots, r-1$, as follows from the assumption that $q$ is a primitive $2r$\textsuperscript{th} root of unity. It follows from the explicit form of the action of $E$ on $V_{\alpha}$ that $Ev_i \neq 0$ for $i=1, \dots, r-1$, whence $V_{\alpha}$ is simple. 
		
	If instead $\alpha \in \mathbb{Z} \setminus r\mathbb{Z}$, then $V_\alpha$ has exactly one proper submodule. Indeed, write $\alpha = (l-1)r + n + 1$ as in the statement of the proposition, so that $V_\alpha$ is of highest weight $lr + n$. Examining the action of $E$ on $V_\alpha$ shows that $Ev_{n+1} = 0$ and $Ev_i \neq 0$ if $i\neq 0, n+1$. Hence, $S := \Span\{v_{n+1}, \ldots, v_{r-1}\}$ is the unique proper submodule of $V_\alpha$. The module $S$ has dimension $r - n - 1$ and its quotient $S_n^{lr}:=V_\alpha / S$ is a simple highest weight module of highest weight $lr + n$ and dimension $n+1$. By \cref{verma}, there exists a surjection $V_{(l-1)r-n-1} \rightarrow S$ which, by the argument of this paragraph, descends to an isomorphism $S_{r - n - 2}^{(l-1)r} \xrightarrow[]{\sim}S$. Finally, the uniqueness of $S$ implies that the sequence $$ 0 \rightarrow S_{r-n-2}^{(l-1)r} \rightarrow V_\alpha \rightarrow S_{n}^{lr} \rightarrow 0$$ is non-split.
		
	By \cref{highestweight,verma}, any simple module is a quotient of a Verma module. Thus, the third statement of the proposition follows from the first two.
\end{proof}
	
\begin{remark}
    Since $\overline{U}_q(\sltwo)$ is a Hopf subalgebra of $\sunrolled$, there is a monoidal forgetful functor $\mathcal{C} \rightarrow \overline{U}_q(\sltwo) \lmod$. In the notation of \cite[\S 2.11]{jantzen}, this functor sends the simple objects $S^{lr}_n$ and $V_{\alpha}$ of $\mathcal{C}$ to $L(n,(-1)^l)$ and $Z_0(q^{\alpha+r-1})$, respectively.
\end{remark}

\cref{simplemodules} implies that a simple object is determined up to isomorphism by its highest weight and that the simple objects $S^{lr}_n$ are neither injective nor projective.
	
\begin{proposition}\label{valphaproj}
    If $\alpha \in \mathbb{C} \setminus \Z \cup r \Z$, then $V_{\alpha} \in \mathcal{C}$ is projective and injective.
\end{proposition}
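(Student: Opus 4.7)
The strategy rests on duality. In the rigid monoidal category $\mathcal{C}$, an object $X$ is projective if and only if $X^\vee$ is injective, since $(-)^\vee$ is an exact contravariant self-equivalence interchanging surjections and injections. Combined with the isomorphism $V_\alpha^\vee \simeq V_{-\alpha}$ from \eqref{ValphaDual}, and the observation that the set $\mathbb{C} \setminus \Z \cup r\Z$ is closed under negation, it therefore suffices to prove projectivity of $V_\alpha$ for all $\alpha$ in the stated range. The generic case $\alpha \in \mathbb{C} \setminus \Z$ is immediate: then $\alpha + r - 1 \notin \Z$, so $V_\alpha$ lies in the semisimple block $\mathcal{C}_{\overline{\alpha+r-1}}$ (by \cref{Cisgss}) and is projective automatically.

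The substantive case is $\alpha = mr \in r\Z$, where the block containing $V_{mr}$ is non-semisimple. To split an arbitrary short exact sequence $0 \to A \to M \xrightarrow{\pi} V_{mr} \to 0$, it is enough, by the universal property of Verma modules (\cref{verma}), to lift $v_0 \in V_{mr}$ to a highest weight vector $\tilde{v}' \in M[(m+1)r - 1]$. Starting from any preimage $\tilde{v} \in M[(m+1)r-1]$ of $v_0$, set $w := E\tilde{v} \in A[(m+1)r+1]$; the problem then reduces to exhibiting $u \in A[(m+1)r-1]$ with $Eu = w$, since $\tilde{v}' := \tilde{v} - u$ will do the job.

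I would construct $u$ from the ansatz $u = \sum_{k=0}^{r-2} c_k F^{k+1} E^k w$, whose summation range is forced by $F^r = 0$ and by the observation $E^{r-1} w = E^r \tilde{v} = 0$. Applying $E$ via the standard commutator identity $EF^n = F^n E + [n]\, F^{n-1}\frac{q^{1-n}K - q^{n-1}K^{-1}}{q-q^{-1}}$, and invoking the arithmetic simplification $[(m+1)r + j] = (-1)^{m+1}[j]$ (a direct consequence of $q^r = -1$), the equation $Eu = w$ reduces to the recursion $c_0 = (-1)^{m+1}$ and $c_k = -(-1)^{m+1} c_{k-1}/[k+1]^2$, which is well-defined because $[k+1] \neq 0$ for $1 \leq k+1 \leq r-1$. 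The main obstacle will be carefully executing this telescoping calculation and verifying that the boundary term at $k = r-2$ vanishes automatically; this is the one place where the nilpotency $E^r \tilde{v} = 0$ of the original lift plays a crucial role.
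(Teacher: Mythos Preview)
Your approach---correcting an arbitrary lift of $v_0$ to a highest weight vector via the ansatz $u = \sum_k c_k F^{k+1} E^k w$, then deducing injectivity from projectivity through the duality $V_\alpha^\vee \simeq V_{-\alpha}$---is exactly the paper's argument. The telescoping calculation and the role of $E^{r-1}w = E^r\tilde{v} = 0$ at the boundary are also handled the same way.

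There is, however, a genuine gap: your shortcut for the generic case $\alpha \in \C \setminus \Z$ appeals to \cref{Cisgss}, but in this paper the proof of \cref{Cisgss} uses the injectivity of the $V_\alpha$ established in \cref{valphaproj}. As written, the argument is circular. The fix is immediate and is what the paper does: simply run your explicit construction for \emph{all} $\alpha \in (\C \setminus \Z) \cup r\Z$ at once rather than only for $\alpha = mr$. You do not need the simplification $[(m+1)r+j] = (-1)^{m+1}[j]$; the recursion for the $c_k$ is solvable whenever the relevant $q$-integers $[k+1]$ and $[\alpha + r + k + 1]$ are nonzero for $0 \le k \le r-2$, and this holds precisely when $\alpha \notin \Z \setminus r\Z$. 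So the case split is unnecessary, and dropping it removes the circularity.
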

\begin{proof}
Let $f: V \twoheadrightarrow W$ be a surjection in $\mathcal{C}$ and $\phi: V_{\alpha} \rightarrow W$ a non-zero morphism. By the proof of \cref{verma}, the map $\phi$ is determined by a highest weight vector $\phi(v_0)=w \in W$ of weight $\alpha+r-1$. Surjectivity of $f$ implies that $w$ has a preimage under $f$, say $v$, which is of weight $\alpha+r-1$ and satisfies $Ev \in \ker f$.

Let $\xi = Ev$, which is of weight $\alpha+r+1$ and satisfies $E^{r-1} \xi=0$. For any $a_0, \dots, a_{r-2} \in \mathbb{C}$, the vector
\[
v^{\prime} = v + \sum_{i=0}^{r-2} a_i F^{i+1}E^i \xi.
\]
is of weight $\alpha+r-1$ and satisfies $f(v^{\prime})=w$. Using \cite[\S 1.3]{jantzen}, we compute
\[
E v^{\prime}
=
\xi + \sum_{i=0}^{r-2} a_i(F^{i+1}E^{i+1} \xi + [i+1] [\alpha+r+1+i] F^i E^i \xi).
\]
Then $Ev^{\prime}=0$ if and only if the recursive equations
\[
a_i = - [i+2][\alpha+r+2+i] a_{i+1} \qquad i=-1, \dots, r-3,
\]
hold, with $a_{-1}=1$. This recursive system determines $\{a_i\}_i$ if and only if $\alpha \in \mathbb{C} \setminus \Z \cup r \Z$, as otherwise the coefficient of some $a_{i+1}$ vanishes. Arguing as in the start of the proof, the assignment $v_0 \mapsto v^{\prime}$ determines a $\sunrolled$-linear map $\tilde{\phi}:V_{\alpha} \rightarrow V$ which satisfies $f \circ \tilde{\phi} = \phi$. This establishes the projectivity of $V_{\alpha}$.

In view of the isomorphism \eqref{ValphaDual} and the previous paragraph, the module $V_\alpha^\vee$ is projective. Standard adjunction isomorphisms (see \cite[Proposition 2.10.8]{etingof2016tensor}) give a natural isomorphism of contravariant functors
\[
\Hom_{\mathcal{C}}(-,V_{\alpha}) \simeq \Hom_{\mathcal{C}}(V_{\alpha}^{\vee}, -) \circ (-)^{\vee}.
\]
Because $V_\alpha^\vee$ is projective, $\Hom_\mathcal{C}(V_{\alpha}^{\vee}, -)$ is an exact functor. Because $(-)^\vee$ is an exact functor at the level of complex vector spaces, it is also exact on $\mathcal{C}$. Hence, the functor $\Hom_{\mathcal{C}}(-,V_{\alpha})$ is exact and $V_\alpha$ is injective.
\end{proof}

\subsection{Generic semisimplicity}
	
Recall that an abelian category is called \emph{semisimple} if every object is a direct sum of simple objects. In view of \cref{simplemodules}(\ref{zwei}), the category $\mathcal{C}$ is not semisimple. However, $\mathcal{C}$ fails to be semisimple in a controlled manner. The goal of this section is to make this statement precise. To do so, we begin with some general definitions from \cite{geer_2017}.

Let $G$ be an additive abelian group.
	
\begin{definition}
	A \emph{$G$-grading} on a rigid monoidal category $\mathcal{D}$ is the data of non-empty full subcategories $\mathcal{D}_g \subset \mathcal{D}$, $g \in G$, such that $\mathcal{D} = \bigoplus_{g \in G} \mathcal{D}_g$ and $V^\vee \in \mathcal{D}_{-g}$ and $V \otimes V' \in \mathcal{D}_{g+g'}$ whenever $V \in \mathcal{D}_g$ and $V' \in \mathcal{D}_{g'}$.
\end{definition}
	
\begin{definition}
	A subset $X \subset G$ is called \emph{symmetric} if $-X = X$ and \emph{small} if $G \neq \bigcup_{i=1}^n (g_i+ X)$ for all $g_1, \ldots, g_n \in G$.
\end{definition}

\begin{definition}
	A $G$-graded category $\mathcal{D}$ is called \emph{generically semisimple with small symmetric subset $X \subset G$} if $\mathcal{D}_g$ is semisimple whenever $g \in G\setminus X$. In this case, a simple module $V \in \mathcal{C}_g$ in degree $g \in G \setminus X$ is called \emph{generic simple}.
\end{definition}

Consider again the category of weight modules over $\sunrolled$. Let $G$ be the additive group $\mathbb{C}/2\Z$. For each $ \overline{\alpha} \in \mathbb{C} \slash 2\Z$, let $\mathcal{C}_{\overline{\alpha}}$ be the full subcategory of $\mathcal{C}$ consisting of modules whose weights are in the class $\overline{\alpha}$. The Hopf algebra structure of $\sunrolled$ shows that $\mathcal{C} = \bigoplus_{\overline{\alpha} \in \mathbb{C} \slash 2 \Z} \mathcal{C}_{\overline{\alpha}}$ is a $\C \slash 2 \Z$-grading.

\begin{theorem} \label{Cisgss}
	The $\C/2\Z$-graded category $\mathcal{C}$ is generically semisimple with small symmetric subset $\Z \slash 2 \Z \subset \C \slash 2\Z$.
\end{theorem}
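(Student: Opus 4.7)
The plan is to verify the two conditions in the definition of generic semisimplicity: that $\Z/2\Z$ is small and symmetric as a subset of $\C/2\Z$, and that $\mathcal{C}_{\overline{\alpha}}$ is semisimple whenever $\overline{\alpha} \notin \Z/2\Z$. The first is a quick cardinality check: the image of $\Z$ in $\C/2\Z$ is the two-element set $\{\overline{0}, \overline{1}\}$, which is trivially symmetric, and since $\C/2\Z$ is uncountable it cannot be covered by any finite union of translates of a finite set.

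For the generic semisimplicity, first I would pin down the simple objects of $\mathcal{C}_{\overline{\alpha}}$ for $\overline{\alpha} \notin \Z/2\Z$. By \cref{highestweight} and \cref{verma}, a simple $W \in \mathcal{C}_{\overline{\alpha}}$ is a quotient of some Verma module $V_\mu$ with $\mu + r - 1 \in \overline{\alpha}$. Since $\overline{\alpha}$ contains no integer and $r-1 \in \Z$, the weight $\mu$ is not an integer; in particular $\mu \notin \Z \setminus r\Z$. By \cref{simplemodules}(\ref{eins}), $V_\mu$ is itself simple, so $W \simeq V_\mu$, and by \cref{valphaproj} the module $W$ is both projective and injective in $\mathcal{C}$. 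Since $\mathcal{C}_{\overline{\alpha}}$ is a direct summand of $\mathcal{C}$ as an abelian category (morphisms between objects of distinct homogeneous components vanish), the projectivity and injectivity of $W$ descend to $\mathcal{C}_{\overline{\alpha}}$.

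To finish, I would argue by induction on length. Every object of $\mathcal{C}_{\overline{\alpha}}$ has finite length, as all modules are finite dimensional over $\C$. For non-zero $V \in \mathcal{C}_{\overline{\alpha}}$, pick a simple submodule $W \hookrightarrow V$; injectivity of $W$ splits the inclusion, yielding $V \simeq W \oplus V/W$, and induction applied to $V/W$ completes the argument. The only bookkeeping to keep straight is the implication that $\overline{\alpha} \notin \Z/2\Z$ forces $\mu \notin \Z$, which is entirely formal; the genuine content lies in the two inputs \cref{simplemodules} and \cref{valphaproj}, so I do not anticipate any serious technical obstacle in carrying out this plan.
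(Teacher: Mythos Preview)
Your proposal is correct and follows essentially the same approach as the paper: both arguments find a simple submodule of a given object in $\mathcal{C}_{\overline{\alpha}}$, identify it as a Verma module $V_\mu$ with $\mu \notin \Z$ via \cref{simplemodules}, invoke injectivity from \cref{valphaproj} to split it off, and induct (you on length, the paper on dimension). The only cosmetic difference is that the paper produces the simple submodule directly by choosing a highest weight vector, whereas you appeal abstractly to the existence of a simple submodule and then classify it; you also spell out the small/symmetric check that the paper leaves implicit.
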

\begin{proof}
	Let $\overline{\alpha} \in (\mathbb{C}/2\Z) \setminus (\Z/2\Z)$ and $V \in \mathcal{C}_{\overline{\alpha}}$ a non-zero object. Then $V$ contains a highest weight vector $v$ of weight $\alpha \in \C$, where $\alpha$ is in the class $\overline{\alpha}$. The assumption on $\overline{\alpha}$ implies that the submodule generated by $v$ is isomorphic to $V_{\alpha - r + 1}$; see the proof of \cref{simplemodules}. The module $V_{\alpha - r + 1}$ is injective by \cref{valphaproj}, whence there is a splitting $V \simeq V_{\alpha - r + 1} \oplus V^{\prime}$ for some $V^{\prime} \in \mathcal{C}_{\overline{\alpha}}$ of dimension strictly less than that of $V$. An induction argument on the dimension of $V$ then completes the proof.
\end{proof}

In view of \cref{simplemodules}, the generic simple objects of $\mathcal{C}$ are the Verma modules $V_\alpha$ with $\alpha \in \C \setminus \Z$.

\subsection{Braiding}\label{sec:braiding}
	
In this section, we construct a braiding on $\mathcal{C}$. The form of the braiding is motivated by the universal $R$-matrix for the $\hbar$-adic quantum group $U_{\hbar}(\sltwo)$, as described in \cite[\S 10]{drinfeld1986quantum}, \cite[\S\S 4.5 and A.2]{ohtsuki2002quantum}.
	
\begin{definition}
	The \emph{$r$-truncated q-exponential map} is $\exp_q^<(x) = \sum_{l=0}^{r-1} \frac{q^{l (l-1) / 2}}{[l]!} x^l$.
\end{definition}
	
Let $V, W \in \mathcal{C}$ with weight bases $\{v_i\}_i$ and $\{w_j\}_j$ of weights $\{\lambda_i^v\}_i$ and $\{\lambda_j^w\}_j$, respectively. Define $q^{H\otimes H/2} \in \End_{\mathbb{C}}(V \otimes W)$ by 
\[
q^{H\otimes H/2}(v_i\otimes w_j) = q^{\lambda_i^v \lambda_j^w/2}v_i\otimes w_j
\]
and $R \in \End_{\mathbb{C}}(V \otimes W)$ as
\[
R = q^{H \otimes H / 2} \circ \exp_q^<(\{1\} E \otimes F) = q^{H\otimes H/2} \circ \sum_{l=0}^{r-1}\frac{\{1\}^{2l}}{\{l\}!}q^{l(l-1)/2}E^l\otimes F^l,
\]
where $\exp_q^<(\{1\} E \otimes F)$ is viewed as a $\C$-linear map via left multiplication. Finally, define $c_{V,W} \in \Hom_{\mathbb{C}}(V \otimes W, W \otimes V)$ as
\[
c_{V,W}(v\otimes w) = \tau R(v\otimes w),
\]
where $\tau$ is the swap map $V \otimes W \rightarrow W \otimes V$, $v \otimes w \mapsto w \otimes v$.

\begin{lemma}\label{homomorphism}
	The map $c_{V,W}$ is $\sunrolled$-linear.
\end{lemma}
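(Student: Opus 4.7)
The plan is to show that for each $x \in \sunrolled$, the identity $R \cdot \Delta(x) = \Delta^{\mathrm{op}}(x) \cdot R$ holds as operators on $V \otimes W$. Indeed, $\sunrolled$-linearity of $c_{V,W} = \tau R$ amounts to $\tau R \circ \Delta(x) = \Delta(x) \circ \tau R$, and since $\tau \circ \Delta(x) = \Delta^{\mathrm{op}}(x) \circ \tau$ as maps $V \otimes W \to W \otimes V$, this is equivalent to $R \Delta(x) = \Delta^{\mathrm{op}}(x) R$. Because both sides are algebra homomorphisms in $x$, it suffices to verify the identity on the generators $H$, $K^{\pm 1}$, $E$, and $F$.

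Next I would dispatch the easy cases $x \in \{H, K^{\pm 1}\}$. For these, $\Delta(x) = \Delta^{\mathrm{op}}(x)$, so the task is to show $R$ commutes with $\Delta(H)$ and $\Delta(K)$. The factor $q^{H \otimes H/2}$ is diagonal in any weight basis and therefore commutes with the diagonal operators $K \otimes K$ and $H \otimes 1 + 1 \otimes H$. For the factor $\sum_{l} \frac{\{1\}^{2l}}{\{l\}!}q^{l(l-1)/2} E^l \otimes F^l$, each summand $E^l \otimes F^l$ has $\Delta(H)$-weight $2l + (-2l) = 0$ and is homogeneous of bidegree $(2l, -2l)$, so it commutes with $\Delta(H)$ and $\Delta(K)$ (whose action on a weight vector is $q^{\lambda+\mu}$, unaffected by shifting both weights compatibly).

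The substantive step is $x = E$ (and, symmetrically, $x = F$). Here I would first record the conjugation formulas for $D := q^{H \otimes H/2}$, derived directly from weight considerations:
\[
D(E \otimes 1)D^{-1} = E \otimes K, \qquad D(1 \otimes E)D^{-1} = K \otimes E,
\]
and similarly $D(F \otimes 1)D^{-1} = F \otimes K^{-1}$, $D(1 \otimes F)D^{-1} = K^{-1} \otimes F$. Second, I would establish by induction on $n$ the quantum commutator identity
\[
E F^n - F^n E = [n]\, F^{n-1}\,\frac{q^{-(n-1)}K - q^{n-1}K^{-1}}{q-q^{-1}},
\]
valid in $U_q^H(\sltwo)$ and inherited by $\sunrolled$. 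With these tools in hand, one expands both sides of the desired identity, migrates $D$ to the left using the conjugation formulas, and then collects coefficients of each monomial $E^{l+1} \otimes (\text{something}) F^{l-1}$. The resulting sum telescopes: the ratio $c_l/c_{l-1} = \{1\}^2 q^{l-1}/\{l\}$ of the coefficients $c_l = \frac{\{1\}^{2l}}{\{l\}!}q^{l(l-1)/2}$ in $\exp_q^<$ is precisely what cancels the factors produced by the quantum commutator together with the $q$-shift from $D$-conjugation.

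The principal obstacle is therefore bookkeeping rather than conceptual: making sure that after expanding $R\,\Delta(E) - \Delta^{\mathrm{op}}(E)\,R$ into four families of terms and applying the commutator identity, the coefficients match exactly so that everything cancels. One subtlety specific to the restricted setting is the truncation at $l=r-1$: the telescoping would naturally produce a boundary term at $l = r$, but this term contains either $E^r$ or $F^r$, which are zero in $\sunrolled$. This is precisely why the $q$-exponential is truncated at $l=r-1$ and why the construction requires passing to the restricted algebra; the case $x=F$ is entirely symmetric and follows from an analogous computation using $[F,E^n]$.
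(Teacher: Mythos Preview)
Your proposal is correct and follows essentially the same approach as the paper: reduce to generators, handle $H$ and $K^{\pm 1}$ by weight considerations, and for $E$ (resp.\ $F$) use the conjugation formulas for $q^{H\otimes H/2}$ to reduce to the identity $(E\otimes K^{-1}+1\otimes E)\exp_q^<(\{1\}E\otimes F)=\exp_q^<(\{1\}E\otimes F)(E\otimes K+1\otimes E)$ in $\sunrolled\otimes\sunrolled$. The only difference is that the paper cites \cite[Equation~A.10]{ohtsuki2002quantum} for this last identity, whereas you outline the telescoping verification directly; your observation that the boundary term vanishes because it contains $E^r$ or $F^r$ is exactly the point that makes the truncated $q$-exponential work in the restricted setting.
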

\begin{proof}
	It suffices to check linearity of $c_{V,W}$ on the generators $H,F,E$. Let $v \in V$ and $w \in W$ of weight $\lambda^v$ and $\lambda^w$, respectively. We have
	\begin{align*}
		H \cdot c_{V,W}(v \otimes w) &= \tau q^{H\otimes H/2}\sum_{l=0}^{r-1}\frac{q^{l (l-1) / 2}}{[l]!}E^l\otimes F^l ((H+2l)v \otimes w + v \otimes (H-2l)w)\\
		&= (\lambda^v + \lambda^w) c_{V,W}(v \otimes w)\\
		&= c_{V,W}(H \cdot v \otimes w).
	\end{align*}
	We prove $E$-linearity. We have an equality
	\[
	K \otimes E \circ q^{H \otimes H / 2} = q^{H \otimes H / 2} \circ 1 \otimes E
	\]
	in $\End_{\C}(V \otimes W)$. Indeed, we compute
	\[
	K \otimes E \circ q^{H \otimes H / 2} (v_i \otimes w_j) = q^{\lambda^v_i}q^{\lambda^v_i \lambda^w_j / 2} v_i \otimes E w_{j} = q^{\lambda^v_i (\lambda^w_j +2) / 2} v_i \otimes E w_{j}
	\]
	and
	\[
	q^{H \otimes H / 2} \circ 1 \otimes E (v_i \otimes w_j) = q^{H \otimes H / 2} v_i \otimes E w_{j} = q^{\lambda^v_i (\lambda^w_j +2) / 2} v_i \otimes E w_{j}.
	\]
	A similar calculation shows that $E \otimes 1 \circ q^{H \otimes H / 2} = q^{H \otimes H / 2} \circ E \otimes K^{-1}$. Using these two equalities, $E$-linearity of $c_{V,W}$ reduces to the equality
	\[
	(E \otimes K^{-1} + 1 \otimes E) \exp_q^<(\{1\} E \otimes F) = \exp_q^<(\{1\} E \otimes F) (E \otimes K + 1 \otimes E)
	\]
	in $\sunrolled \otimes \sunrolled$, which is proved in \cite[Equation A.10]{ohtsuki2002quantum}. Linearity of $F$ is proved similarly.
\end{proof}
	
\begin{lemma}\label{invertible}
	The map $c_{V,W}$ is invertible.
\end{lemma}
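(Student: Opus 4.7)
The plan is to factor $c_{V,W}$ as a composition of three linear isomorphisms and check each factor separately. Concretely, $c_{V,W} = \tau \circ q^{H \otimes H/2} \circ \exp_q^<(\{1\}\, E \otimes F)$, so it suffices to show that each of these three endomorphisms is invertible as a $\C$-linear map. $\sunrolled$-linearity of the inverse is then automatic from \cref{homomorphism}.

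First I would deal with the easy factors. The swap $\tau$ is obviously bijective (its inverse is the swap in the opposite direction). The operator $q^{H \otimes H / 2}$ is diagonal in the weight basis $\{v_i \otimes w_j\}$ with eigenvalues $q^{\lambda_i^v \lambda_j^w / 2} = e^{\pi \sqrt{-1}\, \lambda_i^v \lambda_j^w / (2r)}$; since the exponential never vanishes, all eigenvalues are non-zero, so $q^{H \otimes H / 2}$ is invertible (with inverse the analogously defined $q^{-H \otimes H / 2}$).

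The remaining step is to invert $\exp_q^<(\{1\}\, E \otimes F)$. The key observation is that $E^r = 0$ in $\sunrolled$, so the operator $E \otimes F$ acts nilpotently on $V \otimes W$ (in fact $(E \otimes F)^r = E^r \otimes F^r = 0$). Consequently
\[
\exp_q^<(\{1\}\, E \otimes F) = \id_{V \otimes W} + \sum_{l=1}^{r-1} \frac{\{1\}^{2l}}{\{l\}!} q^{l(l-1)/2} E^l \otimes F^l = \id_{V \otimes W} + N
\]
with $N$ nilpotent, and the geometric series $\sum_{k \geq 0}(-N)^k$ is a finite sum providing an explicit two-sided inverse.

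There is really no serious obstacle here: the whole argument rests on the nilpotency of $E$ and $F$ built into $\sunrolled$, together with the non-vanishing of complex exponentials. If one wanted a closed form, one could verify that $R^{-1}$ agrees with an analogous truncated $q$-exponential with $q$ replaced by $q^{-1}$ and sign flipped, but this is unnecessary for the statement at hand.
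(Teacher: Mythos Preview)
Your proof is correct. Both you and the paper use the same factorization $c_{V,W} = \tau \circ q^{H \otimes H/2} \circ \exp_q^<(\{1\}\,E\otimes F)$ and dispose of $\tau$ and $q^{H\otimes H/2}$ in the same way. The difference is in how the truncated exponential is inverted: you observe that $(E\otimes F)^r = 0$, so this operator has the form $\id + N$ with $N$ nilpotent and is inverted by the finite geometric series. The paper instead exhibits an explicit inverse, $\exp_{q^{-1}}^<(-\{1\}\,E\otimes F)$, and checks by direct expansion that the product with $\exp_q^<(\{1\}\,E\otimes F)$ collapses to the identity via the $q$-binomial identity $\sum_{l=0}^{i}(-1)^l\qbinom{i}{l}q^{l(i-1)} = \delta_{i,0}$. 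Your route is shorter and avoids this combinatorial step; the paper's route yields a closed form for $R^{-1}$, which is what one actually needs when evaluating the Reshetikhin--Turaev functor on negative crossings.
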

\begin{proof}
	Clearly $\tau$ is invertible. We claim that the inverse of $R$ is
	\[
	R^{-1} = \exp_{q^{-1}}^< (-\{1\} E \otimes F) q^{-H \otimes H /2}.
	\]
    Compare with \cite[\S A.2]{ohtsuki2002quantum}.
	We have $q^{H \otimes H /2} \circ q^{-H \otimes H /2}= 1$.
	By definition,
	\begin{multline*}
		\exp_q^<(\{1\} E \otimes F) \cdot \exp_{q^{-1}}^<(-\{1\} E \otimes F) \\ = \sum_{l=0}^{r-1} \sum_{k=0}^{r-1} \frac{q^{l (l-1) / 2} q^{-k (k-1) / 2}}{[l]![k]!} (-1)^k (\{1\} E \otimes F)^{l+k}.
	\end{multline*}
	Since $(E\otimes F)^r=0$, the double sum is
	\[
    \sum_{i=0}^{r-1} \frac{q^{-i (i-1) / 2}}{[i]!} (-\{1\} E \otimes F)^{i} \sum_{l=0}^{i} (-1)^l \qbinom{i}{l} q^{l (i-1)} .
	\]
	The sum $\sum_{l=0}^{i} (-1)^l \qbinom{i}{l} q^{l (i-1)}$ is $0$ for $i > 0$ and $1$ if $i=0$; see \cite[\S 0.2]{jantzen}. The inverse of $R$ is thus as stated.
\end{proof}
	
\begin{proposition}\label{ribbon}
	The maps $\{c_{V,W}:V\otimes W \rightarrow W\otimes V\}_{V,W}$ define a braiding on $\mathcal{C}$.
\end{proposition}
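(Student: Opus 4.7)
By \cref{homomorphism} and \cref{invertible}, each $c_{V,W}$ is an invertible morphism in $\mathcal{C}$, so it remains to verify that $\{c_{V,W}\}_{V,W \in \mathcal{C}}$ is natural in both arguments and satisfies the two hexagon axioms of a braided monoidal category. Naturality is immediate from the construction: if $f: V \rightarrow V^{\prime}$ and $g: W \rightarrow W^{\prime}$ are morphisms in $\mathcal{C}$, then $f$ and $g$ preserve weight spaces and so commute with the Cartan factor $q^{H \otimes H / 2}$ (each side rescales the weight component of weights $(\lambda, \mu)$ by the same scalar $q^{\lambda \mu / 2}$). The remaining factor $\exp_q^<(\{1\} E \otimes F)$ is the action of an element of $\sunrolled \otimes \sunrolled$ and hence commutes with $f \otimes g$ as well; combined with the manifest naturality of the swap $\tau$, this gives $(g \otimes f) \circ c_{V,W} = c_{V^{\prime}, W^{\prime}} \circ (f \otimes g)$.

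The two hexagon axioms reduce, in the standard way, to the quasi-triangularity identities
\[
(\Delta \otimes \id)(R) = R_{13} R_{23}, \qquad (\id \otimes \Delta)(R) = R_{13} R_{12},
\]
interpreted as equalities of $\mathbb{C}$-linear operators on triple tensor products $U \otimes V \otimes W$ of objects of $\mathcal{C}$, where the subscripts indicate the tensor factors on which each copy of $R$ acts. My plan is to verify these two identities by splitting $R$ into its Cartan factor $q^{H \otimes H / 2}$ and its truncated $q$-exponential factor and treating them separately. For the Cartan factor, $\Delta(H) = H \otimes 1 + 1 \otimes H$ yields $(\Delta \otimes \id)(q^{H \otimes H / 2}) = q^{H_1 H_3 / 2} q^{H_2 H_3 / 2}$, and analogously for the other coproduct. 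For the $q$-exponential factor, one expands $\Delta(E^l)$ and $\Delta(F^l)$ via the $q$-binomial theorem applied to the quasi-commuting summands of $\Delta(E) = 1 \otimes E + E \otimes K$ and $\Delta(F) = F \otimes 1 + K^{-1} \otimes F$; because $E^r = F^r = 0$, only finitely many terms contribute and the truncated and formal $q$-exponentials produce the same operator on any object of $\mathcal{C}$. After commuting the Cartan factor past the powers of $E$ and $F$ using the relations already invoked in the proof of \cref{homomorphism}, the two identities follow term by term, essentially as in the formal $\hbar$-adic calculation recorded in \cite[\S 10]{drinfeld1986quantum} and \cite[\S A.2]{ohtsuki2002quantum}.

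The hard part will be the bookkeeping in the two $R$-matrix identities, in particular the careful application of the $q$-binomial theorem to the non-cocommutative coproducts of $E^l$ and $F^l$ and the tracking of the various $K$-twists that appear when these coproducts are composed with $q^{H \otimes H / 2}$. Conceptually, however, no new phenomena arise at a root of unity: the nilpotency $E^r = F^r = 0$ is precisely what makes the truncation $\exp_q^<$ compatible with the formal case, so the standard quantum-group computation transfers without modification to $\sunrolled$.
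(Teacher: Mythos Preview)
Your outline is correct and amounts to the same computation the paper carries out. The only difference is organizational: you strip off the swap $\tau$ and phrase the hexagon axioms as the quasi-triangularity identities $(\Delta \otimes \id)(R) = R_{13}R_{23}$ and $(\id \otimes \Delta)(R) = R_{13}R_{12}$ for the operator $R$, whereas the paper verifies one hexagon axiom directly on weight vectors $v \otimes w \otimes u$ by expanding $\Delta(F^l)$ via the $q$-binomial formula and comparing the coefficients of $F^a w \otimes F^b u \otimes E^{a+b} v$ on both sides. Your plan leads to exactly the same term-by-term comparison; the paper simply writes it out in full rather than deferring to the $\hbar$-adic references.
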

\begin{proof}
	\cref{homomorphism,invertible} yield that the maps $c_{V,W}$ give a family of isomorphisms in $\mathcal{C}$. Naturality of $c_{V,W}$  follows from the fact that the endomorphism $q^{H \otimes H /2}$ commutes with $\sunrolled$-linear maps. 
	It remains to verify the hexagon axioms \cite[Definition 8.1.1]{etingof2016tensor}. Let $V,W,U \in \mathcal{C}$. We prove that
	\begin{equation}
		c_{V, W \otimes U} = (\id_W \otimes c_{V,U})\circ (c_{V,W} \otimes \id_U)
	\end{equation}
	and leave the verification of the equality $c_{V \otimes W, U} = (c_{V,U} \otimes \id_W)\circ (\id_V \otimes c_{W,U})$ to the reader. Let $v \in V, w \in W, u \in U$ be weight vectors of weights $\lambda^v,\lambda^w$, $\lambda^u$, respectively.
	We compute
	\[ 
	c_{V, W \otimes U}(v\otimes w \otimes u) = q^{H \otimes H / 2} \sum_{l=0}^{r-1} \frac{\{1\}^{2l}}{\{l\}!}q^{l(l-1)/2} \Delta(F^l) (w \otimes u) \otimes E^lv. 
	\]
	A straightforward induction argument shows that
	\[
	\Delta(F^l) = \sum_{i=0}^l q^{i(l-i)} \qbinom{l}{i} F^i K^{-(l-i)} \otimes F^{l-i}.
	\]
	Compare with \cite[\S 3.1]{jantzen}, where slightly different conventions are used. Using this, we find
	\[
	c_{V, W \otimes U}(v\otimes w \otimes u) = q^{H \otimes H / 2} \sum_{l=0}^{r-1} \frac{\{1\}^{2l}}{\{l\}!}q^{l(l-1)/2} \sum_{i=0}^l q^{(i- \lambda^w)(l-i)}\qbinom{l}{i} F^i \otimes F^{l-i} \otimes E^l (w \otimes u \otimes v)
	\]
    which evaluates to
	\[
	\sum_{l=0}^{r-1} \frac{\{1\}^{2l}}{\{l\}!}q^{l(l-1)/2} \sum_{i=0}^l q^{(\lambda^w + \lambda^u -2l) (\lambda^v + 2l)/2} q^{(i- \lambda^w)(l-i)}\qbinom{l}{i} F^i \otimes F^{l-i} \otimes E^l (w \otimes u \otimes v).
	\]
	On the other hand, we compute
	\begin{align*}
		c_{V,W} \otimes \id_U (v\otimes w \otimes u) &=  q^{H \otimes H /2} \sum_{l=0}^{r-1} \frac{\{1\}^{2l}}{\{l\}!}q^{l(l-1)/2} F^n w \otimes E^n v\otimes u \\
		&=  \sum_{l=0}^{r-1} \frac{\{1\}^{2l}}{\{l\}!}q^{l(l-1)/2} q^{(\lambda^w - 2l)(\lambda^v + 2l)/2} F^l w \otimes E^l v\otimes u.
	\end{align*}
	Applying $\id_W \otimes c_{V,U}$ then gives
	\begin{multline*}
		\id_W \otimes c_{V,U} (\sum_{l=0}^{r-1} \frac{\{1\}^{2l}}{\{l\}!}q^{l(l-1)/2} q^{(\lambda^w - 2l)(\lambda^v + 2l)/2} F^l w \otimes E^l v\otimes u) \\
	    =\sum_{l=0}^{r-1} \sum_{k=0}^{r-1} \frac{\{1\}^{2l +2k}}{\{l\}!\{k\}!} q^{l(l-1)/2}q^{k(k-1)/2} q^{(\lambda^w - 2l)(\lambda^v + 2l)/2} \cdot \\ \cdot q^{(\lambda^u - 2k)(\lambda^v + 2l + 2k)/2} \cdot (F^l w \otimes F^k u \otimes E^k E^l v).
	\end{multline*}
	To check the equality $c_{V, W \otimes U}(v\otimes w \otimes u) = c_{V,W} \otimes \id_U (v\otimes w \otimes u)$, we compare the coefficients of $F^a w \otimes F^b u \otimes E^{a+b} v$. The coefficients on the left and right-hand sides of the desired equality are
	\[
	\frac{\{1\}^{2(a+b)}}{\{a+b\}!}q^{(a+b)((a+b)-1)/2} q^{(\lambda^w + \lambda^u -2(a+b)) (\lambda^v + 2(a+b))/2} q^{(a- \lambda^w)((a+b)-a)}\qbinom{a+b}{a}
	\]
	and
	\[
	\frac{\{1\}^{2a +2b}}{\{a\}!\{b\}!} q^{a(a-1)/2} q^{b(b-1)/2} q^{(\lambda^w - 2a)(\lambda^v + 2a)/2} q^{(\lambda^u - 2b)(\lambda^v + 2a + 2b)/2},
	\]
	respectively, which are equal by direct verification. \qedhere
\end{proof}

\subsection{Ribbon structure}
	
In this section, we construct a ribbon structure on $\mathcal{C}$. Having already established that $\mathcal{C}$ is braided (\cref{ribbon}), a ribbon structure is the additional data of a \emph{twist}, that is, a natural automorphism of the identity functor $\theta: \id_{\mathcal{C}} \Rightarrow \id_{\mathcal{C}}$ which satisfies the \emph{balancing condition}
$$\theta_{V \otimes W} = (\theta_V \otimes \theta_W) \circ c_{W,V} \circ c_{V,W}$$ and the \emph{ribbon condition} 	
\begin{equation}\label{ribCond}
    (\theta_V)^\vee = \theta_{V^\vee}
\end{equation}
for all $V,W \in \mathcal{C}$
	
Recall that the right partial trace of $f \in \End_{\mathcal{C}}(V \otimes W)$ is the endomorphism $\ptr_R(f) \in \End_{\mathcal{C}}(V)$ defined by
\[
V \xrightarrow{\id_V \otimes \coev_W} V \otimes W \otimes W^\vee \xrightarrow{f \otimes \id_{W^\vee}} V \otimes W \otimes W^\vee \xrightarrow{\id_V \otimes \widehat{\ev}_W} V.
\]
Define a natural automorphism $\theta: \id_{\mathcal{C}} \Rightarrow \id_{\mathcal{C}}$ by
\begin{equation} \label{eq:defTheta}
	\theta_V := \ptr_R(c_{V,V}), \qquad V \in \mathcal{C}
\end{equation}
where $c$ is the braiding of $\mathcal{C}$. The hexagon axioms of the braiding ensure that $\theta$ satisfies the balancing condition. To verify that $\theta$ also satisfies the ribbon condition, we use the following generic extension result.
	
\begin{theorem}[{\cite[Theorem 9]{geer_2017}}] \label{extendTwist}
	Let $\mathcal{D}$ be a generically semisimple pivotal braided category. Define a natural automorphism $\theta: \id_{\mathcal{D}} \Rightarrow \id_{\mathcal{D}}$ so that its components are given by Equation \eqref{eq:defTheta}. If $\theta_V^{\vee} = \theta_{V^{\vee}}$ for any generic simple object $V \in \mathcal{D}$, then $\theta$ is a twist on $\mathcal{D}$.
\end{theorem}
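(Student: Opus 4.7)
The plan is to verify that $\theta$, defined by $\theta_V = \ptr_R(c_{V,V})$, satisfies both the balancing and ribbon conditions. The balancing condition $\theta_{V \otimes W} = (\theta_V \otimes \theta_W) \circ c_{W,V} \circ c_{V,W}$ holds formally in any pivotal braided category with $\theta$ defined via the right partial trace of the self-braiding: drawing both sides as ribbon graphs with two strands undergoing a full twist, the hexagon axioms reduce the identity to an isotopy of tangles, and this step uses neither generic semisimplicity nor the hypothesis of the theorem. The naturality of $\theta$ in $V$ is similarly formal, following from naturality of the braiding together with functoriality of the partial trace.

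The substance of the theorem is the extension of the ribbon condition $\theta_V^\vee = \theta_{V^\vee}$ from generic simples to all objects. The first observation is that any object $U$ in a semisimple homogeneous component $\mathcal{D}_g$ with $g \in G \setminus X$ decomposes as a direct sum of generic simples; since $\theta$ and duality respect direct sums, the ribbon condition automatically holds at every such $U$. For a general $V \in \mathcal{D}_g$ (possibly with $g \in X$), I would exploit smallness of $X$ to choose $g_0 \in G$ with both $g_0 \notin X$ and $g_0 + g \notin X$, then pick any generic simple $W \in \mathcal{D}_{g_0}$. The object $V \otimes W$ then lies in the semisimple degree $g + g_0$, so the ribbon condition holds for $V \otimes W$ by the previous step. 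Applying the balancing condition to $V \otimes W$, dualizing, and comparing with the balancing condition applied to $W^\vee \otimes V^\vee \cong (V \otimes W)^\vee$ produces two expressions involving $\theta_V^\vee \otimes \theta_W^\vee$ and $\theta_{W^\vee} \otimes \theta_{V^\vee}$ together with appropriate braidings. The ribbon condition already established for $W$ (generic simple) and for $V \otimes W$ (semisimple degree) then lets one cancel the known pieces and isolate the desired equality $\theta_V^\vee = \theta_{V^\vee}$.

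The main obstacle will be the careful bookkeeping of the pivotal identification $(V \otimes W)^\vee \cong W^\vee \otimes V^\vee$ and of the interaction of duality with the braiding, in particular identifying $c_{W^\vee, V^\vee}$ with $(c_{V,W})^\vee$ up to pivotal isomorphism. These identities are standard in the pivotal braided setting, and the monoidality $p_{V \otimes W} = p_V \otimes p_W$ of the pivotal structure is what ensures that no parasitic pivotal automorphisms appear when the identifications are composed. Writing all this out cleanly, preferably via the graphical calculus of ribbon graphs where the manipulations become transparent isotopies, is where the bulk of the technical work lies.
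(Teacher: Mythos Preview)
The paper does not prove this statement: \cref{extendTwist} is quoted from \cite[Theorem~9]{geer_2017} and invoked as a black box in the proof of \cref{ribbon theorem}. There is therefore no in-paper argument to compare your proposal against.

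Your outline is nonetheless the correct strategy, and it matches that of the cited reference: establish balancing formally from the hexagon axioms, then propagate the ribbon condition from generic simples to an arbitrary $V\in\mathcal{D}_g$ by choosing a generic simple $W$ so that $V\otimes W$ lands in a semisimple degree. The one step I would flag is ``cancel the known pieces and isolate the desired equality.'' After applying balancing to both $V\otimes W$ and $(V\otimes W)^\vee \simeq W^\vee\otimes V^\vee$, dualizing, using naturality of the braiding, and using that $\theta_W$ acts by a nonzero scalar on the simple $W$ together with $\theta_W^\vee=\theta_{W^\vee}$, one is reduced to an equation of the form $\id_{W^\vee}\otimes \theta_V^\vee = \id_{W^\vee}\otimes \theta_{V^\vee}$ in $\End_{\mathcal{D}}(W^\vee\otimes V^\vee)$. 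Deducing $\theta_V^\vee=\theta_{V^\vee}$ from this requires that $W^\vee\otimes(-)$ be faithful on morphisms. You cannot obtain this by partially tracing out $W^\vee$, because the generic simples in the categories of interest typically have vanishing quantum dimension---indeed every simple $V_\alpha$ does in the category $\mathcal{C}$ of this paper, by \cref{lemmasf}. Rather, faithfulness of tensoring by a nonzero object is part of the standing hypotheses on the ambient $\mathbb{C}$-linear tensor category in \cite{geer_2017}; for $\mathcal{C}$ it is immediate because $\otimes$ agrees with $\otimes_{\mathbb C}$ on underlying vector spaces. This step deserves to be made explicit in your write-up, as does the identification of $(c_{V,W})^\vee$ with $c_{V^\vee,W^\vee}$ under the pivotal isomorphism, which you correctly anticipate as the other piece of nontrivial bookkeeping.
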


We can now prove the main result of this section.

\begin{theorem}\label{ribbon theorem}
	The natural transformations $c$ and $\theta$ equip $\mathcal{C}$ with the structure of a ribbon category.
\end{theorem}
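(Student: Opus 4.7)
The plan is to invoke \cref{extendTwist}, which reduces the problem to a single computation on generic simple objects. The preconditions are in hand: \cref{ribbon} shows $\mathcal{C}$ is braided, the lemma preceding \cref{sec:simpObj} shows $\mathcal{C}$ is pivotal, and \cref{Cisgss} shows that $\mathcal{C}$ is generically semisimple with small symmetric subset $\Z/2\Z \subset \C/2\Z$. The balancing condition for $\theta_V := \ptr_R(c_{V,V})$ is a standard diagrammatic consequence of the hexagon axioms, as mentioned in the discussion preceding \eqref{eq:defTheta}, so only the ribbon condition \eqref{ribCond} remains to be verified.

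By \cref{simplemodules} and \cref{Cisgss}, the generic simple objects of $\mathcal{C}$ are the Verma modules $V_\alpha$ with $\alpha \in \C \setminus \Z$. For such $V_\alpha$, both $\theta_{V_\alpha}$ and $\theta_{V_\alpha^\vee}$ are scalars, so by the isomorphism $V_\alpha^\vee \xrightarrow{\sim} V_{-\alpha}$ from \eqref{ValphaDual}, the ribbon condition reduces to the equality
\[
\langle \theta_{V_\alpha} \rangle = \langle \theta_{V_{-\alpha}} \rangle.
\]
It therefore suffices to compute the scalar $\langle \theta_{V_\alpha} \rangle$ explicitly and observe that it is symmetric in $\alpha$.

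The computation is straightforward because all the complexity of $R$ collapses on the highest weight vector $v_0 \in V_\alpha$. Indeed, evaluating
\[
\theta_{V_\alpha}(v_0) = (\id \otimes \widehat{\ev}_{V_\alpha}) \circ (c_{V_\alpha, V_\alpha} \otimes \id) \circ (\id \otimes \coev_{V_\alpha})(v_0)
\]
and using that $E v_0 = 0$, only the $l=0$ term of $\exp_q^<(\{1\} E \otimes F)$ survives in $c(v_0 \otimes v_k)$, so that $c(v_0 \otimes v_k) = q^{(\alpha+r-1)(\alpha+r-1-2k)/2} v_k \otimes v_0$. The right evaluation $\widehat{\ev}_{V_\alpha}(v_0 \otimes v_k^\vee) = q^{(1-r)(\alpha+r-1)} \delta_{k,0}$ then picks out a single term in the sum $\sum_k v_0 \otimes v_k \otimes v_k^\vee$, yielding
\[
\theta_{V_\alpha}(v_0) = q^{(\alpha+r-1)^2/2 + (1-r)(\alpha+r-1)} v_0 = q^{(\alpha^2 - (r-1)^2)/2} v_0.
\]
This scalar is manifestly invariant under $\alpha \mapsto -\alpha$, so $\theta_{V_\alpha^\vee} = \theta_{V_{-\alpha}} = (\theta_{V_\alpha})^\vee$, and \cref{extendTwist} applies.

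The main obstacle is purely bookkeeping in the explicit computation of $\theta_{V_\alpha}$, which is made manageable by evaluating on $v_0$ so that only the grouplike part $q^{H\otimes H/2}$ of $R$ contributes. Without the reduction afforded by \cref{extendTwist}, the ribbon condition would have to be verified also on the non-semisimple blocks $\mathcal{C}_{\overline{\alpha}}$ with $\overline{\alpha} \in \Z/2\Z$, where $\theta_V$ need not be scalar and a direct verification would be considerably more delicate; the generic simple strategy circumvents this entirely.
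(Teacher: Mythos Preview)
Your proof is correct and follows essentially the same approach as the paper: reduce via \cref{extendTwist} to checking the ribbon condition on generic simples $V_\alpha$, compute $\theta_{V_\alpha}$ by evaluating on the highest weight vector $v_0$ so only the grouplike part of $R$ contributes, and observe that the resulting scalar $q^{(\alpha^2-(r-1)^2)/2}=q^{(\alpha+r-1)(\alpha-r+1)/2}$ is invariant under $\alpha\mapsto-\alpha$, whence $\theta_{V_\alpha^\vee}=\theta_{V_{-\alpha}}=(\theta_{V_\alpha})^\vee$ via \eqref{ValphaDual}.
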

	
\begin{proof}
	Recall that the generic simple objects of $\mathcal{C}$ are the Verma modules $V_{\alpha}$ with $\alpha \in \C \setminus \Z$. For any $\alpha \in \C$ the right partial trace of $c_{V_\alpha, V_\alpha}$ is
    \[
	V_\alpha \xrightarrow{\id_{V_\alpha} \otimes \coev_{V_\alpha}} V_\alpha \otimes V_\alpha \otimes V_\alpha^\vee \xrightarrow{c_{V_\alpha, V_\alpha} \otimes \id_{V_\alpha^\vee}} V_\alpha \otimes V_\alpha \otimes V_\alpha^\vee \xrightarrow{\id_{V_{\alpha}} \otimes \widehat{\ev}_{V_\alpha}} V_\alpha.
	\]
	Since $\End_{\mathcal{C}}(V_{\alpha}) \simeq \C$ (see the proof of \cref{verma}), it suffices to compute the image of the highest weight vector $v_{0} \in V_{\alpha}$ under this composition. We have
	\begin{multline*}
	    v_{0} \mapsto \sum_{i=0}^{r-1} v_{0} \otimes v_i \otimes v_i^{\vee}  \mapsto \sum_{i=0}^{r-1} q^{(\alpha + r - 1) (\alpha + r - 1 - 2i) /2} v_i \otimes v_0 \otimes v_i^{\vee} \\
	    \mapsto q^{(\alpha + r - 1) (\alpha + r - 1) /2} q^{(\alpha + r - 1) (1-r)} v_0
	    = q^{(\alpha + r - 1) (\alpha - r + 1) /2} v_0.
	\end{multline*}
	Since $V^\vee_\alpha \simeq V_{-\alpha}$ and the scalar $q^{(\alpha + r - 1) (\alpha - r + 1) /2}$ is unchanged under the substitution $\alpha \mapsto - \alpha$, it follows that $\theta_{V_{\alpha}^{\vee}} = \theta_{V_{\alpha}}^{\vee}$ for all $\alpha \in \C$. \cref{extendTwist} therefore applies in the present setting, allowing the conclusion that the maps $\{\theta_V\}_{V \in \mathcal{C}}$ define a twist on $\mathcal{C}$.
\end{proof}
 
\section{Reshetikhin--Turaev invariants} \label{rti}
	
We recall basic background material on Reshetikhin--Turaev invariants of links \cite{reshetikhin1990ribbon}. For a detailed introduction to the theory, the reader is referred to \cite{turaev2016quantum}. We end this section by modifying the Reshetikhin--Turaev construction to produce a non-zero invariant for knots colored by simple objects of vanishing quantum dimension. Readers who are well-versed in Reshetikhin--Turaev theory could remind themselves of Lemma \ref{cutting} and proceed to \cref{mqi}.
	
\subsection{Reshetikhin--Turaev invariants of links}
	
Let $\mathcal{D}$ be a ribbon category. Associated to $\mathcal{D}$ is the ribbon category of $\mathcal{D}$-colored ribbon graphs $\mathbf{Rib}_\mathcal{D}$ \cite[\S I.I.2]{turaev2016quantum}. Objects of $\mathbf{Rib}_\mathcal{D}$ are finite sequences of pairs $(V, \epsilon)$, where $V \in \mathcal{D}$ and $\epsilon \in \{\pm \}$. Morphisms in $\mathbf{Rib}_\mathcal{D}$ are isotopy classes of $\mathcal{D}$-colored ribbon graphs bordering two such sequences of objects. The colorings of the ribbon graphs are required to be compatible with the domain and codomain objects in the obvious sense. Composition of morphisms is defined by concatenation of ribbon graphs. The monoidal structure of $\mathbf{Rib}_\mathcal{D}$ is defined on objects by concatenation of sequences and on morphisms by disjoint union.
	
\begin{theorem}[{\cite[Theorem 2.5]{turaev2016quantum}}]\label{reshturfunctor}
     There exists a unique ribbon functor $F_{\mathcal{D}}: \mathbf{Rib}_\mathcal{D} \rightarrow \mathcal{D}$ such that $F_{\mathcal{D}}(V,+)=V$ and $F_{\mathcal{D}}(V,-) =V^\vee$ for all $V \in \mathcal{D}$. 
\end{theorem}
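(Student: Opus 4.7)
The plan is to construct $F_{\mathcal{D}}$ by decomposing each $\mathcal{D}$-colored ribbon graph into a composition and tensor product of elementary pieces, and to assign to each such piece the corresponding structural morphism of $\mathcal{D}$. First I would recall that any morphism of $\mathbf{Rib}_{\mathcal{D}}$ can be represented by a diagram obtained as a vertical stacking (composition) and horizontal juxtaposition (tensor product) of the following elementary building blocks: positive and negative crossings of two oriented strands, positive and negative framing twists, cups and caps of each orientation, and coupons labeled by morphisms of $\mathcal{D}$. To each such piece one assigns a morphism of $\mathcal{D}$ in the evident way: a positive (resp. negative) crossing of strands colored by $V,W$ goes to $c_{V,W}$ (resp. $c_{W,V}^{-1}$), twists go to $\theta_V^{\pm 1}$, cups and caps go to $\ev_V,\coev_V$ or their pivotal counterparts $\widehat{\ev}_V,\widehat{\coev}_V$ depending on orientation, and a coupon labeled by $f \in \Hom_{\mathcal{D}}(U,V)$ goes to $f$ itself. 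One then extends this assignment to the whole diagram by composing and tensoring according to the stacking.

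The crux is to verify that the resulting morphism of $\mathcal{D}$ depends only on the isotopy class of the ribbon graph, not on the chosen decomposition. Following the standard strategy of Reshetikhin--Turaev and Shum, this reduces to checking that each local move generating isotopy of framed oriented colored ribbon graphs is sent to a valid identity in $\mathcal{D}$. The relevant moves are: planar isotopy and the interchange law, which correspond to bifunctoriality of $\otimes$; sliding a coupon past a strand, which corresponds to naturality of $c$ and $\theta$; the framed Reidemeister II and III moves, which correspond to invertibility of the braiding and the hexagon axioms; the snake relations for cups and caps, which correspond to the rigidity axioms together with the compatibility of duality with the pivotal structure $p_V$; and the framed Reidemeister I move, which corresponds to the balancing condition $\theta_{V \otimes W}=(\theta_V \otimes \theta_W)\circ c_{W,V}\circ c_{V,W}$ together with the ribbon condition $\theta_V^\vee = \theta_{V^\vee}$. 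In each case the move corresponds precisely to one of the defining axioms of a ribbon category, so the ribbon structure on $\mathcal{D}$ established in \cref{ribbon theorem} ensures the assignment is consistent.

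The main obstacle is precisely this well-definedness step: passing rigorously between a topological statement about isotopy of framed ribbon graphs in $\mathbb{R}^2 \times [0,1]$ and an algebraic statement in $\mathcal{D}$. The cleanest route is to invoke the presentation theorem for $\mathbf{Rib}_{\mathcal{D}}$ which asserts that it is the free ribbon category generated by the objects of $\mathcal{D}$ (with coupons adjoining the morphisms of $\mathcal{D}$); equivalently, the elementary pieces above, modulo the moves listed, form a complete generators-and-relations presentation. Granting this presentation, the construction above defines $F_{\mathcal{D}}$ on generators and the ribbon axioms of $\mathcal{D}$ give all the relations, yielding existence. Uniqueness is then automatic: a ribbon functor extending the given assignment on objects $(V,\pm)$ is forced on every elementary piece (each is the image under a ribbon functor of a universal braiding, twist, evaluation, coevaluation, or coupon) and hence on every morphism of $\mathbf{Rib}_{\mathcal{D}}$.
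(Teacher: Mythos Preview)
The paper does not prove this theorem; it is quoted directly from Turaev's monograph with the citation \cite[Theorem 2.5]{turaev2016quantum} and no argument is given. Your outline is essentially the standard proof one finds there: present $\mathbf{Rib}_{\mathcal{D}}$ by generators (crossings, twists, cups, caps, coupons) and relations (framed Reidemeister moves, snake relations, naturality), define $F_{\mathcal{D}}$ on generators via the ribbon data of $\mathcal{D}$, and check that each relation corresponds to a ribbon-category axiom. So your approach matches the cited source rather than anything the present paper does.

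One small correction: you invoke \cref{ribbon theorem} to guarantee the ribbon axioms in $\mathcal{D}$, but \cref{reshturfunctor} is stated for an arbitrary ribbon category $\mathcal{D}$, not for the specific category $\mathcal{C}$ of weight $\sunrolled$-modules. The ribbon axioms for $\mathcal{D}$ are simply hypotheses here, and \cref{ribbon theorem} is irrelevant to this statement; it only enters later when one specializes $\mathcal{D}=\mathcal{C}$. Otherwise your sketch is accurate, with the usual caveat that the genuinely hard part---the generators-and-relations presentation of $\mathbf{Rib}_{\mathcal{D}}$---is itself a substantial topological theorem that you are (reasonably) taking as input.
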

	
The functor $F_{\mathcal{D}}$ is called the \emph{Reshetikhin--Turaev functor}. The precise definition of the ribbon structure of $\mathbf{Rib}_\mathcal{D}$ and the fact that $F_{\mathcal{D}}$ is ribbon implies that $F_{\mathcal{D}}$ takes the following values on morphisms in $\mathbf{Rib}_\mathcal{D}$:
\begin{equation*} \label{eq:7}
	F_{\mathcal{D}} \left(
	\begin{tikzpicture}[anchorbase]
		\draw[->,very thick] (0,0) -- node[left] {$V$} (0,1);
	\end{tikzpicture} \right )
	\;\;\;   =
	\begin{tikzpicture}[anchorbase]
		\draw[->] (0,0) node[below] {$V$} -- node[left] {$\id_V$} (0,1) node[above] {$V$};
	\end{tikzpicture}
	\qquad \qquad \qquad
	F_{\mathcal{D}}\left(
	\begin{tikzpicture}[anchorbase]
		\draw [->, very thick] (.8,0) to [out=270,in=0] (.6,-.3) to [out=180,in=320] (.15,.1) to [out=140,in=270] (0,.8);
		\draw [very thick,cross line] (0,-.7) to [out=90,in=180] (.6,.3) to [out=0,in=90] (.8,0);
		\node at (.3,-.7) {$V$};
	\end{tikzpicture}
	\right)  =
	\begin{tikzpicture}[anchorbase]
		\draw[->] (0,0) node[below] {$V$} -- node[left] {$\theta_V$} (0,1) node[above] {$V$};
	\end{tikzpicture}
\end{equation*}
\begin{equation*}
	F_{\mathcal{D}}\left(
	\begin{tikzpicture}[anchorbase]
    	\draw[->,very thick] (1,0) -- node[right,near start] {$W$} (0,1);
		\draw[->,very thick,cross line] (0,0) -- node[left,near start] {$V$} (1,1);
	\end{tikzpicture}
	\right)  =
	\begin{tikzpicture}[anchorbase]
        \draw[->] (0,0) node[below] {$V \otimes W$} -- node[left] {$c_{V,W}$} (0,1) node[above] {$W \otimes V$};
	\end{tikzpicture}
	\qquad \qquad
	F_{\mathcal{D}}\left(
	\begin{tikzpicture}[anchorbase]
		\draw[->,very thick] (0,0) -- node[left,near start] {$W$} (1,1);
		\draw[->,very thick,cross line] (1,0) -- node[right,near start] {$V$} (0,1);
	\end{tikzpicture}
	\right)  =
	\begin{tikzpicture}[anchorbase]
		\draw[->] (0,0) node[below] {$W \otimes V$} -- node[left] {$c^{-1}_{V,W}$} (0,1) node[above] {$V \otimes W$};
	\end{tikzpicture}
\end{equation*}
	
\begin{equation*}
	F_{\mathcal{D}}\left(
	\begin{tikzpicture}[anchorbase]
		\draw[->,very thick] (0,0)  arc (0:180:0.5 and 0.75);
		\node at (-.3,.1) {$V$};
	\end{tikzpicture}
	\right)  =
	\begin{tikzpicture}[anchorbase]
		\draw[->] (0,0) node[below] {$V^\vee \otimes V$} -- node[left] {$\ev_V$} (0,1) node[above] {$\C$};
	\end{tikzpicture}
	\qquad\qquad
	F_\mathcal{D}\left(
	\begin{tikzpicture}[anchorbase]
		\draw[<-,very thick] (0,0)  arc (180:360:0.5 and 0.75);
		\node at (.3,0) {$V$};
	\end{tikzpicture}
	\right)  =
    \begin{tikzpicture}[anchorbase]
		\draw[->] (0,0) node[below] {$\C$} -- node[left] {$\coev_V$} (0,1) node[above] {$V \otimes V^\vee$};
	\end{tikzpicture}
    \end{equation*}
    \begin{equation*}
	F_{\mathcal{D}}\left(
	\begin{tikzpicture}[anchorbase]
		\draw[<-,very thick] (0,0)  arc (0:180:0.5 and 0.75);
		\node at (-.7,.1) {$V$};
	\end{tikzpicture}
	\right)  =
	\begin{tikzpicture}[anchorbase]
		\draw[->] (0,0) node[below] {$V \otimes V^\vee$} -- node[left] {$\widehat{\ev}_V$} (0,1) node[above] {$\C$};
	\end{tikzpicture}
	\qquad\qquad
	F_\mathcal{D}\left(
	\begin{tikzpicture}[anchorbase]
		\draw[->,very thick] (0,0)  arc (180:360:0.5 and 0.75);
		\node at (.7,0) {$V$};
	\end{tikzpicture}
	\right)  =
	\begin{tikzpicture}[anchorbase]
		\draw[->] (0,0) node[below] {$\C$} -- node[left] {$\widehat{\coev}_V$} (0,1) node[above] {$V^\vee \otimes V$};
	\end{tikzpicture}
	.
\end{equation*}
The above eight morphisms in $\mathbf{Rib}_\mathcal{D}$ generate all morphisms of $\mathbf{Rib}_\mathcal{D}$ \cite[\S I.3-4]{turaev2016quantum}. In particular, the value $F_{\mathcal{D}}$ on any morphism of $\mathbf{Rib}_\mathcal{D}$ can be computed as an iterated composition of (co)evaluations, (inverse) braidings and (inverse) twists in $\mathcal{D}$. Colored framed links are particular examples of morphisms in $\mathbf{Rib}_\mathcal{D}$- they are endomorphisms of the empty sequence. Thus, the assignment $L \mapsto \langle F_{\mathcal{D}} (L) \rangle$ is a isotopy invariant of colored framed links.
	
We record the following result which will be used below.
	
\begin{lemma} \label{rotate}
	For any $V, W \in \mathcal{D}$, the following equality of morphisms in $\mathcal{D}$ holds:
	\begin{equation}\label{rotate_loop}
		\begin{aligned}
			F_\mathcal{D} \left (
			\begin{tikzpicture}[anchorbase,scale=0.8]
				\draw [very thick, postaction={decorate}, decoration={markings, mark=at position 0.4 with {\arrow{>}}}] (-.1,-.5) to [out=180,in=270] (-.8,0) to [out=90,in=180] (0,.5) to [out=0,in=90] (.8,0) to [out=270,in=0] (.1,-.5);
				\draw [very thick] (0,-1) to (0,.4);
				\draw [->, very thick] (0,.6) to (0,1);
				\node at (1.1,0) {$W$};
				\node at (.25,-.9) {$V$};
			\end{tikzpicture} \right )
			\,=\, F_\mathcal{D} \left (
			\begin{tikzpicture}[anchorbase,scale=0.8]
				\draw [very thick, postaction={decorate}, decoration={markings, mark=at position 0.65 with {\arrow{<}}}] (.1,.5) to [out=0,in=90] (.8,0) to [out=270,in=0] (0,-.5) to [out=180,in=270] (-.8,0) to [out=90,in=180] (-.1,.5);
				\draw [very thick] (0,-1) to (0,-.6);
				\draw [->, very thick] (0,-.4) to (0,1);
				\node at (1.1,0) {$W$};
				\node at (.25,-.9) {$V$};
			\end{tikzpicture} \right ).
		\end{aligned}
	\end{equation}
 Moreover, if $V$ is simple, then the following equality of scalars holds:
 \begin{equation} \label{rotate_diagram}
		\begin{aligned}
			\left \langle F_\mathcal{D} \left (\begin{tikzpicture}[anchorbase,scale=0.8]
				\draw [very thick, postaction={decorate}, decoration={markings, mark=at position 0.4 with {\arrow{>}}}] (-.1,-.5) to [out=180,in=270] (-.8,0) to [out=90,in=180] (0,.5) to [out=0,in=90] (.8,0) to [out=270,in=0] (.1,-.5);
				\draw [very thick] (0,-1) to (0,.4);
				\draw [->, very thick] (0,.6) to (0,1);
				\node at (1.1,0) {$W$};
				\node at (.25,-.9) {$V$};
			\end{tikzpicture}
			\right ) \right \rangle
			\,=\, \left \langle F_\mathcal{D} \left ( \begin{tikzpicture}[anchorbase,scale=0.8]
				\draw [very thick, postaction={decorate}, decoration={markings, mark=at position 0.4 with {\arrow{>}}}] (.1,.5) to [out=0,in=90] (.8,0) to [out=270,in=0] (0,-.5) to [out=180,in=270] (-.8,0) to [out=90,in=180] (-.1,.5);
				\draw [<-,very thick] (0,-1) to (0,-.6);
				\draw [very thick] (0,-.4) to (0,1);
				\node at (1.1,0) {$W$};
				\node at (.4,-.9) {$V$};
			\end{tikzpicture} \right ) \right \rangle.
		\end{aligned}
	\end{equation}
\end{lemma}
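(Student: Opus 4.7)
The plan is to treat the two equalities separately. Equation \eqref{rotate_loop} will follow directly from the isotopy invariance built into $F_\mathcal{D}$, while equation \eqref{rotate_diagram} additionally requires the pivotal structure of $\mathcal{D}$ and the simplicity of $V$.

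For \eqref{rotate_loop}, both ribbon graphs depict a $V$-strand linked once by an appropriately oriented $W$-loop, differing only in the height along the $V$-strand at which the linking occurs. Since the loop may be slid continuously along the strand, the two graphs are ambient isotopic as colored oriented ribbon graphs and hence represent the same morphism in $\mathbf{Rib}_\mathcal{D}$. Functoriality of $F_\mathcal{D}$, established in \cref{reshturfunctor}, then yields the equality in $\mathcal{D}$.

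For \eqref{rotate_diagram}, the $V$-strands of the two diagrams carry opposite orientations, so the graphs are not isotopic. Instead, the right-hand diagram is obtained from the left-hand one by a $180^\circ$ rotation in the plane of the page: this reverses the orientation of the $V$-strand, relocates the $W$-loop from above to below, and preserves both the over/under information at each crossing and the orientation of the $W$-loop itself. In any pivotal ribbon category, such a planar rotation of the graph representing a morphism $f: V \to V$ yields a graph representing the dual morphism $f^\vee : V^\vee \to V^\vee$. Writing $f$ for the endomorphism of $V$ determined by the left-hand diagram, it follows that the right-hand diagram determines $f^\vee \in \End_\mathcal{D}(V^\vee)$. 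Simplicity of $V$ implies simplicity of $V^\vee$ in the rigid category $\mathcal{D}$, so both endomorphism spaces are one dimensional over $\C$; writing $f = \langle f \rangle \cdot \id_V$ and applying the duality functor gives $f^\vee = \langle f \rangle \cdot \id_{V^\vee}$, which proves the asserted equality of scalars.

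The main technical point is the precise identification of a planar $180^\circ$ rotation of a ribbon graph with the duality functor applied to the associated morphism. This is a standard feature of pivotal ribbon categories but, if desired, can be verified directly by decomposing both diagrams into compositions of the elementary generators of $\mathbf{Rib}_\mathcal{D}$ and using the snake relations together with the compatibility between the right and left duality structures and the pivotal structure recorded just after the definition of $p_V$.
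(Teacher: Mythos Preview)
Your treatment of \eqref{rotate_diagram} is correct and gives a cleaner conceptual argument than the paper's. The paper proceeds by inserting a zigzag (an instance of the snake relation) at one end of the $V$-strand, performing a planar isotopy that effectively rotates the linked portion by $180^\circ$, and then factoring off the residual zigzag as the identity. Your observation that the right-hand diagram is the planar $180^\circ$ rotation of the left, together with the identification of this rotation with the duality functor in a pivotal ribbon category and the fact that $\langle f\rangle=\langle f^\vee\rangle$ for $f\in\End_\mathcal{D}(V)$ with $V$ simple, packages the same content more economically; the paper's diagram chase is essentially an unpacking of this fact.

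Your treatment of \eqref{rotate_loop}, however, has a gap. The two diagrams do \emph{not} differ merely in the height at which the loop sits on the $V$-strand. If you compare them carefully you will see that the $W$-loop carries opposite planar orientations in the two pictures, and the two crossings have their over/under roles interchanged (in the left diagram the lower crossing has $V$ over $W$, in the right diagram it is the upper crossing). Sliding the loop along the $V$-strand preserves both of these features, so no such slide relates the diagrams. They are nonetheless isotopic as framed tangles---in $3$-space one rotates the $W$-loop through $180^\circ$ about a horizontal axis---but this is not the isotopy you describe, and at the level of planar diagrams it is not a single obvious move. The paper supplies it explicitly as a sequence of framed Reidemeister moves: a pair of RI moves introducing cancelling kinks on the $W$-loop, a planar isotopy dragging one kink around, an RIII move past the $V$-strand, and a final RII. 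You should either give such a sequence or describe a precise $3$-dimensional isotopy; the bare assertion that the pictures ``differ only in height'' is not correct.
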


\begin{proof}
	\cref{rotate_loop} holds by the following indicated combination of framed Reidemeister moves:
	\begin{multline*}
			F_\mathcal{D} \left (
			\begin{tikzpicture}[anchorbase,scale=0.8]
				\draw [very thick, postaction={decorate}, decoration={markings, mark=at position 0.65 with {\arrow{<}}}] (.1,.5) to [out=0,in=90] (.8,0) to [out=270,in=0] (0,-.5) to [out=180,in=270] (-.8,0) to [out=90,in=180] (-.1,.5);
				\draw [very thick] (0,-1) to (0,-.6);
				\draw [->, very thick] (0,-.4) to (0,1);
				\node at (1.1,0) {$W$};
				\node at (.25,-.9) {$V$};
			\end{tikzpicture} \right )
			\overset{\RI}{=}
			F_\mathcal{D} \left (
			\begin{tikzpicture}[anchorbase,scale=0.8]
				\draw [very thick] (.1,.5) to [out=0,in=90] (.8,0) to (.8,-.4);
				\draw [very thick] (-.1,.5) to [out=180,in=90] (-.8,0) to (-.8,-.4);
				\draw [very thick, postaction={decorate}, decoration={markings, mark=at position 0.4 with {\arrow{>}}}] (-.8,-.6) to [out=270,in=0] (-1,-.8) to [out=180,in=270] (-1.2,-.6) to [out=90,in=180] (-1,-.5) to (1,-.5) to [out=0,in=90] (1.2,-.6) to [out=270,in=0] (1,-.8) to [out=180,in=270] (.8,-.6);
				\draw [very thick] (0,-1) to (0,-.6);
				\draw [->, very thick] (0,-.4) to (0,1);
				\node at (1.1,0) {$W$};
				\node at (.25,-.9) {$V$};
			\end{tikzpicture} \right )
			=
			F_\mathcal{D} \left (
			\begin{tikzpicture}[anchorbase,scale=0.8]
				\draw [very thick] (-.1,.5) to [out=180,in=90] (-.8,0) to (-.8,-.4);
				\draw [very thick, postaction={decorate}, decoration={markings, mark=at position 0.4 with {\arrow{>}}}] (-.8,-.6) to [out=270,in=0] (-1,-.8) to [out=180,in=270] (-1.2,-.6) to [out=90,in=180] (-1,-.5) to (.2,-.5) to [out=0,in=270] (.5,.7) to [out=90,in=180] (.7,.9) to [out=0,in=90] (.9,.7) to [out=270,in=0] (.6,.5);
				\draw [very thick] (.1,.5) to (.4, .5);
				\draw [very thick] (0,-1) to (0,-.6);
				\draw [->, very thick] (0,-.4) to (0,1);
				\node at (.86,0) {$W$};
				\node at (.25,-.9) {$V$};
			\end{tikzpicture} \right )
			 \\
			\overset{\RIII}{=} F_\mathcal{D} \left (
			\begin{tikzpicture}[anchorbase,scale=0.8]
				\draw [very thick] (-.8,-.4) to [out=90,in=180] (-.6,0);
				\draw [very thick] (-.4,0) to (-.1,0);
				\draw [very thick, postaction={decorate}, decoration={markings, mark=at position 0.62 with {\arrow{>}}}] (-.8,-.6) to [out=270,in=0] (-1,-.8) to [out=180,in=270] (-1.2,-.6) to [out=90,in=180] (-1,-.5) to (-.7,-.5) to [out=0,in=270] (-.5,.3) to [out=90,in=180](-.4,.4) to (.4,.4) to [out=0,in=90] (.6,.2) to [out=270,in=0] (.4,0) to (.1,0);
				\draw [very thick] (0,-1) to (0,.3);
				\draw [->, very thick] (0,.5) to (0,1);
				\node at (.9,0) {$W$};
				\node at (.25,-.9) {$V$};
			\end{tikzpicture} \right )
			\overset{\RII}{=}
			F_\mathcal{D} \left (
			\begin{tikzpicture}[anchorbase,scale=0.8]
				\draw [very thick, postaction={decorate}, decoration={markings, mark=at position 0.4 with {\arrow{>}}}] (-.1,-.5) to [out=180,in=270] (-.8,0) to [out=90,in=180] (0,.5) to [out=0,in=90] (.8,0) to [out=270,in=0] (.1,-.5);
				\draw [very thick] (0,-1) to (0,.4);
				\draw [->,very thick] (0,.6) to (0,1);
				\node at (1.1,0) {$W$};
				\node at (.25,-.9) {$V$};
			\end{tikzpicture} \right ).
	\end{multline*}
 If $V$ is simple then \cref{rotate_diagram} holds by planar isotopy:
	\begin{equation*}
		\begin{aligned}
		\left \langle	F_\mathcal{D} \left (
			\begin{tikzpicture}[anchorbase,scale=0.8]
				\draw [very thick, postaction={decorate}, decoration={markings, mark=at position 0.4 with {\arrow{>}}}] (-.1,-.5) to [out=180,in=270] (-.8,0) to [out=90,in=180] (0,.5) to [out=0,in=90] (.8,0) to [out=270,in=0] (.1,-.5);
				\draw [very thick] (0,-1) to (0,.4);
				\draw [->, very thick] (0,.6) to (0,1);
				\node at (1.1,0) {$W$};
				\node at (.25,-.9) {$V$};
			\end{tikzpicture} \right ) \right \rangle
			\;&=\;
			\left \langle F_\mathcal{D} \left (
			\begin{tikzpicture}[anchorbase,scale=0.8]
				\draw [very thick, postaction={decorate}, decoration={markings, mark=at position 0.4 with {\arrow{>}}}] (-.1,-.5) to [out=180,in=270] (-.8,0) to [out=90,in=180] (0,.5) to [out=0,in=90] (.8,0) to [out=270,in=0] (.1,-.5);
				\draw [very thick] (-2.4,-1) to (-2.4,.8) to [out=90,in=180] (-1.8,1) to [out=0,in=90] (-1.2,.8) to (-1.2,-.8) to [out=270,in=180] (-.6,-1) to [out=0,in=270] (0,-.8) to (0,.4);
				\draw [->, very thick] (0,.6) to (0,1);
				\node at (1.1,0) {$W$};
				\node at (.25,-.9) {$V$};
			\end{tikzpicture} \right ) \right \rangle
			\\\;&=\;
			\left \langle F_\mathcal{D} \left (
			\begin{tikzpicture}[anchorbase,scale=0.8]
				\draw [very thick, postaction={decorate}, decoration={markings, mark=at position 0.4 with {\arrow{>}}}] (.1,.5) to [out=0,in=90] (.8,0) to [out=270,in=0] (0,-.5) to [out=180,in=270] (-.8,0) to [out=90,in=180] (-.1,.5);
				\draw [->,very thick] (0,-.6) to (0,-.8) to [out=270,in=180] (.6,-1) to [out=0,in=270] (1.2,-.8) to (1.2,1);
				\draw [very thick] (0,-.4) to (0,.8) to [out=90,in=0] (-.6,1) to [out=180,in=90] (-1.2,.8) to (-1.2,-1);
				\node at (.5,.75) {$W$};
				\node at (-.9,-.9) {$V$};
			\end{tikzpicture} \right ) \right \rangle
                \\\;&=\;
                \left \langle F_\mathcal{D} \left (
			\begin{tikzpicture}[anchorbase,scale=0.8]
				\draw [very thick, postaction={decorate}, decoration={markings, mark=at position 0.4 with {\arrow{>}}}] (.1,.5) to [out=0,in=90] (.8,0) to [out=270,in=0] (0,-.5) to [out=180,in=270] (-.8,0) to [out=90,in=180] (-.1,.5);
				\draw [<-,very thick] (0,-1) to (0,-.6);
				\draw [very thick] (0,-.4) to (0,1);
				\node at (1.1,0) {$W$};
				\node at (.4,-.9) {$V$};
			\end{tikzpicture} \right ) \right \rangle
                \left \langle F_\mathcal{D} \left (
			\begin{tikzpicture}[anchorbase,scale=0.8]
				\draw [->,very thick] (0,-.6) to (0,-.8) to [out=270,in=180] (.6,-1) to [out=0,in=270] (1.2,-.8) to (1.2,1);
				\draw [very thick] (0,-.8) to (0,.8) to [out=90,in=0] (-.6,1) to [out=180,in=90] (-1.2,.8) to (-1.2,-1);
				\node at (-.9,-.9) {$V$};
			\end{tikzpicture} \right ) \right \rangle.
		\end{aligned}
	\end{equation*}
The snake relation \eqref{eq:snakeRel} implies that the second scalar in the final line is $1$.
\end{proof}
	
\subsection{Reshetikhin--Turaev invariants and quantum dimension}
	
 Let $K$ be the unknot. Color $K$ by an object $V$ of a $\mathbb{C}$-linear ribbon category $\mathcal{D}$. The scalar $\langle F_\mathcal{D}(K) \rangle$ associated to the map $F_\mathcal{D}(K): \mathbb{C} \rightarrow \mathbb{C}$ is called the \emph{quantum dimension of $V$} and is denoted by $\qdim_\mathcal{D}(V)$. Explicitly, we have $\qdim_\mathcal{D}(V) = \langle \widehat{\ev}_V \circ \coev_V \rangle$.
	
\begin{example}\label{lemmasf}
	Consider again the category $\mathcal{C}$ of weight $\sunrolled$-modules. Let $K$ be the unknot colored by $V_\alpha$, $\alpha \in \mathbb{C}$. Let $\{v_i \mid 0 \leq i \leq r-1\}$ be the weight basis of $V_\alpha$ described in \cref{sec:simpObj} with $\{v_i^\vee \mid 0 \leq i \leq r-1\}$ its dual basis. Then $F_\mathcal{C}(K)$ is the composition
	\[
	1
	\xmapsto{\coev_{V_{\alpha}}} \sum_{i=0}^{r-1} v_i \otimes v_i^\vee
	\xmapsto{\widehat{\ev}_{V_{\alpha}}}
	\sum_{i=0}^{r-1} q^{(\alpha + r - 1 -2i)(1-r)}
	=
	q^{(\alpha + r - 1)(1-r)} \sum_{i=0}^{r-1} q^{-2i + 2ir}.
	\]
	As $q$ is a primitive $2r$\textsuperscript{th} root of unity, we have $\sum_{i=0}^{r-1} q^{-2i + 2ir} = \sum_{i=0}^{r-1} q^{-2i}=0$. Hence, $F_\mathcal{C}(K) = 0$ and $\qdim_\mathcal{C}(V_\alpha)=0$. If instead $K$ is colored by the simple module $S^{lr}_n$, $0 \leq n \leq r-2$ and $l \in \mathbb{Z}$, then
	\[
	\langle F_{\mathcal{C}}(K) \rangle
	=
	\sum_{j=0}^n v_j^{\vee}(K^{1-r} v_j)
	=
	q^{(1-r)(lr+n)} \sum_{j=0}^n q^{-2j}
	=
	(-1)^{n + l + lr} [n+1],
	\]
	whence $\qdim_{\mathcal{C}}(S^{lr}_n) \neq 0$. 
\end{example}

\begin{lemma}\label{cutting}
    Let $\mathcal{D}$ be a $\mathbb{C}$-linear ribbon category, $V \in \mathcal{D}$ a simple object, $L$ a $\mathcal{D}$-colored link and $T$ a $(1,1)$-tangle whose closure is $L$ and whose open strand is colored by $V$. Then
    \begin{equation}\label{formula}
	\langle F_\mathcal{D}(L) \rangle = \qdim_{\mathcal{D}}(V) \langle F_\mathcal{D}(T) \rangle.
    \end{equation} 
\end{lemma}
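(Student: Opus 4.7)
The plan is to use functoriality of $F_\mathcal{D}$ to express $F_\mathcal{D}(L)$ in terms of $F_\mathcal{D}(T)$ and the duality morphisms, and then invoke Schur's lemma.

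First, I would observe that, at the level of ribbon graphs, $L$ is obtained from $T$ by connecting the top and bottom endpoints of the open strand by an arc disjoint from the rest of $T$. Up to isotopy in $\mathbf{Rib}_\mathcal{D}$, this arc can be drawn as a cap on the top and cup on the bottom joined by a vertical strand on the right-hand side, realizing $L$ as the composition
\[
L \;=\; \widehat{\ev}_V \circ (T \otimes \id_{V^\vee}) \circ \coev_V
\]
in $\mathbf{Rib}_\mathcal{D}$ when the open strand of $T$ is oriented upward (the downward case is handled symmetrically, with $\widehat{\ev}_V,\coev_V$ replaced by $\ev_V,\widehat{\coev}_V$, and in any case reduces to the upward case by Lemma \ref{rotate} together with isotopy). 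Applying the ribbon functor $F_\mathcal{D}$ from Theorem \ref{reshturfunctor}, which is monoidal and preserves the duality morphisms, gives
\[
F_\mathcal{D}(L) \;=\; \widehat{\ev}_V \circ (F_\mathcal{D}(T) \otimes \id_{V^\vee}) \circ \coev_V
\]
as an endomorphism of the unit object.

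Next, since $V$ is simple and $\mathcal{D}$ is $\mathbb{C}$-linear, Schur's lemma yields $\End_\mathcal{D}(V) \simeq \mathbb{C} \cdot \id_V$, so $F_\mathcal{D}(T) = \langle F_\mathcal{D}(T) \rangle \cdot \id_V$. Substituting and pulling the scalar out of composition and tensor product gives
\[
F_\mathcal{D}(L) \;=\; \langle F_\mathcal{D}(T) \rangle \cdot \bigl(\widehat{\ev}_V \circ \coev_V\bigr) \;=\; \langle F_\mathcal{D}(T) \rangle \cdot \qdim_\mathcal{D}(V) \cdot \id_\mathbb{C},
\]
where the final equality is the very definition of the quantum dimension. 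Taking $\langle \cdot \rangle$ of both sides yields the claimed identity.

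There is no serious obstacle. The one point that requires care is justifying the isotopy that puts the closure of $T$ into the standard right-closure form, but this is built into the definition of closure of a $(1,1)$-tangle in the ribbon graph category and ultimately reduces to the snake identities \eqref{eq:snakeRel} together with planar isotopies of the form used in the proof of Lemma \ref{rotate}. Everything else is a formal manipulation of scalar endomorphisms of a simple object.
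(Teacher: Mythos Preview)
Your proof is correct and follows essentially the same approach as the paper: isotope $L$ into the right-closure of $T$, apply $F_\mathcal{D}$, and use simplicity of $V$ to factor out the scalar $\langle F_\mathcal{D}(T)\rangle$ from the closure, leaving $\qdim_\mathcal{D}(V)$. The paper's proof is just a terser version of what you wrote.
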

\begin{proof}
Using isotopy invariance we can draw a diagram of $L$ of the form
\begin{equation*}
	\begin{tikzpicture}[anchorbase,scale=0.8]
		\draw [very thick] (-.5,-.25) -- (.5,-.25) -- (.5,.25) -- (-.5,.25) -- cycle;
		\draw [<-, very thick] (1.25,0) to [out=90, in=0] (.75,1) to [out=180,in = 90] (0,.25);
		\draw [very thick] (1.25,0) to [out=270, in=00] (.75,-1) to [out=180,in=270] (0,-.25);
		\node at (1.7,0) {\small $V$};
		\node at (0,0) {\small $T$};
	\end{tikzpicture}.
\end{equation*}
Since $V$ is simple, the endomorphism $F_\mathcal{D}(T)$ is a scalar and \cref{formula} follows.
\end{proof}

Thus, whenever a knot is colored by a simple object of vanishing quantum dimension, the Reshetikhin--Turaev invariant is trivial. In particular, in view of \cref{lemmasf}, the Reshetikhin--Turaev invariants of $\mathcal{C}$-colored links with at least one component colored by a simple Verma module are zero.

\subsection{Knot invariants via cutting}
	
\cref{formula} is the starting point of the theory of renormalized quantum invariants of \cite{geer_2009}. The main idea is that even though $\qdim_{\mathcal{D}}(V)$, and hence $F_{\mathcal{D}}(K)$, vanish, $F_{\mathcal{D}}(T)$ need not and may provide an interesting invariant of $K$. In graphical language, to get a non-trivial invariant of a knot $K$ we cut it to obtain a $(1,1)$-tangle $T$ and apply the standard Reshetikhin--Turaev functor to $T$.
	
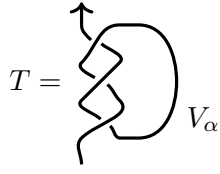
\begin{figure}
    \begin{equation*} \label{trefoil}
	T = 
	\begin{tikzpicture}[anchorbase, scale=0.5]
		\draw [very thick] (-1,0.3) to [out=90, in=225] (-1,1) to [out=45, in=315] (0,2) to [out=135, in=320] (-.32,2.42);
		\draw [very thick] (-.65, 2.6) to [out=150, in=225] (-1,3) to [out=45, in=180] (0,4) to [out=0, in=180] (.5,4) to [out=0, in=90] (1.5, 2.5) to [out=270, in=0] (.5,1) to [out=180, in=0] (0,1) to [out=170, in=320] (-.25,1.32);
		\draw [very thick] (-.5,1.5) to [out=135, in=225] (-1,2) to [out=45, in=225] (0,3) to [out=45, in=320] (-.48,3.5);
		\draw [->,very thick] (-.75,3.6) to [out=135, in=270] (-1,4) to (-1,4.6);
		\node at (2.2,1.5) {$V_\alpha$};.
    \end{tikzpicture}
	\end{equation*}
    \caption{A $(1,1)$-tangle $T$ whose closure is the right-handed trefoil.}
    \label{fig:trefoiltangle}
\end{figure}

\begin{example}
\label{ex:cutTrefoil}
	Let $K$ be the right-handed trefoil knot colored by a Verma module $V_{\alpha} \in \mathcal{C}$. \cref{lemmasf} shows that $\qdim_{\mathcal{C}}(V_{\alpha})=0$. It follows from \cref{cutting} that $F_{\mathcal{C}}(K) = 0$.
	
	Let $T$ be the $(1,1)$-tangle pictured in \cref{fig:trefoiltangle}. The closure of $T$ is $K$. The endomorphism $F_{\mathcal{C}}(T) \in \End_{\mathcal{C}}(V_{\alpha})$ is the composition
	\[
		V_{\alpha} \xrightarrow{\id_{V_{\alpha}} \otimes \coev_{V_{\alpha}}} V_{\alpha} \otimes V_{\alpha} \otimes V_{\alpha}^{\vee} \xrightarrow[]{(c_{V_{\alpha},V_{\alpha}} \otimes \id_{V_{\alpha}^{\vee}})^{\circ 3}} V_{\alpha} \otimes V_{\alpha} \otimes V_{\alpha}^{\vee} \xrightarrow[]{\id_{V_{\alpha}} \otimes \widehat{\ev}_{V_{\alpha}}} V_{\alpha}.
	\]
	Since $V_{\alpha}$ is highest weight, $F_{\mathcal{C}}(T)$ is determined by its value on a highest weight vector $v_0 \in V_{\alpha}$. Using the explicit form of the braiding, we compute
	\[
    \langle F_{\mathcal{C}}(T) \rangle
    =
    q^{\frac{3}{2}(\alpha+r-1)^2+(\alpha+r-1)(1-r)} \sum_{i=0}^{r-1} q^{i(-3\alpha -r+1)}\prod_{j=0}^{i-1} \{i-j-\alpha\}.
    \]
    For example, when $r=2$ and $\alpha=2$, this specializes to $\langle F_{\mathcal{C}}(T) \rangle = - 3 e^{\frac{\pi \sqrt{-1}}{4}} \neq 0$.
\end{example}
	
\begin{proposition} \label{modified_invariant}
    The assignment $K \mapsto \langle F_\mathcal{D}(T) \rangle$, where $K$ is a colored framed knot and $T$ is a $(1,1)$-tangle whose closure is $K$, is a well-defined invariant of colored framed knots.
\end{proposition}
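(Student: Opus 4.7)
The plan is to separate the proposition into two claims: well-definedness of the scalar $\langle F_\mathcal{D}(T)\rangle$ under the choice of $(1,1)$-tangle representation $T$, and invariance under the framed isotopy class of $K$. Framed isotopy invariance follows immediately from well-definedness together with the fact that $F_\mathcal{D}$ is an isotopy invariant of ribbon graphs (\cref{reshturfunctor}): given a framed isotopy $K \simeq K'$ and a $(1,1)$-tangle $T$ with closure $K$, the same ambient isotopy produces a $(1,1)$-tangle $T'$ with closure $K'$ which is isotopic to $T$, so $\langle F_\mathcal{D}(T)\rangle = \langle F_\mathcal{D}(T')\rangle$. The task therefore reduces to showing that $\langle F_\mathcal{D}(T_1)\rangle = \langle F_\mathcal{D}(T_2)\rangle$ for any two $(1,1)$-tangles $T_1, T_2$ colored by the same simple $V$ whose closures coincide as framed knots.

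The key algebraic input is a cyclic identity: for any $(1,1)$-tangles $A, B \colon V \to V$, the endomorphisms $F_\mathcal{D}(A), F_\mathcal{D}(B) \in \End_\mathcal{D}(V) \simeq \mathbb{C}$ are scalars and therefore commute, so
\[
\langle F_\mathcal{D}(B \circ A)\rangle = \langle F_\mathcal{D}(A \circ B)\rangle.
\]
Geometrically, two $(1,1)$-tangle presentations of the same knot differ by the choice of where to cut, and sliding the cut point along a featureless segment of a diagram of $K$ realises precisely the rewriting $B \circ A \leftrightarrow A \circ B$, with $A$ and $B$ the two sides of the cut viewed as endomorphisms of $V$.

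The remaining task, which I view as the main geometric obstacle, is to argue that any two cuts of $K$ can be connected by a finite sequence of such elementary slides interleaved with tangle isotopies that fix the endpoints. I would handle this by fixing an oriented diagram of $K$ and using tangle isotopies, which are absorbed by $F_\mathcal{D}$, to move the cut points of both $T_1$ and $T_2$ onto a common $V$-colored vertical strand of the diagram containing no crossings, caps, cups or twists between them. On such a strand the two sides of each cut are genuinely endomorphisms of $V$ in $\Rib_\mathcal{D}$, the cyclic identity from the previous paragraph applies, and one concludes $\langle F_\mathcal{D}(T_1)\rangle = \langle F_\mathcal{D}(T_2)\rangle$, completing the proof.
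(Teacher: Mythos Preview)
Your approach is more elaborate than the paper's, which is a one-liner: the paper simply cites the standard topological fact that two connected $(1,1)$-tangles are isotopic as framed tangles if and only if their closures are isotopic as framed knots (the long knot/closed knot correspondence), and then invokes \cref{reshturfunctor}. So for the paper the scalar agrees because the tangles themselves agree.

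Your algebraic observation---that $F_{\mathcal{D}}(A)$ and $F_{\mathcal{D}}(B)$ commute because $\End_{\mathcal{D}}(V)\simeq\mathbb{C}$---is correct, but the geometric reduction you sketch is where the real content lies, and that step is not fully justified. A tangle isotopy fixes the endpoints, so it cannot ``move the cut point'' along $K$; what you would actually need is that an arbitrary $(1,1)$-tangle $T_2$ with closure $K$ can be isotoped to the form $A\circ B$ once $T_1$ is written as $B\circ A$ with the interface at $p_2$. Arranging a horizontal level that meets $T_1$ only at $p_2$ is itself the delicate point---for a generic interior point on a knotted long knot this is not automatic---and the standard way to see it is precisely the long-knot fact the paper cites. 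In other words, your last paragraph implicitly uses the very statement the paper invokes, after which the commutativity of scalars becomes superfluous: once $T_1\simeq T_2$ as tangles you get $F_{\mathcal{D}}(T_1)=F_{\mathcal{D}}(T_2)$ outright.

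So your argument is not wrong in spirit, but it does not genuinely avoid the topological input; and if you grant that input, the cyclic trick is unnecessary. The commutativity idea does become essential later (for links, where different components carry different simple colors and the tangles are \emph{not} isotopic)---that is exactly the role of ambidexterity in \cref{mqi}.
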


\begin{proof}
This follows from \cref{reshturfunctor} and the standard fact that two connected $(1,1)$-tangles $T$ and $T^{\prime}$ are isotopic if and only if their closures are framed isotopic knots.
\end{proof}
	
\section{Renormalized Reshetikhin--Turaev invariants of $\mathcal{C}$}\label{mqi}
	
We henceforth restrict attention to the ribbon category $\mathcal{C}$ of weight $\sunrolled$-modules and write $F$ for the Reshetikhin--Turaev functor $F_{\mathcal{C}}$.
	
\subsection{Ambidextrous modules} 
	
The idea of constructing a non-zero invariant from a knot by cutting to obtain a $(1,1)$-tangle does not immediately extend to links, as the following example shows.
 
\begin{example}\label{hopfLink}
    Let $\alpha,\beta \in \C$ and $L$ the Hopf link with components colored by $V_{\alpha}$ and $V_{\beta}$. Up to isotopy, there are two choices of how to cut $L$. Cutting the strand colored by $V_\alpha$ gives
    \begin{equation*}
		F\left(
		\begin{tikzpicture}[anchorbase,scale=0.8]
			\draw [very thick, postaction={decorate}, decoration={markings, mark=at position 0.4 with {\arrow{>}}}] (-.1,-.5) to [out=180,in=270] (-.8,0) to [out=90,in=180] (0,.5) to [out=0,in=90] (.8,0) to [out=270,in=0] (.1,-.5);
			\draw [very thick] (0,-1) to (0,.4);
			\draw [->, very thick] (0,.6) to (0,1);
			\node at (1.2,0) {$V_\beta$};
			\node at (.45,-.9) {$V_\alpha$};
		\end{tikzpicture}
		\right) =     \begin{cases} q^{\beta\alpha} \frac{\{\alpha r\}}{\{\alpha\}} \cdot  \id_{V_\alpha} & \mbox{if } \alpha \in \mathbb{C}\setminus r\Z, \\ q^{\beta rz}  \cdot (-1)^{(r + 1)z} r \cdot  \id_{V_\alpha}& \mbox{if } \alpha = rz \in r\Z.  \end{cases}
	\end{equation*} 
	Indeed, the map defined by the above tangle is the composition
	\begin{equation*}
		V_\alpha \xrightarrow{\coev_{V_{\beta}}} V_\alpha \otimes V_\beta \otimes V_\beta^\vee \xrightarrow{c_{V_{\alpha},V_{\beta}}} V_\beta \otimes V_\alpha \otimes V_\beta^\vee \xrightarrow{c_{V_{\beta},V_{\alpha}}} V_\alpha \otimes V_\beta \otimes V_\beta^\vee \xrightarrow{\widehat{\ev}_{V_{\beta}}} V_\alpha .
	\end{equation*}
	As in the proof of \cref{ribbon theorem}, it suffices to compute the image under this map of a highest weight vector $v_0 \in V_\alpha$. Let $\{w_i \mid 0 \leq i \leq r-1\}$ be a weight basis of $V_\beta$ with dual basis $\{w_i^\vee \mid 0 \leq i \leq r-1\}$. Then we have under the above composition
	\begin{multline*}
		v_0 \xmapsto{\id \otimes \coev_{V_\beta}} \sum_{i=0}^{r-1} v_0 \otimes w_i \otimes w_i^\vee
		\xmapsto{c_{V_{\alpha},V_{\beta}} \otimes \id} \sum_{i=0}^{r-1} q^{(\alpha + r - 1)(\beta + r - 1 - 2i)/2} w_i \otimes v_0 \otimes w_i^\vee
		\xmapsto{c_{V_{\beta},V_{\alpha}} \otimes \id}  \\ \sum_{i=0}^{r-1} q^{(\alpha + r - 1)(\beta + r - 1 - 2i)}v_0 \otimes w_i \otimes w_i^\vee + \cdots
		\xmapsto{\id \otimes \widehat{\ev}_{V_\beta}} \sum_{i=0}^{r-1} q^{(\alpha + r - 1)(\beta + r - 1 - 2i)} q^{(\beta + r - 1 - 2i)(1 - r)}v_0,
	\end{multline*}
	where the omitted quantity $\cdots$ is a linear combination of terms of the form $E^j v_0 \otimes w_{i+j} \otimes w_i^{\vee}$, $j >0$, and so is in the kernel of $\id_{V_{\alpha}} \otimes \widehat{\ev}_{V_{\beta}}$. We have
	\[
		\sum_{i=0}^{r-1} q^{(\alpha + r - 1)(\beta + r - 1 - 2i)} q^{(\beta + r - 1 - 2i)(1 - r)}
    =q^{\alpha(\beta + r - 1)}\sum_{i=0}^{r-1}q^{-2\alpha i}.
	\]
	If $\alpha \notin r\Z$, then $q^{-2\alpha} \neq 1$ and the previous line evaluates to 
	\begin{equation*}
		q^{\alpha(\beta + r - 1)} \frac{1 - q^{-2\alpha r}}{1 - q^{-2\alpha}} = q^{\alpha\beta} \frac{q^{\alpha r} - q^{-\alpha r}}{q^{\alpha} - q^{-\alpha}} =q^{\alpha\beta} \frac{\{\alpha r\}}{\{\alpha\}}.
	\end{equation*}
	If instead $\alpha = rz \in r \Z$, then
    \begin{equation*}
		q^{rz(\beta + r - 1)}\sum_{i=0}^{r-1}q^{-2rzi} = q^{rz(\beta + r - 1)} r = (-1)^{rz + z} q^{\beta rz} r.
    \end{equation*}
	
	In particular, taking $r=2$ with $\alpha=0$ and $\beta=2$, we obtain
	\[
	F\left(
	\begin{tikzpicture}[anchorbase,scale=0.8]
    	\draw [very thick, postaction={decorate}, decoration={markings, mark=at position 0.4 with {\arrow{>}}}] (-.1,-.5) to [out=180,in=270] (-.8,0) to [out=90,in=180] (0,.5) to [out=0,in=90] (.8,0) to [out=270,in=0] (.1,-.5);
    	\draw [very thick] (0,-1) to (0,.4);
    	\draw [->, very thick] (0,.6) to (0,1);
    	\node at (1.2,0) {$V_2$};
    	\node at (.35,-.9) {$V_0$};
    \end{tikzpicture}
    \right)
	= 2 \id_{V_0},
	\qquad
	F\left(
	\begin{tikzpicture}[anchorbase,scale=0.8]
		\draw [very thick, postaction={decorate}, decoration={markings, mark=at position 0.4 with {\arrow{>}}}] (-.1,-.5) to [out=180,in=270] (-.8,0) to [out=90,in=180] (0,.5) to [out=0,in=90] (.8,0) to [out=270,in=0] (.1,-.5);
		\draw [very thick] (0,-1) to (0,.4);
		\draw [->, very thick] (0,.6) to (0,1);
		\node at (1.2,0) {$V_0$};
		\node at (.35,-.9) {$V_2$};
	\end{tikzpicture}
	\right)
	= -2 \id_{V_2}.
	\]
	In view of \cref{formula}, we want to attach to the Hopf link colored by $V_0$ and $V_2$ a scalar given by cutting the Hopf link open to a $(1,1)$-tangle. However, we see that the scalar depends non-trivially on which strand we choose to cut.
\end{example}
	
The following notion is the key to resolving the cutting ambiguity illustrated by the previous example.

\begin{definition}[{\cite[\S 3]{geer_2009}}] A module $V \in \mathcal{C}$ is called \emph{ambidextrous} if the equality
	\begin{equation}\label{ambiequation}
		\begin{aligned}
			F\left(
			\begin{tikzpicture}[anchorbase,scale=0.8]
				\draw [very thick] (-.5,-.25) -- (.5,-.25) -- (.5,.25) -- (-.5,.25) -- cycle;
				\draw [<-, very thick] (-1.25,0) to [out=90, in=180] (-.75,1) to [out=0,in = 90] (-.25,.25);
				\draw [very thick] (-1.25,0) to [out=270, in=180] (-.75,-1) to [out=0,in=270] (-.25,-.25);
				\draw [->, very thick] (.25,.25) to [out=90, in=230] (.5,1);
				\draw [very thick] (.25,-.25) to [out=270, in=130] (.5,-1);
				\node at (-1.7,0) {$V$};
				\node at (.8,-.9) {$V$};
				\node at (0,0) {$T$};
			\end{tikzpicture}
			\right)
			&=F\left(
			\begin{tikzpicture}[anchorbase,scale=0.8]
				\draw [very thick] (-.5,-.25) -- (.5,-.25) -- (.5,.25) -- (-.5,.25) -- cycle;
				\draw [->, very thick] (-.25,.25) to [out=90, in=310] (-.5,1);
				\draw [very thick] (-.25,-.25) to [out=270, in=50] (-.5,-1);
				\draw [<-, very thick] (1.25,0) to [out=90, in=0] (.75,1) to [out=180,in = 90] (.25,.25);
				\draw [very thick] (1.25,0) to [out=270, in=00] (.75,-1) to [out=180,in=270] (.25,-.25);
				\node at (1.7,0) {$V$};
				\node at (-.8,-.9) {$V$};
				\node at (0,0) {$T$};
			\end{tikzpicture}
			\right)
		\end{aligned}
    \end{equation}
    holds for all (2,2)-tangles $T$ whose open strands are colored by $V$.
\end{definition}
	
\begin{lemma}\label{ambiqd}
	If $V \in \mathcal{C}$ is simple with non-vanishing quantum dimension, then $V$ is ambidextrous.
\end{lemma}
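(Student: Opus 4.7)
The plan is to reduce the ambidextrous equation \eqref{ambiequation} to a single link invariant, exploiting the hypothesis $\qdim_{\mathcal{C}}(V) \neq 0$. Since $V$ is simple, both sides of \eqref{ambiequation} are scalar endomorphisms of $V$; denote the corresponding scalars by $\langle f_L \rangle$ and $\langle f_R \rangle$, respectively. Because $\qdim_{\mathcal{C}}(V) \neq 0$, to establish the claim it suffices to show the rescaled equality $\qdim_{\mathcal{C}}(V) \langle f_L \rangle = \qdim_{\mathcal{C}}(V) \langle f_R \rangle$.

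To identify these rescaled scalars with a common link invariant, I would close the remaining open $V$-strand on each side of \eqref{ambiequation} by wrapping it with $\coev_V$ at the bottom and $\widehat{\ev}_V$ at the top. By a planar isotopy similar in spirit to \cref{rotate}, both resulting closed diagrams are isotopic to the full closure $\widehat{T}$ of the $(2,2)$-tangle $T$, namely the link obtained by joining each top endpoint of $T$ to the corresponding bottom endpoint. On the other hand, for any scalar endomorphism $g = \langle g \rangle \cdot \id_V$ of $V$, the corresponding closure evaluates under $F$ to $\qdim_{\mathcal{C}}(V) \cdot \langle g \rangle$, as follows directly from the definition of $\qdim_{\mathcal{C}}(V)$ and functoriality. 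Combining these two observations yields
\begin{equation*}
\qdim_{\mathcal{C}}(V) \cdot \langle f_L \rangle \;=\; F(\widehat{T}) \;=\; \qdim_{\mathcal{C}}(V) \cdot \langle f_R \rangle,
\end{equation*}
and cancelling the non-zero factor $\qdim_{\mathcal{C}}(V)$ gives $\langle f_L \rangle = \langle f_R \rangle$, as required.

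No real obstacle arises: the argument uses only simplicity of $V$, the non-vanishing of $\qdim_{\mathcal{C}}(V)$, and the isotopy invariance of $F$. The one step requiring care is the topological identification of the two iterated closures with $\widehat{T}$, which is a routine planar isotopy of the type already employed in \cref{rotate}.
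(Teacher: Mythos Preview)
Your proposal is correct and follows essentially the same approach as the paper: close the remaining open strand of each $(1,1)$-tangle to obtain the full closure $L=\widehat{T}$, observe that this closure of a scalar endomorphism equals $\qdim_{\mathcal{C}}(V)$ times the scalar, and cancel. The only cosmetic difference is that the paper closes one side on the left and the other on the right (so both become $L$ without any isotopy), whereas you close both on the right and appeal to a planar isotopy for one of them.
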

\begin{proof}
Let $T$ be a $(2,2)$-tangle whose open strands are colored by $V$ and let $L$ be the diagram
    \begin{equation*}
		\begin{tikzpicture}[anchorbase,scale=0.8]
			\draw [very thick] (-.5,-.25) -- (.5,-.25) -- (.5,.25) -- (-.5,.25) -- cycle;
			\draw [<-, very thick] (-1.25,0) to [out=90, in=180] (-.75,1) to [out=0,in = 90] (-.25,.25);
			\draw [very thick] (-1.25,0) to [out=270, in=180] (-.75,-1) to [out=0,in=270] (-.25,-.25);
			\draw [<-, very thick] (1.25,0) to [out=90, in=0] (.75,1) to [out=180,in = 90] (.25,.25);
			\draw [very thick] (1.25,0) to [out=270, in=00] (.75,-1) to [out=180,in=270] (.25,-.25);
			\node at (-1.7,0) {$V$};
			\node at (1.7,0) {$V$};
			\node at (0,0) {$T$};
		\end{tikzpicture}.
	\end{equation*}
	Since $L$ is obtained by taking the right and left partial traces of the $(1,1)$-tangles appearing in \cref{ambiequation}, we find that both sides of this equation are equal to $ \frac{\langle F(L) \rangle}{\qdim_\mathcal{C}(V)}  \id_V$.
\end{proof}

When $V$ has vanishing quantum dimension, we need to investigate further.

\begin{lemma} \label{endocommutative}
	Let $V, W \in \mathcal{C}$ be simple objects such that $V \otimes W$ is semisimple and multiplicity free. Then the algebra $\End_{\mathcal{C}}(V \otimes W)$ is isomorphic to a direct sum of copies of $\mathbb{C}$.
\end{lemma}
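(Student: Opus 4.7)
The plan is to exploit the semisimple multiplicity-free decomposition together with Schur's lemma in the $\mathbb{C}$-linear setting.

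First, I would write $V \otimes W = \bigoplus_{i=1}^{N} S_i$, where the $S_i$ are pairwise non-isomorphic simple objects of $\mathcal{C}$; this is the combined hypothesis of semisimplicity and multiplicity-freeness. Using the additivity of $\Hom$ with respect to direct sums, I get an isomorphism of vector spaces
\[
\End_{\mathcal{C}}(V \otimes W) \;\simeq\; \bigoplus_{i,j=1}^{N} \Hom_{\mathcal{C}}(S_j, S_i).
\]
Next, I would invoke Schur's lemma: each $S_i$ is simple and finite dimensional over the algebraically closed field $\mathbb{C}$, so $\End_{\mathcal{C}}(S_i) \simeq \mathbb{C}$, and any nonzero morphism between two simple objects is an isomorphism, so $\Hom_{\mathcal{C}}(S_j,S_i) = 0$ whenever $i \neq j$. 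This collapses the double sum to $\bigoplus_{i=1}^{N} \mathbb{C}$ as vector spaces.

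To upgrade this to an algebra isomorphism, I would describe the isomorphism concretely via the canonical idempotents $e_i \in \End_{\mathcal{C}}(V \otimes W)$ given by the projection of $V \otimes W$ onto its $S_i$-summand followed by the inclusion. Since the $S_i$ are non-isomorphic simples, these idempotents are orthogonal ($e_i e_j = \delta_{ij} e_i$) and sum to $\id_{V \otimes W}$. Any $\phi \in \End_{\mathcal{C}}(V \otimes W)$ decomposes as $\phi = \sum_i e_i \phi e_i$, because the off-diagonal pieces $e_j \phi e_i$ for $i \neq j$ lie in $\Hom_{\mathcal{C}}(S_i, S_j) = 0$, and each $e_i \phi e_i$ is a scalar multiple $\lambda_i(\phi) e_i$ by $\End_{\mathcal{C}}(S_i) \simeq \mathbb{C}$. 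The assignment $\phi \mapsto (\lambda_1(\phi), \dots, \lambda_N(\phi))$ is then a $\mathbb{C}$-algebra isomorphism $\End_{\mathcal{C}}(V \otimes W) \xrightarrow{\sim} \mathbb{C}^{\oplus N}$.

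There is no real obstacle here; the only thing to be mindful of is that Schur's lemma requires $\End_{\mathcal{C}}(S_i) = \mathbb{C}$, which follows because all objects of $\mathcal{C}$ are finite dimensional over $\mathbb{C}$ by convention, so an endomorphism of a simple object has an eigenvalue and, being nonzero on the associated eigenspace, must be scalar by simplicity. Everything else is a bookkeeping argument about direct sums.
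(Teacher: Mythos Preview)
Your proof is correct and follows essentially the same approach as the paper: decompose $V \otimes W$ into pairwise non-isomorphic simples and apply Schur's lemma to conclude $\End_{\mathcal{C}}(V \otimes W) \simeq \bigoplus_i \End_{\mathcal{C}}(S_i) \simeq \mathbb{C}^N$. The paper states this in one line, while you have spelled out the algebra-isomorphism step via orthogonal idempotents and justified the hypothesis $\End_{\mathcal{C}}(S_i) = \mathbb{C}$ explicitly; these are welcome clarifications but not a different argument.
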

\begin{proof}
    Let $U_1, \ldots, U_n$ be pairwise non-isomorphic simples  such that $V \otimes W \simeq U_1 \oplus \cdots \oplus U_n$. Schur's Lemma implies algebra isomorphisms $
	\End_{\mathcal{C}}(V \otimes W)
	\simeq 
	\bigoplus_{i=1}^n \End_{\mathcal{C}}(U_i)
	\simeq
	\mathbb{C}^n$.
\end{proof}
	
\begin{lemma} \label{multiplicityfree}
	Let $\eta \in \mathbb{C} \setminus \frac{1}{2}\Z$. Then $V_\eta \otimes V_\eta \in \mathcal{C}$ is semisimple and multiplicity free.
\end{lemma}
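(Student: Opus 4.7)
The plan is to exploit the $\mathbb{C}/2\mathbb{Z}$-grading on $\mathcal{C}$ to obtain semisimplicity essentially for free, and then to identify the simple summands by an elementary weight-space count. Every weight of $V_\eta \otimes V_\eta$ lies in the coset $\overline{2\eta} \in \mathbb{C}/2\mathbb{Z}$, and the hypothesis $\eta \notin \frac{1}{2}\mathbb{Z}$ gives $2\eta \notin \mathbb{Z}$, so $\overline{2\eta} \notin \mathbb{Z}/2\mathbb{Z}$. By \cref{Cisgss}, the subcategory $\mathcal{C}_{\overline{2\eta}}$ is semisimple, and hence so is $V_\eta \otimes V_\eta$. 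Moreover, by \cref{simplemodules}(\ref{eins}) every simple object in $\mathcal{C}_{\overline{2\eta}}$ is a simple Verma module $V_\alpha$ with $\alpha \in \mathbb{C} \setminus \mathbb{Z}$, necessarily of dimension $r$.

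Next I would produce an injection into $V_\eta \otimes V_\eta$ of a distinct simple Verma module for each allowed top weight. Let $\{v_i\}_{i=0}^{r-1}$ denote the weight basis of $V_\eta$ from \cref{sec:simpObj}. For $0 \leq m \leq r-1$, the weight space $(V_\eta \otimes V_\eta)[2\eta + 2r - 2 - 2m]$ is spanned by $\{v_i \otimes v_{m-i} : 0 \leq i \leq m\}$ and has dimension $m+1$, while the adjacent weight space one step up has dimension $m$. Since $E$ maps the former into the latter, its kernel contains a non-zero vector $u_m$. This $u_m$ is a highest weight vector of weight $2\eta + 2r - 2 - 2m$, and by \cref{verma} the cyclic submodule it generates is a quotient of $V_{2\eta + r - 1 - 2m}$. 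Because $2\eta + r - 1 - 2m \notin \mathbb{Z}$ (again using $\eta \notin \frac{1}{2}\mathbb{Z}$), \cref{simplemodules}(\ref{eins}) tells us that $V_{2\eta + r - 1 - 2m}$ is already simple, so the submodule generated by $u_m$ is in fact isomorphic to $V_{2\eta + r - 1 - 2m}$.

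The $r$ simple modules $V_{2\eta + r - 1 - 2m}$ for $m = 0, \ldots, r-1$ have pairwise distinct highest weights and are therefore pairwise non-isomorphic. By the semisimplicity established above, each appears as a direct summand of $V_\eta \otimes V_\eta$, and their combined dimension $r \cdot r = r^2$ already exhausts $\dim(V_\eta \otimes V_\eta)$. One concludes
\begin{equation*}
    V_\eta \otimes V_\eta \;\simeq\; \bigoplus_{m=0}^{r-1} V_{2\eta + r - 1 - 2m},
\end{equation*}
a decomposition whose summands are pairwise distinct, so $V_\eta \otimes V_\eta$ is multiplicity free.

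The subtlest point is recognizing that the single hypothesis $\eta \notin \frac{1}{2}\mathbb{Z}$ does double duty: it simultaneously places $V_\eta \otimes V_\eta$ in a semisimple graded piece and keeps each candidate summand $V_{2\eta + r - 1 - 2m}$ off the integer locus where \cref{simplemodules}(\ref{zwei}) would force it to become reducible. Beyond this, no computation with the explicit $R$-matrix or with the coproduct of $E$ is required; the argument reduces to comparing two integers.
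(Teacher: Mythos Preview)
Your proof is correct and takes a genuinely different route from the paper's. Both arguments begin identically, invoking \cref{Cisgss} to get semisimplicity of $\mathcal{C}_{\overline{2\eta}}$. For multiplicity-freeness, however, the paper argues via formal characters in $\Q[\C]$: it writes $\ch(V_\alpha)=x^{\alpha}[r]_x$, proves that $\{\ch(V_{2\eta-r+2i+1})\}_{i=0}^{r-1}$ is linearly independent, and then observes that setting every multiplicity equal to $1$ reproduces $\ch(V_\eta)^2=x^{2\eta}[r]_x^2$, so uniqueness forces each multiplicity to be $1$. Your approach instead constructs the summands directly: a pigeonhole count on the weight spaces produces a highest weight vector $u_m$ in each of the top $r$ weights, the cyclic submodule it generates is identified with a simple Verma via \cref{simplemodules}(\ref{eins}), and a dimension count finishes. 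Your argument is more constructive and avoids the character machinery; the paper's argument avoids explicitly locating highest weight vectors and makes the bookkeeping of which Vermas can occur automatic. One small point worth making explicit in your write-up: the passage from ``each $V_{2\eta+r-1-2m}$ is a direct summand'' to ``their sum is direct'' uses that pairwise non-isomorphic simple submodules of a semisimple object lie in distinct isotypic components; this is standard, but stating it removes any appearance of a gap.
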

\begin{proof}
Since $2\eta \notin \Z$, the object $V_\eta \otimes V_\eta \in \mathcal{C}_{\overline{2\eta}}$ is semisimple by \cref{Cisgss}.
	Hence, there exist unique integers $m_{2\eta_r -r +2i+1} \in \Z_{\geq 0}$ such that
	\[
	V_\eta \otimes V_\eta \simeq \bigoplus_{i=0}^{r-1} V_{2 \eta -r +2i+1}^{\oplus m_{2\eta_r -r +2i+1}}.
	\]
	
	Consider $\Q[\mathbb{C}]$, the group algebra of $\C$, with basis $\{x^{\lambda}\}_{\lambda \in \mathbb{C}}$. The \emph{character} of $V \in \mathcal{C}$ is $\ch(V) = \sum_{\lambda \in \mathbb{C}} \dim_{\mathbb{C}}(V[\lambda]) x^\lambda$. The explicit description of $V_\alpha$ gives $\ch(V_\alpha) = x^{\alpha}[r]_x$, 	where $[r]_x = \sum_{i=0}^{r-1} x^{r-1-2i}$. We claim that the set
	\[
    \mathcal{S} = \{\ch(V_{2\eta - r + 2i + 1}) \mid 0 \leq i \leq r-1\} \subset \Q[\mathbb{C}]
	\]
	is linearly independent. Suppose that $\sum_{i=0}^{r-1} a_i \ch(V_{2\eta - r + 2i + 1}) = 0$ for some $a_i \in \Q$. Since all powers of $x$ which appear in this equation lie on the same affine real line in $\mathbb{C}$, they are naturally ordered. The largest such power is $x^{2\eta + 2(r-1)}$ with coefficient $a_{r-1}$, resulting from $\ch(V_{2\eta - r + 2(r-1) + 1})$. Hence, $a_{r-1}=0$. Continuing in this way shows that $a_{r-2}=\cdots = a_0=0$ and $\mathcal{S}$ is linearly independent.
		
	The character of $V_\eta \otimes V_\eta$ is $\ch(V_\eta)^2 = x^{2\eta}[r]_x^2$. On the other hand,
	\[
	\ch(V_\eta)^2 = \big( \sum_{i=0}^{r-1} m_{2\eta-r+2i+1}x^{2 \eta-r+2i} \big) [r]_x.
	\]
	Setting each $m_{2\eta-r+1+2i}=1$, the right-hand side of the previous equation becomes
	\[
	\left ( \sum_{i=0}^{r-1} x^{2 \eta-r+2i} \right ) [r]_x
	=
	x^{2\eta}[r]^2_x
	=
	\ch(V_\eta)^2.
	\]
	In view of the linear independence of $\mathcal{S}$, this completes the proof.
\end{proof}
	
\begin{theorem} \label{ambid}
Let $\eta \in \mathbb{C}\setminus\frac{1}{2}\Z$. Then $V_\eta$ is ambidextrous.
\end{theorem}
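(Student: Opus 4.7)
The plan is to exploit the commutativity of $\End_\mathcal{C}(V_\eta \otimes V_\eta)$ established by \cref{multiplicityfree,endocommutative}, which crucially requires the hypothesis $\eta \in \mathbb{C} \setminus \frac{1}{2}\Z$. Let $T$ be a $(2,2)$-tangle with both strands colored by $V_\eta$, and set $f := F(T) \in \End_\mathcal{C}(V_\eta \otimes V_\eta)$. Since $V_\eta$ is simple, the two sides of the ambidextrous equation \cref{ambiequation} are scalar endomorphisms of $V_\eta$, equal respectively to the left and right partial traces of $f$, viewed as elements of $\End_\mathcal{C}(V_\eta) \simeq \mathbb{C}$. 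The theorem thus amounts to the equality $\ptr_L(f) = \ptr_R(f)$.

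By commutativity, $f$ commutes with the braiding $c := c_{V_\eta, V_\eta}$. The plan is to deduce $\ptr_L(f) = \ptr_R(f)$ from this commutativity using the following general diagrammatic identity, valid in any ribbon category: for every $g \in \End(V \otimes V)$,
\[ \ptr_L(c_{V,V} \circ g \circ c_{V,V}^{-1}) = \ptr_R(g). \]
Geometrically, this identity encodes the statement that conjugating $g$ by the braiding slides the closure arc of the left partial trace around the tangle to become the closure arc of the right partial trace, with the extra braidings needed to perform the slide being supplied exactly by the conjugation. Applied to our $f$, which commutes with $c$, this gives $\ptr_L(f) = \ptr_L(c \circ f \circ c^{-1}) = \ptr_R(f)$, as required.

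The main obstacle is justifying the diagrammatic identity. This is a ribbon isotopy argument in which one must carefully track orientations, framings, and the braidings produced by sliding the closure arc around the tangle. Composing the relevant tangle diagrams and applying framed Reidemeister II cancellations, in the spirit of the proof of \cref{rotate}, identifies the left closure of $c \circ f \circ c^{-1}$ with the right closure of $f$ as $(1,1)$-tangles up to framed isotopy. Once this identity is confirmed, \cref{ambid} follows immediately.
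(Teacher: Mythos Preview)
Your proposal is correct and follows essentially the same strategy as the paper: both arguments hinge on the commutativity of $\End_{\mathcal{C}}(V_\eta\otimes V_\eta)$ (via \cref{endocommutative,multiplicityfree}) to slide $F(T)$ past the braiding, and then use a framed-isotopy argument to identify the left and right closures. The only difference is packaging: you isolate the general ribbon identity $\ptr_L(c\,g\,c^{-1})=\ptr_R(g)$ and then invoke $cfc^{-1}=f$, whereas the paper performs the same manipulation as an explicit chain of Reidemeister moves in which commutativity is used once to push the coupon $T$ through a single crossing.
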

\begin{proof}
	By \cref{endocommutative,multiplicityfree}, the algebra $\End_{\mathcal{C}}(V_\eta \otimes V_\eta)$ is commutative. Let $T$ be a $(2,2)$-tangle whose open strands are colored by $V_{\eta}$. Then we have the following sequence of equalities, where we implicitly apply $F$ to each tangle and the coupons are colored by $T$:
	\begin{multline*}
			\begin{tikzpicture}[anchorbase,scale=0.8]
				\draw [very thick] (-.5,-.25) -- (.5,-.25) -- (.5,.25) -- (-.5,.25) -- cycle;
				\draw [<-, very thick] (-1.25,0) to [out=90, in=180] (-.75,1) to [out=0,in = 90] (-.25,.25);
				\draw [very thick] (-1.25,0) to [out=270, in=180] (-.75,-1) to [out=0,in=270] (-.25,-.25);
				\draw [->, very thick] (.25,.25) to [out=90, in=230] (.5,1);
				\draw [very thick] (.25,-.25) to [out=270, in=130] (.5,-1);
				\node at (-1.5,-.8) {$V_\eta$};
				\node at (.8,-.8) {$V_\eta$};
			\end{tikzpicture}
			\;\overset{\RII}{=}\;
			\begin{tikzpicture}[anchorbase,scale=0.8]
				\draw [very thick] (-.5,-.25) -- (.5,-.25) -- (.5,.25) -- (-.5,.25) -- cycle;
				\draw [<-, very thick] (-1.25,0) to [out=90, in=180] (-.75,1) to [out=0,in = 90] (-.25,.25);
				\draw [very thick] (-1.25,0) to [out=270, in=180] (-.75,-1) to [out=0,in=270] (.75,-.75);
				\draw [very thick] (-.25,-.25) to [out=270, in=90] (.75,-.75);
				\draw [->, very thick] (.25,.25) to [out=90, in=230] (.5,1);
				\draw [very thick] (.25,-.25) to [out=270, in=90] (.25,-.43);
				\draw [very thick] (.25,-.57) to [out=270, in=90] (.25,-1.03);
				\draw [very thick] (.25,-1.17) to [out=270, in=160] (.5,-1.30);
				\node at (-1.35,-1) {$V_\eta$};
				\node at (.85,.8) {$V_\eta$};
			\end{tikzpicture}
			\;=\;
			\begin{tikzpicture}[anchorbase,scale=0.8]
				\draw [very thick] (-.5,-.25) -- (.5,-.25) -- (.5,.25) -- (-.5,.25) -- cycle;
				\draw [<-, very thick] (-1.25,0) to [out=90, in=180] (-.5,1) to [out=0,in = 90] (.25,.25);
				\draw [very thick] (-1.25,0) to [out=270, in=180] (-.5,-1) to [out=0,in=270] (.25,-.25);
				\draw [very thick] (-.25,.25) to [out=90, in=235] (.05,.65);
				\draw [->, very thick] (.15,.75) to [out=45, in=230] (.5,1);
				\draw [very thick] (-.25,-.25) to [out=270, in=135] (.05,-.65);
				\draw [very thick] (.15,-.75) to [out=315, in=130] (.5,-1);
				\node at (-1.3,-.8) {$V_\eta$};
				\node at (.7,-.8) {$V_\eta$};
			\end{tikzpicture}
			\;\overset{\RI}{=}\; 
			\begin{tikzpicture}[anchorbase,scale=0.8]
				\draw [very thick] (-.5,-.25) -- (.5,-.25) -- (.5,.25) -- (-.5,.25) -- cycle;
				\draw [<-, very thick] (-1.25,0) to [out=90,in=270] (-1.25,1) to [out=90,in=0] (-1.5,1.25) to [out=180,in=90] (-1.75,1) to [out=270,in=180] (-1.33,.75);
				\draw [very thick] (-1.17,.75) to [out=0,in=180] (-.5,1) to [out=0,in=90] (.25,.25);
				\draw [very thick] (-1.25,0) to [out=270,in=90] (-1.25,-1.25) to [out=270,in=0] (-1.5,-1.5) to [out=180,in=270] (-1.75,-1.25) to [out=90,in=180] (-1.33,-1);
				\draw [very thick] (-1.17,-1) to [out=0, in=180] (-.5,-1) to [out=0,in=270] (.25,-.25);
				\draw [very thick] (-.25,.25) to [out=90, in=235] (.05,.65);
				\draw [->, very thick] (.15,.75) to [out=45, in=230] (.5,1);
				\draw [very thick] (-.25,-.25) to [out=270, in=135] (.05,-.65);
				\draw [very thick] (.15,-.75) to [out=315, in=130] (.5,-1);
				\node at (-1.6,-.7) {$V_\eta$};
				\node at (.8,-1) {$V_\eta$};
			\end{tikzpicture}
			\;\overset{\RII}{=}\;
			\\
			\begin{tikzpicture}[anchorbase,scale=0.8]
				\draw [very thick] (-.5,-.25) -- (.5,-.25) -- (.5,.25) -- (-.5,.25) -- cycle;
				\draw [<-, very thick] (-1.25,0) to [out=90,in=180] (-1,.41) to (-.2,.41) to [out=0,in=270] (0,.53) to [out=90,in=0] (-.2,.65) to (-1,.65) to [out=180,in=270] (-1.25,.75) to [out=90,in=270] (-1.25,1) to [out=90,in=0] (-1.5,1.25) to [out=180,in=90] (-1.75,1) to [out=270,in=180] (-1.33,.75);
				\draw [very thick] (-1.17,.75) to [out=0,in=180] (-.5,1) to [out=0,in=90] (.25,.25);
				\draw [very thick] (-1.25,0) to [out=270,in=90] (-1.25,-1.25) to [out=270,in=0] (-1.5,-1.5) to [out=180,in=270] (-1.75,-1.25) to [out=90,in=180] (-1.33,-1);
				\draw [very thick] (-1.17,-1) to [out=0, in=180] (-.5,-1) to [out=0,in=270] (.25,-.25);
				\draw [very thick] (-.25,.25) to [out=90, in=270] (-.25,.35);
				\draw [very thick] (-.25,.45) to [out=90, in=270] (-.25,.6);
				\draw [very thick] (-.25,.7) to [out=90, in=270] (-.25,.9);
				\draw [->, very thick] (-.25,1) to [out=90, in=270] (-.25,1.3);
				\draw [very thick] (-.25,-.25) to [out=270, in=90] (-.25,-.9);
				\draw [very thick] (-.25,-1.03) to [out=270, in=90] (-.25,-1.55);
				\node at (-1.6,-.7) {$V_\eta$};
				\node at (.2,-1.3) {$V_\eta$};
			\end{tikzpicture}
			\;=\;
			\begin{tikzpicture}[anchorbase,scale=0.8]
				\draw [very thick] (-.5,-.25) -- (.5,-.25) -- (.5,.25) -- (-.5,.25) -- cycle;
				\draw [<-, very thick] (-1.25,0) to [out=90,in=180] (-1,.41) to (.5,.41) to [out=0,in=270] (.75,.65) to [out=90,in=0] (.5,.89) to [out=180,in=90] (.25,.65) to (.25,.48);
				\draw [very thick] (.25,.25) to (.25,.35);
				\draw [very thick] (-1.25,0) to [out=270,in=90] (-1.25,-1.25) to [out=270,in=0] (-1.5,-1.5) to [out=180,in=270] (-1.75,-1.25) to [out=90,in=180] (-1.33,-1);
				\draw [very thick] (-1.17,-1) to [out=0, in=180] (-.5,-1) to [out=0,in=270] (.25,-.25);
				\draw [very thick] (-.25,.25) to [out=90, in=270] (-.25,.35);
				\draw [->, very thick] (-.25,.45) to [out=90, in=270] (-.25,1.3);
				\draw [very thick] (-.25,-.25) to [out=270, in=90] (-.25,-.9);
				\draw [very thick] (-.25,-1.03) to [out=270, in=90] (-.25,-1.55);
				\node at (-1.6,-.7) {$V_\eta$};
				\node at (.2,-1.3) {$V_\eta$};
			\end{tikzpicture}
			\;=\;
			\begin{tikzpicture}[anchorbase,scale=0.8]
				\draw [very thick] (-.5,-.25) -- (.5,-.25) -- (.5,.25) -- (-.5,.25) -- cycle;
				\draw [<-, very thick] (-1.25,-.75) to [out=90,in=180] (-.5,-.5) to (.5,-.5) to [out=0,in=270] (.75,0) to [out=90,in=0] (.5,.5) to [out=180,in=90] (.25,.25);
				\draw [very thick] (-1.25,-.75) to [out=270,in=90] (-1.25,-1.25) to [out=270,in=0] (-1.5,-1.5) to [out=180,in=270] (-1.75,-1.25) to [out=90,in=180] (-1.33,-1);
				\draw [very thick] (-1.17,-1) to [out=0, in=180] (-.5,-1) to [out=0,in=270] (.25,-.55);
				\draw [very thick] (.25,-.25) to (.25,-.45);
				\draw [->, very thick] (-.25,.25) to [out=90, in=270] (-.25,1.55);
				\draw [very thick] (-.25,-.25) to [out=270, in=90] (-.25,-.45);
				\draw [very thick] (-.25,-.55) to [out=270, in=90] (-.25,-.9);
				\draw [very thick] (-.25,-1.03) to [out=270, in=90] (-.25,-1.55);
				\node at (-1.7,-.7) {$V_\eta$};
				\node at (-.6,1.1) {$V_\eta$};
			\end{tikzpicture}
			\;=\;
			\begin{tikzpicture}[anchorbase,scale=0.8]
				\draw [very thick] (-.5,-.25) -- (.5,-.25) -- (.5,.25) -- (-.5,.25) -- cycle;
				\draw [very thick, postaction={decorate}, decoration={markings, mark=at position 0.6 with {\arrow{<}}}] (0,-.75) to [out=90,in=180] (.1,-.5) to (.5,-.5) to [out=0,in=270] (.75,0) to [out=90,in=0] (.5,.5) to [out=180,in=90] (.25,.25);
				\draw [very thick] (0,-.75) to [out=270,in=90] (.25,-1) to [out=270,in=0] (.125,-1.15) to [out=180,in=270] (0,-1) to [out=90,in=250] (.06,-.92);
				\draw [very thick] (.17,-.84) to [out=60,in=270] (.25,-.75) to (.25,-.55);
				\draw [very thick] (.25,-.25) to (.25,-.45);
				\draw [->, very thick] (-.25,.25) to [out=90, in=270] (-.25,1.25);
				\draw [very thick] (-.25,-.25) to [out=270, in=90] (-.25,-1.25);
				\node at (-.6,.8) {$V_\eta$};
				\node at (.8,-1) {$V_\eta$};
			\end{tikzpicture}
			\;\overset{\RII}{=}\;
			\begin{tikzpicture}[anchorbase,scale=0.8]
				\draw [very thick] (-.5,-.25) -- (.5,-.25) -- (.5,.25) -- (-.5,.25) -- cycle;
				\draw [<-, very thick] (1.25,0) to [out=90, in=0] (.75,1) to [out=180,in = 90] (.25,.25);
				\draw [very thick] (1.25,0) to [out=270, in=00] (.75,-1) to [out=180,in=270] (.25,-.25);
				\draw [->, very thick] (-.25,.25) to [out=90, in=310] (-.5,1);
				\draw [very thick] (-.25,-.25) to [out=270, in=50] (-.5,-1);
				\node at (-.85,-.8) {$V_\eta$};
				\node at (1.6,-.8) {$V_\eta$};
			\end{tikzpicture}.
		\end{multline*}
	The second equality is implied by the commutativity of $\End_{\mathcal{C}}(V_\eta \otimes V_\eta)$. The fifth and seventh equalities are each a combination of framed Reidemeister moves $\RII$ and $\RIII$. The sixth equality holds by a combination of framed Reidemeister moves that depends on $T$. The other equalities hold by the indicated framed Reidemeister moves.
\end{proof}
	
\subsection{Modified quantum dimensions}

Define a function $S': \mathbb{C} \times \mathbb{C} \rightarrow \mathbb{C}$ by
\begin{equation*}
	S'(\beta, \alpha) = \, \left \langle F \left(
	\begin{tikzpicture}[anchorbase,scale=0.8]
		\draw [very thick, postaction={decorate}, decoration={markings, mark=at position 0.4 with {\arrow{>}}}] (-.1,-.5) to [out=180,in=270] (-.8,0) to [out=90,in=180] (0,.5) to [out=0,in=90] (.8,0) to [out=270,in=0] (.1,-.5);
		\draw [very thick] (0,-1) to (0,.4);
		\draw [->, very thick] (0,.6) to (0,1);
		\node at (1.2,0) {$V_\beta$};
		\node at (.35,-.9) {$V_\alpha$};
	\end{tikzpicture}\right) \right \rangle.
\end{equation*}
	
\begin{proposition}\label{sHopf} 
	The equality
	\begin{equation*}
		S'(\beta, \alpha) = \begin{cases} q^{\beta\alpha} \frac{\{\alpha r\}}{\{\alpha\}} & \mbox{if } \alpha \in \mathbb{C}\setminus r\Z, \\ q^{\beta rz}  \cdot (-1)^{(r + 1)z} r & \mbox{if } \alpha = rz \in r\Z  \end{cases}
	\end{equation*}
 	holds. In particular, $S'(\beta,\alpha)$ is nonzero for all $\alpha \in \mathbb{C} \setminus \Z \cup r\Z$.
\end{proposition}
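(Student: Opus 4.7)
The plan is to evaluate the endomorphism of $V_\alpha$ defined by the pictured tangle on the highest weight vector $v_0 \in V_\alpha$, which suffices since the space of highest weight vectors of weight $\alpha + r - 1$ in $V_\alpha$ is one-dimensional (by the proof of \cref{verma}). Write $\{w_i\}_{i=0}^{r-1}$ for the standard weight basis of $V_\beta$ and $\{w_i^\vee\}$ for its dual. The tangle decomposes as the composition
\[ V_\alpha \xrightarrow{\id \otimes \coev_{V_\beta}} V_\alpha \otimes V_\beta \otimes V_\beta^\vee \xrightarrow{c_{V_\alpha,V_\beta} \otimes \id} V_\beta \otimes V_\alpha \otimes V_\beta^\vee \xrightarrow{c_{V_\beta,V_\alpha} \otimes \id} V_\alpha \otimes V_\beta \otimes V_\beta^\vee \xrightarrow{\id \otimes \widehat{\ev}_{V_\beta}} V_\alpha. \]

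Two simplifications streamline the computation. First, because $Ev_0 = 0$, only the $l = 0$ term of the $R$-matrix expansion contributes to $c_{V_\alpha, V_\beta}(v_0 \otimes w_i)$, so this braiding reduces to the diagonal operator $q^{H \otimes H/2}$ followed by the swap. Second, in the second braiding $c_{V_\beta, V_\alpha}$, the $l$-th summand of the $R$-matrix produces a factor $E^l w_i$ on the $V_\beta$-slot which is a scalar multiple of $w_{i-l}$ (or zero) and is therefore annihilated by $w_i^\vee$ under $\widehat{\ev}_{V_\beta}$ whenever $l > 0$; again only $l = 0$ survives. Tracking the exponents contributed by the two $H \otimes H/2$-factors together with the $K^{1-r}$-factor in $\widehat{\ev}_{V_\beta}$ collapses the composition to
\[
S'(\beta, \alpha) = q^{\alpha(\beta + r - 1)} \sum_{i=0}^{r-1} q^{-2\alpha i}.
\]
This is essentially the computation carried out in \cref{hopfLink}.

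It remains to evaluate the geometric sum in the two cases. If $\alpha \notin r\Z$, then $q^{-2\alpha} \neq 1$ and the standard geometric series formula, together with the identity $\{x\} = q^x - q^{-x}$, gives $q^{\alpha \beta} \{\alpha r\}/\{\alpha\}$ after elementary rearrangement. If $\alpha = rz \in r\Z$, then each summand equals $1$, so the sum is $r$; the remaining factor $q^{rz(r-1)} = (q^r)^{rz}(q^r)^{-z}$ simplifies to $(-1)^{(r+1)z}$ using $q^r = -1$, producing the second case.

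For the nonvanishing claim, when $\alpha = rz \in r\Z$ the scalar is $\pm r \cdot q^{\beta rz}$, which is obviously nonzero. When $\alpha \in \C \setminus \Z$, neither $\{\alpha\}$ (which vanishes only on $r\Z$) nor $\{\alpha r\} = 2\sqrt{-1}\sin(\pi \alpha)$ (which vanishes only on $\Z$) vanishes, so $S'(\beta,\alpha) \neq 0$. The main obstacle here is no deeper than careful bookkeeping of the various $q$-exponents through the braiding; no conceptual difficulty arises.
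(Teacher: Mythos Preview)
Your proof is correct and follows essentially the same approach as the paper, which simply refers back to the computation in \cref{hopfLink}: you use the same decomposition of the tangle, evaluate on the highest weight vector $v_0$, make the same two simplifications reducing the $R$-matrix to its $l=0$ term, and carry out the same geometric-sum case analysis. Your justification for why only $l=0$ survives in the second braiding (that $E^l w_i \propto w_{i-l}$ is killed by $w_i^\vee$) is in fact cleaner than the paper's phrasing of the omitted terms.
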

\begin{proof}
	This was computed in \cref{hopfLink}.
\end{proof}
	
\begin{definition}\label{modifiedqd}
	Let $\eta \in \mathbb{C}$. The \emph{modified quantum dimension with respect to $\eta$} is the function $\mathbf{d}_\eta : \mathbb{C} \setminus \Z \cup r\Z \rightarrow \mathbb{C}$ given by $\mathbf{d}_\eta(\alpha) = \frac{S'(\alpha,\eta)}{S'(\eta,\alpha)}$.
\end{definition}

By \cref{sHopf}, the modified quantum dimension $\mathbf{d}_{\eta}$ is nowhere zero. Modified quantum dimensions associated to different parameters $\eta$ are related as follows.

\begin{proposition}\label{etaetaprime}
	For $\eta, \eta' \in \mathbb{C} \setminus \Z$ and $\alpha \in \mathbb{C} \setminus \Z \cup r\Z$, the following equality holds:
	\[
	\mathbf{d}_\eta(\alpha) = \frac{\sin(\pi \frac{\eta}{r})\sin(\eta' \pi)}{\sin(\eta \pi)\sin(\pi \frac{\eta'}{r})}\mathbf{d}_{\eta'}(\alpha).
	\]
\end{proposition}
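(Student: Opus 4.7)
The statement is a direct computation that pulls apart $\mathbf{d}_\eta(\alpha)$ using the explicit formula for $S'$ established in \cref{sHopf}, and observes that the $\alpha$-dependence of $\mathbf{d}_\eta(\alpha)/\mathbf{d}_{\eta'}(\alpha)$ cancels.

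First I would compute the two Hopf pairings that build $\mathbf{d}_\eta(\alpha) = S'(\alpha,\eta)/S'(\eta,\alpha)$. Since $\eta \in \C \setminus \Z$, in particular $\eta \notin r \Z$, so the first case of \cref{sHopf} applies to the numerator and gives
\[
S'(\alpha,\eta) = q^{\alpha \eta}\,\frac{\{\eta r\}}{\{\eta\}}
\]
regardless of whether $\alpha \in r\Z$ or not. For the denominator $S'(\eta,\alpha)$ one must split into the two cases of \cref{sHopf}, according to whether $\alpha \in \C \setminus r\Z$ or $\alpha = rz \in r\Z$. In either case the power $q^{\eta\alpha}$ in the denominator cancels the $q^{\alpha\eta}$ in the numerator, and one arrives at an expression of the form
\[
\mathbf{d}_\eta(\alpha) \;=\; \frac{\{\eta r\}}{\{\eta\}}\cdot g(\alpha),
\]
where $g(\alpha) = \{\alpha\}/\{\alpha r\}$ if $\alpha \in \C \setminus r \Z$ and $g(\alpha) = \bigl((-1)^{(r+1)z} r\bigr)^{-1}$ if $\alpha = rz$. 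The crucial point is that the $\eta$-dependence has factored out completely.

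Next I would form the ratio
\[
\frac{\mathbf{d}_\eta(\alpha)}{\mathbf{d}_{\eta'}(\alpha)} \;=\; \frac{\{\eta r\}\,\{\eta'\}}{\{\eta\}\,\{\eta' r\}},
\]
in which the factor $g(\alpha)$ cancels, so the quotient depends only on $\eta$ and $\eta'$. Finally I would convert the $\{\,\cdot\,\}$ symbols to sines via $\{z\} = q^{z}-q^{-z} = 2\sqrt{-1}\,\sin(\pi z/r)$, which yields $\{\eta r\} = 2\sqrt{-1}\,\sin(\pi\eta)$ and $\{\eta\} = 2\sqrt{-1}\,\sin(\pi\eta/r)$, and similarly for $\eta'$, producing the claimed trigonometric ratio.

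There is no real obstacle beyond bookkeeping: the only subtlety is that one has to check both branches of \cref{sHopf} for the denominator $S'(\eta,\alpha)$, since the hypothesis $\alpha \in \C \setminus \Z \cup r\Z$ allows $\alpha \in r\Z$. In that branch the $q^{\eta r z}$ factor again cancels the $q^{\alpha\eta} = q^{rz\eta}$ in the numerator, and the $\alpha$-dependent scalar $(-1)^{(r+1)z} r$ cancels identically between $\mathbf{d}_\eta(\alpha)$ and $\mathbf{d}_{\eta'}(\alpha)$, so the same $\eta,\eta'$ formula results.
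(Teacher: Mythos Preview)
Your proposal is correct and is exactly the approach the paper takes: the paper's proof reads in its entirety ``This follows immediately from \cref{sHopf} and the definition of $\mathbf{d}_{(-)}$,'' and you have simply spelled out that immediate computation, including the case split on whether $\alpha \in r\Z$.
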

\begin{proof}
This follows immediately from \cref{sHopf} and the definition of $\mathbf{d}_{(-)}$.
\end{proof}
	
\begin{theorem}[{\cite[Lemma 2]{geer_2009}}]\label{invariant}
    Let $\eta \in \mathbb{C} $ be such that $V_\eta$ is ambidextrous and $\alpha, \beta \in \mathbb{C} \setminus \Z \cup r\Z$. Then for all $(2,2)$-tangles $T$, the following equality holds:
	\begin{equation*}
		\begin{aligned}
			\mathbf{d}_\eta(\beta)\; \left \langle F \left(
			\begin{tikzpicture}[anchorbase,scale=0.8]
				\draw [very thick] (-.5,-.25) -- (.5,-.25) -- (.5,.25) -- (-.5,.25) -- cycle;
				\draw [<-, very thick] (-1.25,0) to [out=90, in=180] (-.75,1) to [out=0,in = 90] (-.25,.25);
				\draw [very thick] (-1.25,0) to [out=270, in=180] (-.75,-1) to [out=0,in=270] (-.25,-.25);
				\draw [->, very thick] (.25,.25) to [out=90, in=230] (.5,1);
				\draw [very thick] (.25,-.25) to [out=270, in=130] (.5,-1);
				\node at (-1.7,0) {$V_\alpha$};
				\node at (.9,-.9) {$V_\beta$};
				\node at (0,0) {$T$};
			\end{tikzpicture}
			\right) \right \rangle
			&= \mathbf{d}_\eta(\alpha) \; \left \langle F \left(\begin{tikzpicture}[anchorbase,scale=0.8]
				\draw [very thick] (-.5,-.25) -- (.5,-.25) -- (.5,.25) -- (-.5,.25) -- cycle;
				\draw [->, very thick] (-.25,.25) to [out=90, in=310] (-.5,1);
				\draw [very thick] (-.25,-.25) to [out=270, in=50] (-.5,-1);
				\draw [<-, very thick] (1.25,0) to [out=90, in=0] (.75,1) to [out=180,in = 90] (.25,.25);
				\draw [very thick] (1.25,0) to [out=270, in=00] (.75,-1) to [out=180,in=270] (.25,-.25);
				\node at (1.7,0) {$V_\beta$};
				\node at (-.9,-.9) {$V_\alpha$};
				\node at (0,0) {$T$};
			\end{tikzpicture}\right) \right \rangle.
		\end{aligned}
	\end{equation*}
\end{theorem}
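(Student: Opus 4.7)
The plan is to reduce the claim to the ambidexterity of $V_\eta$ (\cref{ambid}) by constructing an auxiliary $(2,2)$-tangle with both open strands colored by $V_\eta$ and carefully evaluating its two partial traces.

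First I would recast the statement at the level of scalars. Because $\alpha,\beta \in \C\setminus\Z\cup r\Z$, the modules $V_\alpha,V_\beta$ are simple by \cref{simplemodules}, so Schur's lemma gives $F(T_R) = x\cdot\id_{V_\alpha}$ and $F(T_L) = y\cdot\id_{V_\beta}$ for unique $x,y \in \C$, where $T_R,T_L$ denote the two $(1,1)$-tangles appearing in the statement. Unpacking \cref{modifiedqd}, the desired identity is equivalent to
\[
S'(\beta,\eta)\, S'(\eta,\alpha)\cdot x \;=\; S'(\alpha,\eta)\, S'(\eta,\beta)\cdot y,
\]
which has the symmetric form suitable for applying ambidexterity of $V_\eta$, since the four factors $S'(\cdot,\eta), S'(\eta,\cdot)$ will arise geometrically as Hopf-link scalars (and are nonzero by \cref{sHopf}).

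Next I would introduce the auxiliary $(2,2)$-tangle $\widehat{T}$ with both open strands colored by $V_\eta$, built as follows: place $T$ in the middle of the diagram, close its $V_\alpha$ ends (resp.\ $V_\beta$ ends) by a short arc on one side, and add a $V_\eta$ strand entering at the bottom and exiting at the top that Hopf-links once with the resulting closed $V_\alpha$-component (resp.\ $V_\beta$-component); the two $V_\eta$ strands are the open boundary of $\widehat{T}$. Because $V_\eta$ is ambidextrous,
\[
\langle F(\ptr_R(\widehat{T}))\rangle \;=\; \langle F(\ptr_L(\widehat{T}))\rangle.
\]
The proof concludes by showing that the right-hand partial trace evaluates to $S'(\eta,\alpha)\,S'(\beta,\eta)\cdot x$, and the left-hand one to $S'(\eta,\beta)\,S'(\alpha,\eta)\cdot y$. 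For the right partial trace, closing the right $V_\eta$ strand produces a closed $V_\eta$ loop Hopf-linked with the $V_\beta$-closure of $T$; at the same time the left open $V_\eta$ strand is Hopf-linked with the $V_\alpha$-closure of $T$. A sequence of planar isotopies (using \cref{rotate} to reposition the loops) together with the simplicity of $V_\alpha$ and $V_\beta$ identifies the resulting scalar endomorphism of $V_\eta$ with the product of the two Hopf-link invariants $S'(\beta,\eta)$, $S'(\eta,\alpha)$ and the scalar $x = \langle F(T_R)\rangle$. The symmetric computation handles the left partial trace.

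The main obstacle will be the careful diagrammatic bookkeeping in this last step, because the natural "slide the loop off" arguments that one would use in the semisimple case involve dividing by the quantum dimension of $V_\alpha$, $V_\beta$, or $V_\eta$, all of which vanish by \cref{lemmasf}. The construction of $\widehat{T}$ is engineered to sidestep this: each would-be-vanishing closed loop is always coupled to an external $V_\eta$ strand via a Hopf link, so the reduction to scalars goes through the nonzero factors $S'(\cdot,\eta)$ and $S'(\eta,\cdot)$ rather than through quantum dimensions. Verifying that the two partial traces of $\widehat{T}$ really do evaluate to the claimed products of $S'$-values and $x$ (resp.\ $y$), using only naturality of the braiding, the snake relations, and framed Reidemeister moves, is the technical heart of the argument.
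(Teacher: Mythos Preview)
Your proposal is correct and takes essentially the same approach as the paper: the auxiliary $(2,2)$-tangle $\widehat{T}$ you describe is exactly the one the paper uses (it appears implicitly as the tangle whose two partial traces give the equality labeled \eqref{weird}), and the paper then factors each side via simplicity of $V_\alpha$, $V_\beta$ together with \cref{rotate} into the product of two $S'$-values and the relevant cut-tangle scalar, just as you outline. There is a harmless labeling slip in your write-up (the open strand of the left-hand diagram in the statement is $V_\beta$, so $F(T_R)$ should be a scalar multiple of $\id_{V_\beta}$ rather than $\id_{V_\alpha}$), but this does not affect the argument.
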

\begin{proof}
	Because $V_\eta$ is ambidextrous, there is an equality
	\begin{equation}\label{weird}
		\begin{aligned}
			\left \langle F \left (
			\begin{tikzpicture}[anchorbase,scale=0.8]
				\draw [very thick] (-.5,-.25) -- (.5,-.25) -- (.5,.25) -- (-.5,.25) -- cycle;
				\draw [very thick, postaction={decorate}, decoration={markings, mark=at position 0.7 with {\arrow{<}}}] (-1.25,.3) to [out=90, in=180] (-.75,1) to [out=0,in = 90] (-.25,.25);
				\draw [very thick] (-1.25,.1) to (-1.25,-.25) to [out=270, in=180] (-.75,-1) to [out=0,in=270] (-.25,-.25);
				\draw [very thick, postaction={decorate}, decoration={markings, mark=at position 0.8 with {\arrow{<}}}] (1.15,-.7) to [out=60,in=270] (1.25,0) to [out=90, in=0] (.75,1) to [out=180,in = 90] (.25,.25);
				\draw [very thick] (1.05,-.9) to [out=240, in=0] (.75,-1) to [out=180,in=270] (.25,-.25);
				\draw [very thick, postaction={decorate}, decoration={markings, mark=at position 0.4 with {\arrow{>}}}] (-1.15,-.2) to [out=0, in=270] (-.75,0) to [out=90, in=0] (-1.25,.2) to [out=180, in=90] (-1.75,0) to [out=270, in=180] (-1.35,-.2);
				\draw [very thick, postaction={decorate}, decoration={markings, mark=at position 0.5 with {\arrow{>}}}] (1.4,-1) to [out=90,in=270] (.9,-.6) to (.9, .6) to [out=90,in=45] (.95,.7);
				\draw [very thick] (1.2,.85) to [in=270](1.4, 1);
				\node at (-2.1,0) {$V_\eta$};
				\node at (-1.5,.9) {$V_\alpha$};
				\node at (1.6,0) {$V_\beta$};
				\node at (1.8,-.9) {$V_\eta$};
				\node at (0,0) {$T$};
			\end{tikzpicture} \right ) \right \rangle
			&= \left \langle F \left (
			\begin{tikzpicture}[anchorbase,scale=0.8]
				\draw [very thick] (-.5,-.25) -- (.5,-.25) -- (.5,.25) -- (-.5,.25) -- cycle;
				\draw [very thick, postaction={decorate}, decoration={markings, mark=at position 0.7 with {\arrow{<}}}] (1.25,-.1) to (1.25,.25) to [out=90, in=0] (.75,1) to [out=180,in = 90] (.25,.25);
				\draw [very thick] (1.25,-.3) to [out=270, in=0] (.75,-1) to [out=180,in=270] (.25,-.25);
				\draw [very thick] (-1.15,.7) to [out=240,in=90] (-1.25,0) to [out=270, in=180] (-.75,-1) to [out=0,in=270] (-.25,-.25);
				\draw [very thick, postaction={decorate}, decoration={markings, mark=at position 0.48 with {\arrow{<}}}] (-1.05,.89) to [out=60, in=180] (-.75,1) to [out=0,in=90] (-.25,.25);
				\draw [very thick, postaction={decorate}, decoration={markings, mark=at position 0.65 with {\arrow{<}}}] (1.15,.2) to [out=180, in=90] (.75,0) to [out=270, in=180] (1.25,-.2) to [out=0, in=270] (1.75,0) to [out=90, in=0] (1.35,.2);
				\draw [very thick] (-1.4,-1) to [out=90,in=215] (-1.2,-.85);
				\draw [very thick, postaction={decorate}, decoration={markings, mark=at position 0.5 with {\arrow{>}}}] (-1.02,-.75) to [out=45,in=270] (-.9,-.6) to (-.9, .6) to [out=90,in=270] (-1.4, 1);
				\node at (2.1,0) {$V_\eta$};
				\node at (1.5,.9) {$V_\beta$};
				\node at (-1.6,0) {$V_\alpha$};
				\node at (-1.8,-.9) {$V_\eta$};
				\node at (0,0) {$T$};
			\end{tikzpicture} \right ) \right \rangle.
		\end{aligned}
	\end{equation}
	We expand both sides of this equality. The left-hand side becomes
	\begin{equation*}
		\begin{aligned} &\left \langle F \left (
			\begin{tikzpicture}[anchorbase,scale=0.8]
				\draw [very thick] (-.5,-.25) -- (.5,-.25) -- (.5,.25) -- (-.5,.25) -- cycle;
				\draw [very thick, postaction={decorate}, decoration={markings, mark=at position 0.7 with {\arrow{<}}}] (-1.25,.3) to [out=90, in=180] (-.75,1) to [out=0,in = 90] (-.25,.25);
				\draw [very thick] (-1.25,.1) to (-1.25,-.25) to [out=270, in=180] (-.75,-1) to [out=0,in=270] (-.25,-.25);
				\draw [very thick, postaction={decorate}, decoration={markings, mark=at position 0.8 with {\arrow{<}}}] (1.15,-.7) to [out=60,in=270] (1.25,0) to [out=90, in=0] (.75,1) to [out=180,in = 90] (.25,.25);
				\draw [very thick] (1.05,-.9) to [out=240, in=0] (.75,-1) to [out=180,in=270] (.25,-.25);
				\draw [very thick, postaction={decorate}, decoration={markings, mark=at position 0.4 with {\arrow{>}}}] (-1.15,-.2) to [out=0, in=270] (-.75,0) to [out=90, in=0] (-1.25,.2) to [out=180, in=90] (-1.75,0) to [out=270, in=180] (-1.35,-.2);
				\draw [very thick, postaction={decorate}, decoration={markings, mark=at position 0.5 with {\arrow{>}}}] (1.4,-1) to [out=90,in=270] (.9,-.6) to (.9, .6) to [out=90,in=45] (.95,.7);
				\draw [very thick] (1.2,.85) to [in=270](1.4, 1);
				\node at (-2.1,0) {$V_\eta$};
				\node at (-1.5,.9) {$V_\alpha$};
				\node at (1.6,0) {$V_\beta$};
				\node at (1.8,-.9) {$V_\eta$};
				\node at (0,0) {$T$};
			\end{tikzpicture} \right ) \right \rangle
			= \left \langle F \left (
			\begin{tikzpicture}[anchorbase,scale=0.8]
				\draw [very thick, postaction={decorate}, decoration={markings, mark=at position 0.4 with {\arrow{<}}}] (-.1,-.5) to [out=180,in=270] (-.8,0) to [out=90,in=180] (0,.5) to [out=0,in=90] (.8,0) to [out=270,in=0] (.1,-.5);
				\draw [<-,very thick] (0,-1) to (0,.4);
				\draw [very thick] (0,.6) to (0,1);
				\node at (1.2,0) {$V_\eta$};
				\node at (.5,-.9) {$V_\alpha$};
			\end{tikzpicture} \right ) \right \rangle
			\;\;\left \langle F \left (
			\begin{tikzpicture}[anchorbase,scale=0.8]
				\draw [very thick] (-.5,-.25) -- (.5,-.25) -- (.5,.25) -- (-.5,.25) -- cycle;
				\draw [<-, very thick] (-1.25,0) to [out=90, in=180] (-.75,1) to [out=0,in = 90] (-.25,.25);
				\draw [very thick] (-1.25,0) to [out=270, in=180] (-.75,-1) to [out=0,in=270] (-.25,-.25);
				\draw [very thick, postaction={decorate}, decoration={markings, mark=at position 0.8 with {\arrow{<}}}] (1.15,-.7) to [out=60,in=270] (1.25,0) to [out=90, in=0] (.75,1) to [out=180,in = 90] (.25,.25);
				\draw [very thick] (1.05,-.9) to [out=240, in=0] (.75,-1) to [out=180,in=270] (.25,-.25);
				\draw [very thick, postaction={decorate}, decoration={markings, mark=at position 0.5 with {\arrow{>}}}] (1.4,-1) to [out=90,in=270] (.9,-.6) to (.9, .6) to [out=90,in=45] (.95,.7);
				\draw [very thick] (1.2,.85) to [in=270](1.4, 1);
				\node at (-1.7,0) {$V_\alpha$};
				\node at (1.6,0) {$V_\beta$};
				\node at (1.8,-.9) {$V_\eta$};
				\node at (0,0) {$T$};
			\end{tikzpicture} \right ) \right \rangle
			\\
			&=\left \langle F \left (
			\begin{tikzpicture}[anchorbase,scale=0.8]
				\draw [very thick, postaction={decorate}, decoration={markings, mark=at position 0.4 with {\arrow{<}}}] (.1,.5) to [out=0,in=90] (.8,0) to [out=270,in=0] (0,-.5) to [out=180,in=270] (-.8,0) to [out=90,in=180] (-.1,.5);
				\draw [very thick] (0,-1) to (0,-.6);
				\draw [->, very thick] (0,-.4) to (0,1);
				\node at (1.2,0) {$V_\eta$};
				\node at (.35,-.9) {$V_\alpha$};
			\end{tikzpicture} \right ) \right \rangle
			\;
			\left \langle F \left (
			\begin{tikzpicture}[anchorbase,scale=0.8]
				\draw [very thick] (-.5,-.25) -- (.5,-.25) -- (.5,.25) -- (-.5,.25) -- cycle;
				\draw [<-, very thick] (-1.25,0) to [out=90, in=180] (-.75,1) to [out=0,in = 90] (-.25,.25);
				\draw [very thick] (-1.25,0) to [out=270, in=180] (-.75,-1) to [out=0,in=270] (-.25,-.25);
				\draw [->, very thick] (.25,.25) to [out=90, in=230] (.5,1);
				\draw [very thick] (.25,-.25) to [out=270, in=130] (.5,-1);
				\node at (-1.7,0) {$V_\alpha$};
				\node at (.9,-.9) {$V_\beta$};
				\node at (0,0) {$T$};
			\end{tikzpicture} \right ) \right \rangle
			\;
			\left \langle F \left (
			\begin{tikzpicture}[anchorbase,scale=0.8]
				\draw [very thick, postaction={decorate}, decoration={markings, mark=at position 0.4 with {\arrow{>}}}] (-.1,-.5) to [out=180,in=270] (-.8,0) to [out=90,in=180] (0,.5) to [out=0,in=90] (.8,0) to [out=270,in=0] (.1,-.5);
				\draw [very thick] (0,-1) to (0,.4);
				\draw [->, very thick] (0,.6) to (0,1);
				\node at (1.2,0) {$V_\beta$};
				\node at (.35,-.9) {$V_\eta$};
			\end{tikzpicture} \right ) \right \rangle
			\\
			&=\left \langle F \left (
			\begin{tikzpicture}[anchorbase,scale=0.8]
				\draw [very thick, postaction={decorate}, decoration={markings, mark=at position 0.4 with {\arrow{>}}}] (-.1,-.5) to [out=180,in=270] (-.8,0) to [out=90,in=180] (0,.5) to [out=0,in=90] (.8,0) to [out=270,in=0] (.1,-.5);
				\draw [very thick] (0,-1) to (0,.4);
				\draw [->, very thick] (0,.6) to (0,1);
				\node at (1.2,0) {$V_\eta$};
				\node at (.35,-.9) {$V_\alpha$};
			\end{tikzpicture} \right ) \right \rangle
			\;
			\left \langle F \left (
			\begin{tikzpicture}[anchorbase,scale=0.8]
				\draw [very thick] (-.5,-.25) -- (.5,-.25) -- (.5,.25) -- (-.5,.25) -- cycle;
				\draw [<-, very thick] (-1.25,0) to [out=90, in=180] (-.75,1) to [out=0,in = 90] (-.25,.25);
				\draw [very thick] (-1.25,0) to [out=270, in=180] (-.75,-1) to [out=0,in=270] (-.25,-.25);
				\draw [->, very thick] (.25,.25) to [out=90, in=230] (.5,1);
				\draw [very thick] (.25,-.25) to [out=270, in=130] (.5,-1);
				\node at (-1.7,0) {$V_\alpha$};
				\node at (.9,-.9) {$V_\beta$};
				\node at (0,0) {$T$};
			\end{tikzpicture} \right ) \right \rangle
			\;
			\left \langle F \left (
			\begin{tikzpicture}[anchorbase,scale=0.8]
				\draw [very thick, postaction={decorate}, decoration={markings, mark=at position 0.4 with {\arrow{>}}}] (-.1,-.5) to [out=180,in=270] (-.8,0) to [out=90,in=180] (0,.5) to [out=0,in=90] (.8,0) to [out=270,in=0] (.1,-.5);
				\draw [very thick] (0,-1) to (0,.4);
				\draw [->, very thick] (0,.6) to (0,1);
				\node at (1.2,0) {$V_\beta$};
				\node at (.35,-.9) {$V_\eta$};
			\end{tikzpicture} \right ) \right \rangle
			\\
			&= S'(\eta,\alpha)\;
			\left \langle F \left (
			\begin{tikzpicture}[anchorbase,scale=0.8]
				\draw [very thick] (-.5,-.25) -- (.5,-.25) -- (.5,.25) -- (-.5,.25) -- cycle;
				\draw [<-, very thick] (-1.25,0) to [out=90, in=180] (-.75,1) to [out=0,in = 90] (-.25,.25);
				\draw [very thick] (-1.25,0) to [out=270, in=180] (-.75,-1) to [out=0,in=270] (-.25,-.25);
				\draw [->, very thick] (.25,.25) to [out=90, in=230] (.5,1);
				\draw [very thick] (.25,-.25) to [out=270, in=130] (.5,-1);
				\node at (-1.7,0) {$V_\alpha$};
				\node at (.9,-.9) {$V_\beta$};
				\node at (0,0) {$T$};
			\end{tikzpicture} \right ) \right \rangle\; S'(\beta,\eta),
		\end{aligned}
	\end{equation*}
	where we have repeatedly applied \cref{rotate}. The right-hand side becomes
	\begin{equation*}
		\begin{aligned}
		    &\left \langle F \left (
			\begin{tikzpicture}[anchorbase,scale=0.8]
				\draw [very thick] (-.5,-.25) -- (.5,-.25) -- (.5,.25) -- (-.5,.25) -- cycle;
				\draw [very thick, postaction={decorate}, decoration={markings, mark=at position 0.7 with {\arrow{<}}}] (1.25,-.1) to (1.25,.25) to [out=90, in=0] (.75,1) to [out=180,in = 90] (.25,.25);
				\draw [very thick] (1.25,-.3) to [out=270, in=0] (.75,-1) to [out=180,in=270] (.25,-.25);
				\draw [very thick] (-1.15,.7) to [out=240,in=90] (-1.25,0) to [out=270, in=180] (-.75,-1) to [out=0,in=270] (-.25,-.25);
				\draw [very thick, postaction={decorate}, decoration={markings, mark=at position 0.48 with {\arrow{<}}}] (-1.05,.89) to [out=60, in=180] (-.75,1) to [out=0,in=90] (-.25,.25);
				\draw [very thick, postaction={decorate}, decoration={markings, mark=at position 0.65 with {\arrow{<}}}] (1.15,.2) to [out=180, in=90] (.75,0) to [out=270, in=180] (1.25,-.2) to [out=0, in=270] (1.75,0) to [out=90, in=0] (1.35,.2);
				\draw [very thick] (-1.4,-1) to [out=90,in=215] (-1.2,-.85);
				\draw [very thick, postaction={decorate}, decoration={markings, mark=at position 0.5 with {\arrow{>}}}] (-1.02,-.75) to [out=45,in=270] (-.9,-.6) to (-.9, .6) to [out=90,in=270] (-1.4, 1);
				\node at (2.1,0) {$V_\eta$};
				\node at (1.5,.9) {$V_\beta$};
				\node at (-1.6,0) {$V_\alpha$};
				\node at (-1.8,-.9) {$V_\eta$};
				\node at (0,0) {$T$};
			\end{tikzpicture}
			\right ) \right \rangle
			= \left \langle F \left (
			\begin{tikzpicture}[anchorbase,scale=0.8]
				\draw [very thick] (-.5,-.25) -- (.5,-.25) -- (.5,.25) -- (-.5,.25) -- cycle;
				\draw [<-, very thick] (1.25,0) to [out=90, in=0] (.75,1) to [out=180,in = 90] (.25,.25);
				\draw [very thick] (1.25,0) to [out=270, in=00] (.75,-1) to [out=180,in=270] (.25,-.25);
				\draw [very thick, postaction={decorate}, decoration={markings, mark=at position 0.48 with {\arrow{<}}}] (-1.05,.89) to [out=60, in=180] (-.75,1) to [out=0,in=90] (-.25,.25);
				\draw [very thick] (-1.15,.7) to [out=240,in=90] (-1.25,0) to [out=270, in=180] (-.75,-1) to [out=0,in=270] (-.25,-.25);
				\draw [very thick] (-1.4,-1) to [out=90,in=215] (-1.2,-.85);
				\draw [very thick, postaction={decorate}, decoration={markings, mark=at position 0.5 with {\arrow{>}}}] (-1.02,-.75) to [out=45,in=270] (-.9,-.6) to (-.9, .6) to [out=90,in=270] (-1.4, 1);
				\node at (1.7,0) {$V_\beta$};
				\node at (-1.6,0) {$V_\alpha$};
				\node at (-1.8,-.9) {$V_\eta$};
				\node at (0,0) {$T$};
			\end{tikzpicture} \right ) \right \rangle
			\;\;
			\left \langle F \left (
			\begin{tikzpicture}[anchorbase,scale=0.8]
				\draw [very thick, postaction={decorate}, decoration={markings, mark=at position 0.4 with {\arrow{>}}}] (.1,.5) to [out=0,in=90] (.8,0) to [out=270,in=0] (0,-.5) to [out=180,in=270] (-.8,0) to [out=90,in=180] (-.1,.5);
				\draw [<-,very thick] (0,-1) to (0,-.6);
				\draw [very thick] (0,-.4) to (0,1);
				\node at (1.2,0) {$V_\eta$};
				\node at (.5,-.9) {$V_\beta$};
			\end{tikzpicture} \right ) \right \rangle
			\\
			&= \left \langle F \left (
			\begin{tikzpicture}[anchorbase,scale=0.8]
				\draw [very thick, postaction={decorate}, decoration={markings, mark=at position 0.6 with {\arrow{<}}}] (.1,.5) to [out=0,in=90] (.8,0) to [out=270,in=0] (0,-.5) to [out=180,in=270] (-.8,0) to [out=90,in=180] (-.1,.5);
				\draw [very thick] (0,-1) to (0,-.6);
				\draw [->, very thick] (0,-.4) to (0,1);
				\node at (1.2,0) {$V_\alpha$};
				\node at (.35,-.9) {$V_\eta$};
			\end{tikzpicture} \right ) \right \rangle
			\; \left \langle F \left (
			\begin{tikzpicture}[anchorbase,scale=0.8]
				\draw [very thick] (-.5,-.25) -- (.5,-.25) -- (.5,.25) -- (-.5,.25) -- cycle;
				\draw [->, very thick] (-.25,.25) to [out=90, in=310] (-.5,1);
				\draw [very thick] (-.25,-.25) to [out=270, in=50] (-.5,-1);
				\draw [<-, very thick] (1.25,0) to [out=90, in=0] (.75,1) to [out=180,in = 90] (.25,.25);
				\draw [very thick] (1.25,0) to [out=270, in=00] (.75,-1) to [out=180,in=270] (.25,-.25);
				\node at (1.7,0) {$V_\beta$};
				\node at (-.9,-.9) {$V_\alpha$};
				\node at (0,0) {$T$};
			\end{tikzpicture} \right ) \right \rangle
			\; \left \langle F \left (
			\begin{tikzpicture}[anchorbase,scale=0.8]
				\draw [very thick, postaction={decorate}, decoration={markings, mark=at position 0.4 with {\arrow{>}}}] (-.1,-.5) to [out=180,in=270] (-.8,0) to [out=90,in=180] (0,.5) to [out=0,in=90] (.8,0) to [out=270,in=0] (.1,-.5);
				\draw [very thick] (0,-1) to (0,.4);
				\draw [->, very thick] (0,.6) to (0,1);
				\node at (1.2,0) {$V_\eta$};
				\node at (.35,-.9) {$V_\beta$};
			\end{tikzpicture} \right ) \right \rangle
			\\
			&= \left \langle F \left (
			\begin{tikzpicture}[anchorbase,scale=0.8]
				\draw [very thick, postaction={decorate}, decoration={markings, mark=at position 0.4 with {\arrow{>}}}] (-.1,-.5) to [out=180,in=270] (-.8,0) to [out=90,in=180] (0,.5) to [out=0,in=90] (.8,0) to [out=270,in=0] (.1,-.5);
				\draw [very thick] (0,-1) to (0,.4);
				\draw [->, very thick] (0,.6) to (0,1);
				\node at (1.2,0) {$V_\alpha$};
				\node at (.35,-.9) {$V_\eta$};
			\end{tikzpicture} \right ) \right \rangle
			\; \left \langle F \left (
			\begin{tikzpicture}[anchorbase,scale=0.8]
				\draw [very thick] (-.5,-.25) -- (.5,-.25) -- (.5,.25) -- (-.5,.25) -- cycle;
				\draw [->, very thick] (-.25,.25) to [out=90, in=310] (-.5,1);
				\draw [very thick] (-.25,-.25) to [out=270, in=50] (-.5,-1);
				\draw [<-, very thick] (1.25,0) to [out=90, in=0] (.75,1) to [out=180,in = 90] (.25,.25);
				\draw [very thick] (1.25,0) to [out=270, in=00] (.75,-1) to [out=180,in=270] (.25,-.25);
				\node at (1.7,0) {$V_\beta$};
				\node at (-.9,-.9) {$V_\alpha$};
				\node at (0,0) {$T$};
			\end{tikzpicture} \right ) \right \rangle
			\; \left \langle F \left (
			\begin{tikzpicture}[anchorbase,scale=0.8]
				\draw [very thick, postaction={decorate}, decoration={markings, mark=at position 0.4 with {\arrow{>}}}] (-.1,-.5) to [out=180,in=270] (-.8,0) to [out=90,in=180] (0,.5) to [out=0,in=90] (.8,0) to [out=270,in=0] (.1,-.5);
				\draw [very thick] (0,-1) to (0,.4);
				\draw [->, very thick] (0,.6) to (0,1);
				\node at (1.2,0) {$V_\eta$};
				\node at (.35,-.9) {$V_\beta$};
			\end{tikzpicture} \right ) \right \rangle
			\\
			&=S'(\alpha,\eta)\; \left \langle F \left (
			\begin{tikzpicture}[anchorbase,scale=0.8]
				\draw [very thick] (-.5,-.25) -- (.5,-.25) -- (.5,.25) -- (-.5,.25) -- cycle;
				\draw [->, very thick] (-.25,.25) to [out=90, in=310] (-.5,1);
				\draw [very thick] (-.25,-.25) to [out=270, in=50] (-.5,-1);
				\draw [<-, very thick] (1.25,0) to [out=90, in=0] (.75,1) to [out=180,in = 90] (.25,.25);
				\draw [very thick] (1.25,0) to [out=270, in=00] (.75,-1) to [out=180,in=270] (.25,-.25);
				\node at (1.7,0) {$V_\beta$};
				\node at (-.9,-.9) {$V_\alpha$};
				\node at (0,0) {$T$};
			\end{tikzpicture} \right ) \right \rangle\; S'(\eta,\beta).
		\end{aligned}
	\end{equation*}
	By \cref{sHopf}, $S'(\eta,\alpha)$ and $S'(\eta,\beta)$ are non-zero. We can therefore divide both sides of \cref{weird} by $S'(\eta,\alpha)S'(\eta,\beta)$ to complete the proof.
\end{proof}
	
\begin{corollary}\label{manyAmbi}
	For each $\alpha \in \mathbb{C}\setminus \Z \cup r\Z$, the module $V_\alpha$ is ambidextrous. In particular, any simple module in $\mathcal{C}$ is ambidextrous.
\end{corollary}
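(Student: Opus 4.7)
The plan is to bootstrap from the existence of ambidextrous Verma modules already provided by \cref{ambid} to all simple objects of $\mathcal{C}$, using \cref{invariant} as the leveraging tool. The argument naturally splits into two cases matching the classification of simple objects in \cref{simplemodules}: the Verma modules $V_\alpha$ with $\alpha \in \mathbb{C} \setminus \Z \cup r\Z$, and the finite dimensional quotients $S_n^{lr}$.

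For the first family, I would fix any $\eta \in \mathbb{C} \setminus \frac{1}{2}\Z$, so that $V_\eta$ is ambidextrous by \cref{ambid}. Given an arbitrary $\alpha \in \mathbb{C} \setminus \Z \cup r\Z$, \cref{invariant} applies and, specializing to $\beta = \alpha$, produces the identity
\[
\mathbf{d}_\eta(\alpha)\,\langle F(T_{\text{left}})\rangle = \mathbf{d}_\eta(\alpha)\,\langle F(T_{\text{right}})\rangle
\]
for every $(2,2)$-tangle $T$ whose four open strands are colored by $V_\alpha$, where $T_{\text{left}}$ and $T_{\text{right}}$ denote the two $(1,1)$-tangles appearing in the defining equation \eqref{ambiequation} of ambidexterity. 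The scalar $\mathbf{d}_\eta(\alpha)$ is non-zero by \cref{sHopf}, so it can be cancelled from both sides, yielding precisely the ambidextrous equation for $V_\alpha$.

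For the remaining simple objects $S_n^{lr}$, with $0 \leq n \leq r-2$ and $l \in \Z$, \cref{lemmasf} computes $\qdim_\mathcal{C}(S_n^{lr}) = (-1)^{n+l+lr}[n+1]$, which is non-zero since $0 \leq n \leq r-2$ ensures $[n+1] \neq 0$. \cref{ambiqd} then immediately implies that each $S_n^{lr}$ is ambidextrous. Combined with the previous paragraph and the classification in \cref{simplemodules}, this proves both assertions of the corollary.

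I do not anticipate any serious obstacle here: the whole corollary is essentially a clean specialization of \cref{invariant} together with the already handled cases. The only point requiring brief verification is that setting $\beta = \alpha$ in \cref{invariant} genuinely reproduces the ambidextrous equation, which is transparent by inspection of the two displayed diagrams since all four legs of the coupon $T$ then carry the same color $V_\alpha$.
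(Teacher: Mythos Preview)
Your proof is correct and follows essentially the same approach as the paper: fix an ambidextrous $V_\eta$ with $\eta \in \mathbb{C} \setminus \tfrac{1}{2}\Z$ via \cref{ambid}, specialize \cref{invariant} to $\beta = \alpha$ and cancel the nonzero scalar $\mathbf{d}_\eta(\alpha)$, then handle the remaining simples $S_n^{lr}$ via \cref{ambiqd} and the classification in \cref{simplemodules}. The paper's proof is just a more compressed version of exactly this argument.
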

\begin{proof}
    Let $\eta \in \mathbb{C} \setminus \frac{1}{2} \mathbb{Z}$. By \cref{ambid}, the module $V_{\eta}$ is ambidextrous. By \cref{sHopf}, the scalar $\mathbf{d}_{\eta}(\alpha)$ is non-zero. The first statement now follows from taking $\alpha =\beta$ in \cref{invariant}. Using the classification of simple objects of $\mathcal{C}$ given in \cref{simplemodules}, the second claim follows from the above and \cref{ambiqd}.
\end{proof}
	
\subsection{Renormalized Reshetikhin--Turaev invariants of links}

Denote by $\mathfrak{L}$ the set of all framed colored links for which at least one of its colors is of the form $V_\alpha$ for some  $\alpha \in \mathbb{C} \setminus \Z \cup r\Z$. We view $\mathfrak{L}$ as a subset of morphisms of $\mathbf{Rib}_\mathcal{C}$.

\begin{theorem}[{\cite[Theorem 3]{geer_2009}}]\label{invaraiant}
    Let $\eta \in \mathbb{C} \setminus \Z \cup r \Z$. Then the map $F_\eta': \mathfrak{L} \rightarrow \mathbb{C}$ given by
	\begin{equation*}
		F_\eta'(L) = \mathbf{d}_\eta(\alpha) \langle F(T) \rangle,
	\end{equation*}
	where $T$ is a $(1,1)$-tangle whose closure is $L$ and whose open strand is colored by $V_{\alpha}$ for some $\alpha \in \mathbb{C} \setminus \mathbb{Z} \cup r \mathbb{Z}$, is a well-defined isotopy invariant of links in $\mathfrak{L}$.
\end{theorem}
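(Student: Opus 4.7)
The plan is to deduce \cref{invaraiant} from \cref{invariant} and the standard argument behind \cref{modified_invariant} in two independent steps: first, that once a component of $L$ is fixed for cutting, the scalar $\langle F(T) \rangle$ depends only on the framed isotopy class of $L$; and second, that the renormalized value $\mathbf{d}_\eta(\alpha) \langle F(T) \rangle$ is independent of which component of $L$ is chosen for cutting. For the first step, I would use the same observation that underlies \cref{modified_invariant}: if a component $c$ of $L$ is marked, then the isotopy class of the $(1,1)$-tangle obtained by cutting $c$ is determined by the isotopy class of the pair $(L, c)$. Combined with the functoriality of $F$ from \cref{reshturfunctor}, this gives $\langle F(T) \rangle = \langle F(T') \rangle$ whenever $T$ and $T'$ arise from framed-isotopic representatives of $L$ cut along the same component, so the renormalized scalar $\mathbf{d}_\eta(\alpha) \langle F(T) \rangle$ is also invariant.

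For the second step, let $c$ and $c'$ be two components of $L$, colored by $V_\alpha$ and $V_\beta$ respectively with $\alpha, \beta \in \mathbb{C} \setminus \mathbb{Z} \cup r\mathbb{Z}$. I would apply a planar isotopy to realise $L$ as the simultaneous closure of a $(2,2)$-tangle $S$ whose two open strands lie on $c$ and $c'$, with the outer arcs of the closure completing each strand back to its component. Reattaching the outer arc on the $c$-side then produces a $(1,1)$-tangle $T_\beta$ whose closure is $L$ and whose open strand, colored by $V_\beta$, realises a cut of $c'$; the symmetric construction yields a $(1,1)$-tangle $T_\alpha$ realising a cut of $c$. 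These are precisely the two pictures compared in \cref{invariant}. Since $V_\eta$ is ambidextrous by \cref{manyAmbi}, \cref{invariant} yields
\[
\mathbf{d}_\eta(\alpha) \langle F(T_\alpha) \rangle = \mathbf{d}_\eta(\beta) \langle F(T_\beta) \rangle,
\]
which is the desired independence. The special case $c = c'$ (and thus $\alpha = \beta$) reduces to step one, with non-vanishing of $\mathbf{d}_\eta$ from \cref{sHopf} ensuring the division is harmless.

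The main obstacle will be the geometric move in step two: showing that $L$ can always be arranged as the closure of a $(2,2)$-tangle whose open strands lie on a prescribed pair of components. The intuitive picture is to isotope the two distinguished components so that each contains a short vertical arc adjacent to the boundary of a rectangle containing the remainder of $L$, and then take $S$ to be the contents of that rectangle. This is standard in ribbon-graph calculus but should be executed carefully, in particular to verify that the resulting $(1,1)$-tangles $T_\alpha$ and $T_\beta$ are cuts of $L$ along $c$ and $c'$ in the precise sense of step one, so that the two steps fit together to establish both isotopy invariance and well-definedness at once.
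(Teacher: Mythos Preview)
Your proposal is correct and follows essentially the same approach as the paper: invoke \cref{manyAmbi} so that \cref{invariant} applies to any pair of cut components, giving independence of the choice of component, and appeal to \cref{reshturfunctor} (via the argument of \cref{modified_invariant}) for isotopy invariance once a component is fixed. The paper's proof is terser and simply asserts the applicability of \cref{invariant} without spelling out the $(2,2)$-tangle presentation you describe; your explicit discussion of that geometric step is a welcome elaboration rather than a departure.
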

\begin{proof}
    Well-definedness of $F_\eta'$ is the statement that $F_\eta'(L)$ is independent of the choice of $(1,1)$-tangle $T$ used in its definition. If $T$ and $T'$ are $(1,1)$-tangles constructed from $L$ by cutting along strands colored by $V_\alpha$ and $V_\beta$, respectively, then, in view of \cref{manyAmbi}, \cref{invariant} shows that $\mathbf{d}_\eta(\alpha) \langle F(T) \rangle = \mathbf{d}_\eta(\beta) \langle F(T^{\prime}) \rangle$. Isotopy invariance of $F_\eta'$ follows from \cref{reshturfunctor}.
\end{proof}

Comparing the definition of $ F_\eta'(L)$ with \cref{formula} shows that $\mathbf{d}_{\eta}(\alpha)$ plays the role of $\qdim_{\mathcal{C}}(V_{\alpha})$ in the standard theory. This justifies the term \emph{modified quantum dimension} for the function $\mathbf{d}_{\eta}$.

\subsection{Basic properties and examples}
\label{sec:examples}

We begin by discussing the dependence of the renormalized invariants of \cref{invaraiant} on the parameter $\eta$. By \cref{etaetaprime}, the modified quantum dimensions functions associated to two such parameters $\eta$ and $\eta^{\prime}$ differ by an explicit global scalar. In fact, more is true.

\begin{lemma}\label{multiples}
	Let $D: \mathbb{C}\setminus \Z \cup r \Z \rightarrow \mathbb{C}$ be a function such that the assignment $L \mapsto D(\alpha) \langle F(T) \rangle$, where $T$ is any $(1,1)$-tangle whose closure is isotopic to $L$ and whose open strand is colored by $V_{\alpha}$ for some $\alpha \in \mathbb{C} \setminus \mathbb{Z} \cup r \mathbb{Z}$, is a well-defined invariant of colored framed links in $\mathfrak{L}$. Then, for any $\eta \in \C \setminus \Z$, there exists a scalar $d \in \C$ such that $D = d \cdot \mathbf{d}_\eta$.
\end{lemma}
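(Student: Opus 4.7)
The plan is to extract a functional equation for $D$ by applying its well-definedness to a single carefully chosen two-component link, namely the Hopf link, and then compare the two sides of that equation with the explicit computation already recorded in \cref{sHopf}.

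First I would fix $\alpha \in (\C \setminus \Z) \cup r\Z$ and consider the framed Hopf link $H_{\alpha,\eta}$ whose two components are colored by $V_\alpha$ and $V_\eta$. Since $\eta \in \C \setminus \Z$, the link $H_{\alpha,\eta}$ lies in $\mathfrak{L}$ irrespective of whether $\alpha \in \C \setminus \Z$ or $\alpha \in r\Z$, and both of its strands are colored by Verma modules of the form eligible for cutting in the definition of the putative invariant $L \mapsto D(\gamma) \langle F(T) \rangle$. Let $T_\alpha$ (respectively $T_\eta$) denote the $(1,1)$-tangle obtained by cutting $H_{\alpha,\eta}$ along the strand colored by $V_\alpha$ (respectively $V_\eta$). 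The computation already carried out in \cref{hopfLink} shows that $\langle F(T_\alpha) \rangle = S'(\eta,\alpha)$ and, by the symmetric cut, $\langle F(T_\eta) \rangle = S'(\alpha,\eta)$.

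Well-definedness of the map $L \mapsto D(\gamma)\langle F(T)\rangle$ on $H_{\alpha,\eta}$ forces the two expressions obtained from the two possible cuts to agree, yielding the identity
\begin{equation*}
    D(\alpha)\, S'(\eta,\alpha) \;=\; D(\eta)\, S'(\alpha,\eta).
\end{equation*}
By \cref{sHopf}, $S'(\eta,\alpha) \neq 0$ since $\alpha \in (\C \setminus \Z) \cup r\Z$, so this rearranges to
\begin{equation*}
    D(\alpha) \;=\; D(\eta)\, \frac{S'(\alpha,\eta)}{S'(\eta,\alpha)} \;=\; D(\eta)\, \mathbf{d}_\eta(\alpha).
\end{equation*}
Setting $d := D(\eta) \in \C$ completes the proof.

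The only potential subtlety is verifying that the two cut tangles $T_\alpha$ and $T_\eta$ really are $(1,1)$-tangles allowed by the definition of the invariant—both of their open strands are colored by Verma modules $V_\gamma$ with $\gamma \in (\C\setminus \Z)\cup r\Z$—and that $H_{\alpha,\eta}$ lies in $\mathfrak{L}$. Both checks are immediate from $\eta \in \C \setminus \Z$. There is no substantive obstacle here: the entire content of the lemma is that evaluation of $D$ at any single point $\eta \in \C \setminus \Z$ already determines $D$ on all of $(\C\setminus \Z)\cup r\Z$, and this rigidity is forced by the Hopf link alone.
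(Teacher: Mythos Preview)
Your proof is correct and follows essentially the same approach as the paper: apply well-definedness to the Hopf link colored by $V_\alpha$ and $V_\eta$, obtain $D(\alpha)S'(\eta,\alpha)=D(\eta)S'(\alpha,\eta)$, divide by the nonzero $S'(\eta,\alpha)$, and set $d=D(\eta)$. Your write-up simply makes explicit the checks (membership in $\mathfrak{L}$, eligibility of both cuts, nonvanishing of $S'(\eta,\alpha)$) that the paper leaves implicit.
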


\begin{proof}
	Let $L$ be a Hopf link with strands colored by $V_\alpha$ and $V_{\eta}$ for some $\alpha \in \mathbb{C} \setminus \Z \cup r\Z$ and $\eta \in \C \setminus \Z$. The assumption of well-definedness of the invariant implies that $D(\alpha) S'(\eta, \alpha) = D(\eta) S'(\alpha, \eta)$, whence 
	$D(\alpha) = D(\eta) \mathbf{d}_\eta(\alpha)$. Taking $d = D(\eta)$ proves the lemma. 
\end{proof}

It follows from \cref{multiples} that renormalized invariants arising from a modified quantum dimension $\mathbf{d}_\eta$ are effectively the only invariants that incorporate the cutting process described above. When $r=2$, \cref{multiples} can be strengthened. By \cref{etaetaprime}, any function $D$ that gives an invariant incorporating the cutting procedure is equal to $\pm \mathbf{d}_\eta$ for some $\eta \in \mathbb{C}$, that is, the scalar $d$ in \cref{multiples} can be taken to be $\pm 1$.

\begin{remark}
    By \cref{lemmasf}, the simple modules $S^{lr}_n$ have non-zero quantum dimension and so, by \cref{ambiqd}, are ambidextrous. In view of this, there is a modification of \cref{invaraiant} in which the standard Reshetikhin--Turaev invariant is renormalized with respect to $S^{lr}_n$ instead of $V_{\eta}$. The resulting invariant $F'_{n,lr}$ is defined on $\mathfrak{L}$ as well as those knots $\tilde{\mathfrak{L}}$ colored by at least one simple module of the form $S_{m}^{k r}$. A calculation as in \cref{hopfLink} shows that the $S^{lr}_n$-renormalized quantum dimension $\mathbf{d}_n^{lr}$ vanishes on  $V_\alpha$, $\alpha \in \mathbb{C} \setminus \mathbb{Z} \cup r \mathbb{Z}$. In particular, $F'_{n,lr}$ is zero on $\mathfrak{L}$. On the other hand, a direct calculation gives
    \[
    \qdim_\mathcal{C}(S^{kr}_m) = \qdim_\mathcal{C}(S^{lr}_n) \cdot \mathbf{d}_n^{lr}(S^{kr}_m).
    \]    
    It follows that the restriction of $F'_{n,lr}$ to $\tilde{\mathfrak{L}}$ is equal to $\qdim_\mathcal{C}(S^{lr}_n)^{-1}$ times the standard Reshetikhin--Turaev invariant $F$. From this perspective, the renormalized theory recovers the standard theory.
\end{remark}

Next, we describe the behavior of renormalized invariants under connect sum.

\begin{proposition}
    Let $L, L' \in \mathcal{L}$, each of which have at least one strand colored by $V_\alpha$ for some $\alpha \in \C \setminus \Z \cup r \Z$. Then the $\eta$-renormalized Reshetikhin--Turaev invariant of the connect sum $L\#L'$ along strands colored by $V_\alpha$ satisfies
    $$\mathbf{d}_\eta(\alpha) F^{\prime}_{\eta}(L\#L^{\prime})  =  F^{\prime}_{\eta}(L) \cdot  F^{\prime}_{\eta}(L^{\prime}) .$$
\end{proposition}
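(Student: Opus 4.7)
The plan is to exploit the fact that, up to isotopy, a connect sum along two $V_\alpha$-colored strands can be realized at the level of $(1,1)$-tangles as a vertical stacking, and then to use simplicity of $V_\alpha$ to turn composition of endomorphisms into multiplication of scalars.

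First I would choose, for each of $L$ and $L'$, a regular diagram in which the $V_\alpha$-colored strand along which the connect sum is performed appears as a straight vertical arc on one side of the diagram. Cutting these arcs produces $(1,1)$-tangles $T$ and $T'$, each with open strand colored by $V_\alpha$, whose closures are $L$ and $L'$ respectively. By the definition of connect sum, the $(1,1)$-tangle $T \circ T'$ obtained by stacking $T$ on top of $T'$ in $\mathbf{Rib}_\mathcal{C}$ has closure isotopic to $L \# L'$. Well-definedness of $F'_\eta$ (\cref{invaraiant}) lets us use this particular $(1,1)$-tangle to compute $F'_\eta(L \# L')$.

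Next, since $F$ is a ribbon functor (\cref{reshturfunctor}), I would write
\[
F(T \circ T') = F(T) \circ F(T')
\]
as endomorphisms of $V_\alpha$. The module $V_\alpha$ is simple (\cref{simplemodules}), so by Schur's lemma $\End_{\mathcal{C}}(V_\alpha) \simeq \mathbb{C}$, and consequently
\[
\langle F(T) \circ F(T') \rangle = \langle F(T) \rangle \cdot \langle F(T') \rangle.
\]
Applying the definition $F'_\eta(-) = \mathbf{d}_\eta(\alpha) \langle F(-) \rangle$ three times then gives
\[
\mathbf{d}_\eta(\alpha) F'_\eta(L \# L') = \mathbf{d}_\eta(\alpha)^2 \langle F(T) \rangle \langle F(T') \rangle = F'_\eta(L) \cdot F'_\eta(L'),
\]
which is the stated identity.

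There is no substantive obstacle: the only point requiring any care is ensuring that the stacking description of the connect sum is realizable at the level of $(1,1)$-tangles with matching color $V_\alpha$, which is guaranteed by the hypothesis that both $L$ and $L'$ contain such a strand. The factor $\mathbf{d}_\eta(\alpha)$ on the left is precisely the price of having used the invariant $F'_\eta$, rather than the underlying scalars $\langle F(T) \rangle$, to record the contribution of each factor.
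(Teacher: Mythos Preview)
Your proposal is correct and follows essentially the same approach as the paper: realize the connect sum as the closure of a composition of $(1,1)$-tangles with open strand colored by $V_\alpha$, use simplicity of $V_\alpha$ to factor the scalar $\langle F(T \circ T') \rangle$ as $\langle F(T) \rangle \cdot \langle F(T') \rangle$, and then unwind the definition of $F'_\eta$. The only cosmetic difference is that the paper first presents the connect sum via a $(2,0)$-tangle stacked on a $(0,2)$-tangle before converting to stacked $(1,1)$-tangles, whereas you go directly to $(1,1)$-tangles; the underlying argument is the same.
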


\begin{proof}
Consider a knot diagram for $L \# L^{\prime}$ of the form
\[
\begin{tikzpicture}[anchorbase,scale=1.0]
			\draw [very thick](-.25,.25) -- (-.25,.75) -- (.25,.75) -- (.25,.25) -- cycle;
			\draw [very thick](-.25,-.25) -- (-.25,-.75) -- (.25,-.75) -- (.25,-.25) -- cycle;
			\draw[very thick,->](-.1,.25) -- (-.1,-.25);
			\draw[very thick,<-](.1,.25) -- (.1,-.25);
	\node at (0,.5) {\tiny $U$};
    \node at (0,-.5) {\tiny $U'$};
    \node at (-.3,0) {\tiny $\alpha$};
    \node at (.3,0) {\tiny $\alpha$};
	\end{tikzpicture}
	\]
	where the obvious closures of the $(2,0)$-tangle $U$ and $(0,2)$-tangle $U^{\prime}$ are knot diagrams for $L$ and $L^{\prime}$, respectively, and we have written $\alpha$ for the color $V_{\alpha}$. Cutting this diagram along one of the connecting strands gives
	\[
	F^{\prime}_{\eta}(L \# L^{\prime})
	=
\mathbf{d}_\eta(\alpha)\left \langle F \left (
        \begin{tikzpicture}[anchorbase,scale=0.8]
			\draw [very thick](-.25,.25) -- (-.25,.75) -- (.25,.75) -- (.25,.25) -- cycle;
			\draw [very thick](-.25,-.25) -- (-.25,-.75) -- (.25,-.75) -- (.25,-.25) -- cycle;
			\draw[very thick,->](0,.75) -- (0,1.2);
			\draw[very thick,<-](0,.25) -- (0,-.25);
			\draw[very thick,<-](0,-.75) -- (0,-1.2);
	\node at (0,.5) {\tiny $U$};
    \node at (0,-.5) {\tiny $U'$};
    \node at (-.33,1) {\tiny $\alpha$};
    \node at (-.35,0) {\tiny $\alpha$};
    \node at (-.35,-1) {\tiny $\alpha$};
	\end{tikzpicture} \right ) \right \rangle
    \]
    where, by a slight abuse of notation, we now denote by $U$ the $(1,1)$-tangle obtained from $U$ by pulling one open strand to the top, and similarly for $U^{\prime}$. Since $V_\alpha$ is simple, the right hand side of this equation is equal to \[
    \mathbf{d}_\eta(\alpha)
    \left \langle F \left (
        \begin{tikzpicture}[anchorbase,scale=0.8]
			\draw [very thick](-.25,.25) -- (-.25,.75) -- (.25,.75) -- (.25,.25) -- cycle;
			\draw[very thick,->](0,.75) -- (0,1.2);
			\draw[very thick,<-](0,.25) -- (0,-.25);
	\node at (0,.5) {\tiny $U$};
    \node at (-.35,1) {\tiny $\alpha$};
    \node at (-.35,0) {\tiny $\alpha$};
	\end{tikzpicture} \right ) \right \rangle
\left \langle F \left (
        \begin{tikzpicture}[anchorbase,scale=0.8]
			\draw [very thick](-.25,.25) -- (-.25,.75) -- (.25,.75) -- (.25,.25) -- cycle;
			\draw[very thick,->](0,.75) -- (0,1.2);
			\draw[very thick,<-](0,.25) -- (0,-.25);
	\node at (0,.5) {\tiny $U^{\prime}$};
    \node at (-.35,1) {\tiny $\alpha$};
    \node at (-.35,0) {\tiny $\alpha$};
	\end{tikzpicture} \right ) \right \rangle
    =
    \mathbf{d}_\eta(\alpha)^{-1} F^{\prime}_{\eta}(L) F^{\prime}_{\eta}(L^{\prime}).
    \]
    This gives the desired expression for $F^{\prime}_{\eta}(L \# L^{\prime})$.
\end{proof}
	
\begin{example}
	Let $r=2$. Set $V=V_{\alpha}$ for some $\alpha \in \C$. Consider $V \otimes V$ with basis $\{v_0 \otimes v_0, v_0 \otimes v_1, v_1 \otimes v_0, v_1 \otimes v_1\}$. A direct computation gives
    \[
    c_{V,V} = \left( 
    \begin{smallmatrix}
        q^{\frac{(\alpha+1)^2}{2}} & 0 & 0 & 0 \\
        0 & 0 & q^{\frac{(\alpha+1)(\alpha-1)}{2}} & 0 \\
        0 & q^{\frac{(\alpha+1)(\alpha-1)}{2}} & q^{\frac{(\alpha+1)(\alpha-1)}{2}}\{1-\alpha\} & 0 \\
        0 & 0 & 0 & q^{\frac{(\alpha-1)^2}{2}}
    \end{smallmatrix}
    \right).
    \]
    For example, the $(3,2)$ and $(3,3)$ entries of $c_{V,V}$ are the coefficients of $c_{V,V}(v_1 \otimes v_0)$:
    \begin{eqnarray*}
        c_{V,V}(v_1 \otimes v_0)
        &=&
        \tau (q^{H\otimes H/2} (1 +\frac{\{1\}^2}{\{1\}}E \otimes F) (v_1 \otimes v_0) \\
        &=&
        \tau (q^{H\otimes H/2} (v_1 \otimes v_0 +\frac{\{1\}^2}{\{1\}}Ev_1 \otimes Fv_0) \\
        &=&
        q^{\frac{(\alpha-1)(\alpha+1)}{2}}v_0 \otimes v_1 +q^{\frac{(\alpha-1)(\alpha+1)}{2}}\{1-\alpha\} v_1 \otimes v_0.
    \end{eqnarray*}
    Using the explicit formula for $c_{V,V}$, we verify the equality
    \begin{equation} \label{alexSkein}
    q^{-\frac{(\alpha+1)(\alpha-1)}{2}}c_{V,V}- q^{\frac{(\alpha+1)(\alpha-1)}{2}}c_{V,V}^{-1} = \{\alpha+1\} \id_{V \otimes V}.
    \end{equation}
    Recall from the proof of \cref{ribbon theorem} that $\theta_V = q^{-\frac{(\alpha+1)(\alpha-1)}{2}} \id_V$. Define $\mathcal{F}^{\prime}(L)=q^{\frac{(\alpha+1)(\alpha-1)}{2} \writhe(L)}F^{\prime}(L)$, where $\writhe(L)$ is the writhe of $L$. Then $\mathcal{F}^{\prime}$ is an invariant of oriented links colored by $V_{\alpha}$. This is an instance of the deframing procedure, explained, for example, in \cite[\S 3.3]{jackson2019}. The relation \eqref{alexSkein} implies that $\mathcal{F}^{\prime}$ satisfies the Alexander skein relation with $t=q^{\alpha+1}$.
\end{example}

We end this section with some calculations of renormalized invariants for knots with few crossings.

\begin{example}\label{ex:renormalizedTrefoil}
Using the result of \cref{ex:cutTrefoil}, the renormalized invariant of the right-handed trefoil $K=3_1$ colored by a simple module $V_{\alpha}$ is 
\[
F^{\prime}_{\eta}(K)
=
\frac{\{\eta r\} \{\alpha\}}{\{\eta\}\{\alpha r\}} q^{\frac{3}{2}(\alpha+r-1)^2 + (\alpha+r-1)(1-r)} \sum_{i=0}^{r-1} q^{i(-3\alpha - r +i)} \prod_{j=0}^{i-1} \{i-j-\alpha\}.\qedhere
\]
\end{example}

\begin{example}
    Proceeding analogously to \cref{ex:renormalizedTrefoil}, the renormalized invariant of the figure eight knot $K=4_1$ colored by a simple module $V_\alpha$ is
    \begin{multline*}
    F^{\prime}_{\eta}(K)
    =
    \frac{\{\eta r\} \{\alpha\}}{\{\eta\}\{\alpha r\}} 
        \sum_{i=0}^{r-1} \sum_{j=0}^{i} \sum_{k=0}^{r-i} q^{(-\alpha + r - 1 -2i)(\alpha - r + 1)/2} q^{-(-\alpha + r - 1 -2(i-j))(i + r-1-j)} \cdot \\
         q^{-(-\alpha + r - 1 - 2(i-j +k))(\alpha + r - 1 -2(i+k))/2} q^{j(j-1)/2} q^{k(k-1)/2} q^{(i + k - r + 1)(i + k - r)/2} \cdot \\
        \prod_{x = r-1}^{r-1-j}  \frac{\{x\} \{x- \alpha\}}{\{j\}!} \prod_{y = i-j}^{i-j+k}  \frac{\{y\} \{y + \alpha\}}{\{k\}!} ) \prod_{z = i+k}^{r-1} \frac{\{z\} \{z - \alpha\}}{\{l\}!}  . \qedhere
    \end{multline*}
\end{example}

\section{Further reading}\label{sec:furtherReading}

In this final section, we give a brief sample of recent developments in renormalized Reshetikhin--Turaev theory.

The construction of renormalized Reshetikhin--Turaev invariants of links, given in \cref{invariant} for the category of weight modules over $\sunrolled$, applies more generally to certain non-semisimple ribbon categories \cite{geer_2009}. The key new notions in this more general context are \emph{modified traces} (and so modified quantum dimensions) and ambidextrous objects \cite{GKP1,GKP2,GKP3,geer_2017,fontalvo2018,gainutdinov2020,beliakova2021}. Renormalized Reshetikhin--Turaev invariants of links have been studied for categories of weight modules over unrolled quantum groups of complex simple Lie algebras \cite{geer2013} and Lie superalgebras \cite{geer2007b,geer2010}. The renormalized invariants are motivated by and recover previous \emph{ad hoc} renormalized invariants \cite{kauffman1991,akutsu1992,viro2006,geer2010}.

The extension of the renormalized theory from links to $3$-manifolds was achieved in \cite{costantino2014quantum} for the category of weight modules over $\sunrolled$, where properties \ref{ite:nonss} and \ref{ite:infSimp} are serious obstructions. In general, the additional data and constraints on the input ribbon category required to obtain a $3$-manifold invariant are termed \emph{non-degenerate relative pre-modularity}. The resulting $3$-manifold invariants have novel properties, including the ability to distinguish homotopy classes of lens spaces and connections with the Volume Conjecture, and are related to earlier non-semisimple $3$-manifold invariants, including those of Hennings \cite{hennings1996} and Kerler--Lyubashenko \cite{kerler2001}. See \cite{derenzi2018,derenzi2023}. Further examples of $3$-manifold invariants associated to non-degenerate relative pre-modular categories are studied in \cite{AGP, beliakova2021b, ha2018,bao2022}. For connections between standard and renormalized Reshetikhin--Turaev invariants of links and 3-manifolds, see \cite{costantino2015b,costantino2021,derenzi2020,mori2022}. 

The further extension of renormalized $3$-manifold invariants to three dimensional topological quantum field theories (TQFTs) was first accomplished in the case of weight modules over $\sunrolled$ \cite{BCGP}. The TQFTs have interesting features, including the extension of Reidemeister torsion to a TQFT and the possibility of producing representations of mapping class groups that are faithful modulo their centers. A general framework for the construction of (extended) TQFTs from \emph{relative modular categories} was given by De Renzi \cite{derenzi2022}, generalizing the so-called universal construction of semisimple TQFTs \cite{blanchet1995}. See \cite{blanchet2021} for an overview of this circle of ideas. Further examples of TQFTs from renormalized invariants are constructed and studied in \cite{derenzi2022b,geerYoung2022}.

At present, the main source of relative modular categories is the representation theory of unrolled quantum groups, thereby making this class of quantum groups central to non-semisimple topology. Motivated by the success of rational conformal field theoretic techniques in semisimple topology, a number of authors have pursued a conjectural logarithmic variant of the Kazhdan--Lusztig correspondence, which asserts an equivalence between categories of weight modules over unrolled quantum groups and modules over non-rational, or logarithmic, vertex operator algebras. Much progress has been made in the case of $\sunrolled$, where connections between weight modules and the singlet, triplet and Feigin--Tipunin algebras have been found \cite{creutzig2018,creutzig2022}. 

Finally, there has been exciting progress in connecting non-semisimple mathematical TQFTs to physical quantum field theories. This can be seen as a non-semisimple generalization of the celebrated connection between compact Chern--Simons theory and Reshetikhin--Turaev TQFTs \cite{witten1989,reshetikhin1991invariants}. The case of TQFTs arising from $\sunrolled$ is studied in \cite{creutzig2021}, where it is connected to a topological twist of $3$d $\mathcal{N}=4$ Chern--Simons matter theory with gauge group $SU(2)$. Similarly, TQFTs arising from an unrolled quantization of the Lie superalgebra $\mathfrak{gl}(1 \vert 1)$ were shown in \cite{geerYoung2022} to be related to supergroup Chern--Simons theories with gauge group $\mathfrak{psl}(1 \vert 1)$ and $U(1 \vert 1)$. A key feature in both physical realizations is the presence of global symmetry groups, allowing the quantum field theories to be coupled to background flat connections. Further physical studies of such quantum field theories can be found in \cite{gukov2021,jagadale2022}. In condensed matter physics, Levin and Wen used unitary spherical fusion categories to give a mathematical foundation of topological order and string-net condensation \cite{levin2005}. Recently, this construction was extended to the setting of the non-semisimple category of weight modules over $\sunrolled$ \cite{geer2022,geer2022b}.

\newcommand{\etalchar}[1]{$^{#1}$}

\end{document}